\newtheorem{theorem}{Theorem}[section]
\newtheorem{corollary}{Corollary}
\newtheorem{lemma}[theorem]{Lemma}
\theoremstyle{definition}
\newtheorem{definition}[theorem]{Definition}
\newtheorem{remark}{Remark}
\begin{document}

\title[Isospectral Graph Reductions and Improved Estimates]{Isospectral Graph Reductions and Improved Estimates of Matrices' Spectra}

\author{L. A. Bunimovich}
\address{ABC Math Program and School of Mathematics, Georgia Institute of Technology, 686 Cherry Street, Atlanta, GA 30332}
\email{bunimovich@math.gatech.edu}

\author{B. Z. Webb}
\address{Brigham Young University, Department of Mathematics, Provo, UT 84602}
\email{bwebb@math.byu.edu\\
Telephone: 801-422-2336}

\keywords{Isospectral Graph Reduction, Eigenvalue Approximation, Gershgorin Discs, Dynamical Networks}

\subjclass[2000]{Primary: 15A42; Secondaries: 05C50, 82C20.}

\maketitle


\begin{abstract}
Via the process of isospectral graph reduction the adjacency matrix of a graph can be reduced to a smaller matrix while its spectrum is preserved up to some known set. It is then possible to estimate the spectrum of the original matrix by considering Gershgorin-type estimates associated with the reduced matrix. The main result of this paper is that eigenvalue estimates associated with Gershgorin, Brauer, Brualdi, and Varga improve as the matrix size is reduced. Moreover, given that such estimates improve with each successive reduction, it is also possible to estimate the eigenvalues of a matrix with increasing accuracy by repeated use of this process.
\end{abstract}

\section{Introduction}
A remarkable theorem due to Gershgorin \cite{Gershgorin31} states that if the matrix $A\in\mathbb{C}^{n\times n}$ then the eigenvalues of $A$ are contained in the union of the $n$ discs $$\bigcup_{i=1}^n\{\lambda\in\mathbb{C}:|\lambda-A_{ii}|\leq\sum_{j=1, j\neq i}^n|A_{ij}|\}.$$ This simple and geometrically intuitive result moreover implies a nonsingularity result for diagonally dominant matrices (see theorem 1.4 in \cite{Varga09}), which can be traced back to earlier work done by L\'{e}vy, Desplanques, Minkowski, and Hadamard \cite{Levy81,Desplanques87,Minkowski00,Hadamard03}. More recently, this result of Gershgorin has been improved by Brauer and Varga \cite{Brauer47,Varga09} whose results are similar in spirit to Gershgorin's in that each assigns to every matrix $A\in\mathbb{C}^{n\times n}$ a region of the complex plane containing the matrix's eigenvalues. Moreover, the same holds for a result of Brualdi \cite{Brualdi82} with the exception that the associated region is define for a proper subset of $\mathbb{C}^{n\times n}$.

These improvements can be summarized as follows. If $A\in\mathbb{C}^{n\times n}$ let $\Gamma(A)$, $\mathcal{K}(A)$, and $B(A)$ denote the associated regions given respectively by Gershgorin, Brauer, and the improvement of Brualdi's theorem given by Varga. If $\sigma(A)$ denotes the eigenvalues of $A$ then it is known that $\sigma(A)\subseteq B(A)\subseteq\mathcal{K}(A)\subseteq\Gamma(A)$ for any complex valued matrix $A$ (see \cite{Varga09} for details). Furthermore, if the region $br(A)$ associated with Brualdi's original result is defined then it follows that $\sigma(A)\subseteq B(A)\subseteq br(A)\subseteq \mathcal{K}(A)$.

In this paper, our goal is to improve each of the estimates of Gershgorin, Brauer, Brualdi, and Varga by considering reductions in the structure of the weighted digraphs associated to each matrix $A\in\mathbb{C}^{n\times n}$. To do so we first extend these classical results to a larger class of square matrices with entries in the set $\mathbb{W}$ consisting of complex rational functions. The motivation for considering the class of matrices with entries in $\mathbb{W}$ arises from the following.

In the study of dynamical networks, in which networks are typically described by large and often complex graphs of interactions, it has been found that an important characteristic of a network is the spectrum of the network's adjacency matrix \cite{Blank06,Newman06,Afriamovich07,Motter07}. Using the theory developed in \cite{BW09} it is possible to reduce the graph $G$ associated to some network to another smaller graph $\mathcal{R}$. We refer to this reduction process as an \textit{isospectral graph reduction}, or simply a graph reduction, of $G$.

The main result of \cite{BW09} is that the eigenvalues of the adjacency matrix $M(G)$ of $G$ and the adjacency matrix $M(\mathcal{R})$ of $\mathcal{R}$ differ at most by some set, which is known in advance. What is novel about this process is that it equivalently allows for the reduction of an arbitrary matrix $A\in\mathbb{C}^{n\times n}$ to a smaller matrix $R\in\mathbb{W}^{m\times m}$ ($m<n$) such that the eigenvalues of $A$ and $R$ differ by at most some set, known in advance.

In the present paper we show that by using such graph reductions (equivalently matrix reductions) one can improve Gershgorin, Brauer, and Brualdi-type estimates of the spectra of matrices in $\mathbb{C}^{n\times n}$. Specifically, for $M(G)\in\mathbb{C}^{n\times n}$ the regions in the complex plane for both Gershgorin and Brauer estimates of the eigenvalues of $M(G)$ shrink as the graph $G$ is reduced (see theorems \ref{impgersh} and \ref{impbrauer} for exact statements). For the estimates associated with Brualdi and Varga we give a sufficient condition under which such estimates also improve as the underlying graph is reduced (see theorems \ref{impbrualdi} and \ref{bru}).

We also note that, for a given graph (equivalently matrix), many graph reductions are typically possible. Hence, this process is quite flexible. Moreover, as it is possible to sequentially reduce a graph $G$, graph reductions on $G$ can be used to estimate the spectrum of $M(G)$ with increasing accuracy depending on the extent to which $G$ is reduced. In particular, if $G$ is reduced as much as possible the corresponding Gershgorin region is a finite set of points in the complex plane that differs from the actual spectrum of $M(G)$ by a uniquely defined set of points.

This paper is organized as follows. Section 2 introduces the formal definitions used in this paper. Section 3 extends the results of Gershgorin, Brauer, Brualdi, and Varga to the class of matrices with entries in $\mathbb{W}$. Section 4 then summarizes and expands the theory of isospectral graph reductions developed in \cite{BW09} which will be used to improve the eigenvalue estimates of section 3. Section 5 contains the main results of this paper demonstrating that the procedure of isospectral graph reduction gives better estimates of the spectra of matrices than the aforementioned methods. Section 6 gives some natural applications of the theorems of section 5. These include estimating the spectrum of a Laplacian matrix of graph, estimating the spectral radius of a matrix, and determining useful reductions to use for a given matrix (or equivalently, graph of a network).

\section{Preliminaries}
In this paper we consider two equivalent mathematical objects. The first is the set of graphs consisting of all finite weighted digraphs with or without loops having no parallel edges and edge weights in the set $\mathbb{W}$ of complex rational functions (described below). We denote this class of graphs by $\mathbb{G}$ where $\mathbb{G}^n$ is the set of graphs in $\mathbb{G}$ having $n$ vertices. The second set of objects we consider are the weighted adjacency matrices associated with the graphs in $\mathbb{G}$. That is, the class of matrices $\mathbb{W}^{n\times n}$ for all $n\geq 1$.

By way of notation we let the weighted digraph $G\in\mathbb{G}$ be the triple $(V,E,\omega)$ where $V$ and $E$ are the finite sets denoting the \textit{vertices} and \textit{edges} of $G$ respectively, the edges corresponding to ordered pairs $(v,w)$ for $v,w\in V$. Furthermore, $\omega:E\rightarrow\mathbb{W}$ where $\omega(e)$ is the \textit{weight} of the edge $e$ for $e\in E$. We will use the convention that $\omega(e)=0$ if and only if $e\notin E$.

For convenience, any graph that is denoted by some triple, e.g. $G=(V,E,\omega)$, will be assumed to be in $\mathbb{G}$. Moreover, if the vertex set of the graph $G=(V,E,\omega)$ is labeled $V=\{v_1,\dots,v_n\}$ then we denote the edge $(v_i,v_j)$ by $e_{ij}$. For convenience, in the remainder of this paper if $G=(V,E,\omega)$ is a graph in $\mathbb{G}^n$ then we will assume that its vertex set has some labeling $V=\{v_1,\dots,v_n\}$.

In order to describe the set of weights $\mathbb{W}$ let $\mathbb{C}[\lambda]$ denote the set of polynomials in the single complex variable $\lambda$ with complex coefficients. We define the set $\mathbb{W}$ to be the set of rational functions of the form $p/q$ where $p,q\in\mathbb{C}[\lambda]$ such that $p$ and $q$ have no common factors and $q$ is nonzero.

The set $\mathbb{W}$ is then a field under addition and multiplication with the convention that common factors are removed when two elements are combined. That is, if $p/q,r/s\in\mathbb{W}$ then $p/q+r/s=(ps+rq)/qs$ where the common factors of $ps+rq$ and $qs$ are removed. Similarly, in the product $pr/qs$ of $p/q$ and $r/s$ the common factors of $pr$ and $qs$ are removed.

In order to stress the generality of considering the set $\mathbb{G}$ we note that graphs which are either undirected, unweighted or have parallel edges, can be considered to be graphs in $\mathbb{G}$. This is done by making an undirected graph $G$ into a directed graph by orienting each of its edges in both directions. In the case that $G$ is unweighted, $G$ can weighted by giving each edge unit weight. Also multiple edges between two vertices of $G$ may be considered a single edge by adding the weights of those multiple edges and setting this to be the weight of this single equivalent edge.

To introduce the spectrum associated to a graph $G\in\mathbb{G}$ we will use the following notation. If $G=(V,E,\omega)$ then the matrix $M(G)=M(G,\lambda)$ defined entrywise by $$M(G)_{ij}=\omega(e_{ij})$$
is the \textit{weighted adjacency matrix} of $G$.

We let the \textit{spectrum} or \emph{eigenvalues} of a matrix $M=M(\lambda)\in\mathbb{W}^{n\times n}$ be the set
\begin{equation}\label{eq1}
\{\lambda\in\mathbb{C}:\det(M(\lambda)-\lambda I)=0\}
\end{equation}
where this set includes multiplicities. More specifically, as $$\det(M(\lambda)-\lambda I)=p/q\in\mathbb{W}$$ then the spectrum of $M$ is the solutions to $p=0$.

For the graph $G$ we let $\sigma(G)$ denote the spectrum of $M(G)$. The spectrum of a matrix with entries in $\mathbb{W}$ is therefore a generalization of the spectrum of a matrix with complex entries.

As we are mainly concerned with the properties of the adjacency matrix of graphs in $\mathbb{G}$ we note that there is a one-to-one correspondence between the graphs in $\mathbb{G}^n$ and the matrices $\mathbb{W}^{n\times n}$. Therefore, we may talk of a graph $G\in\mathbb{G}^n$ associated with a matrix $M=M(G)$ in $\mathbb{W}^{n\times n}$ and vice-versa without ambiguity.

\section{Spectra Estimation of Graphs in $\mathbb{G}$.}
Here we extend the classical results of Gershgorin, Brauer, Brualdi, and the more recent work of Varga to matrices in $\mathbb{W}^{n\times n}$ (see for instance \cite{Varga09}). To do so we will first define the notion of a \textit{polynomial extension} of a graph $G\in\mathbb{G}$.

\begin{definition}
If $G\in\mathbb{G}^n$ and $M(G)_{ij}=p_{ij}/q_{ij}$ where $p_{ij},q_{ij}\in\mathbb{C}[\lambda]$ let $L_i(G)$=$\prod_{j=1}^{n}q_{ij}$ for $1\leq i\leq n$. We call the graph $\bar{G}$ with adjacency matrix
$$M(\bar{G})_{ij}=\begin{cases}
                         L_i(G)M(G)_{ij} \hspace{.85in} i\neq j\\
                         L_i(G)\big(M(G)_{ij}-\lambda\big)+\lambda \hspace{0.2in} i=j
                         \end{cases}, \ \ 1\leq i,j\leq n
$$ the \textit{polynomial extension} of $G$.
\end{definition}

\noindent To justify this name note that each $M(\bar{G})_{ij}$ is an element of $\mathbb{C}[\lambda]$ or $M(\bar{G})$ has complex polynomial entries. Moreover, we have the following result.

\begin{lemma}\label{lemma1}
If $G\in\mathbb{G}$ then $\sigma(G)\subseteq\sigma(\bar{G})$.
\end{lemma}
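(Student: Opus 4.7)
The plan is to obtain a direct determinantal identity relating $M(\bar{G},\lambda) - \lambda I$ and $M(G,\lambda) - \lambda I$ by pulling a common factor out of each row, and then read the spectral inclusion off that identity.

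First, I would compute the entries of $M(\bar{G},\lambda) - \lambda I$ from the definition. For $i \neq j$ the $(i,j)$-entry is $L_i(G,\lambda) M(G,\lambda)_{ij}$. The key observation is what happens on the diagonal: the two stray $\lambda$-terms in the definition of $M(\bar{G},\lambda)_{ii}$ are tailored so that after subtracting $\lambda$ (from $-\lambda I$) one gets
$$M(\bar{G},\lambda)_{ii} - \lambda = L_i(G,\lambda)M(G,\lambda)_{ii} - L_i(G,\lambda)\lambda + \lambda - \lambda = L_i(G,\lambda)\bigl(M(G,\lambda)_{ii} - \lambda\bigr).$$
Thus every entry of row $i$ of $M(\bar{G},\lambda) - \lambda I$ is obtained from the corresponding entry of $M(G,\lambda) - \lambda I$ by multiplication with $L_i(G,\lambda)$.

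Second, pulling these factors out row by row from the determinant yields, as an identity of rational functions in $\lambda$,
$$\det\bigl(M(\bar{G},\lambda) - \lambda I\bigr) = \left(\prod_{i=1}^{n} L_i(G,\lambda)\right) \det\bigl(M(G,\lambda) - \lambda I\bigr).$$
From this the inclusion is immediate: if $\lambda_0 \in \sigma(G)$ with multiplicity $m$, i.e.\ $\lambda_0$ is a zero of multiplicity $m$ of $\det(M(G,\lambda) - \lambda I)$, then since each $L_i(G,\lambda)$ lies in $\mathbb{C}[\lambda]$ the prefactor $\prod_i L_i(G,\lambda)$ can only raise the order of vanishing at $\lambda_0$, never lower it. Hence $\lambda_0$ is a zero of $\det(M(\bar{G},\lambda) - \lambda I)$ of multiplicity at least $m$, which is exactly the statement $\sigma(G) \subseteq \sigma(\bar{G})$ as lists with multiplicities.

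There is no real obstacle. The one point that warrants a line of justification is that the determinant identity is interpreted in the field $\mathbb{W}$ of rational functions (so row multilinearity applies verbatim), and that because the $L_i(G,\lambda)$ are polynomials, no cancellation of zeros of $\det(M(G,\lambda) - \lambda I)$ can occur when we pass to $\det(M(\bar{G},\lambda) - \lambda I)$.
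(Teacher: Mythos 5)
Your proposal is correct and follows essentially the same route as the paper: observe that the stray $\lambda$-terms in the definition of $M(\bar{G},\lambda)_{ii}$ cancel so that $M(\bar{G},\lambda)-\lambda I$ is $M(G,\lambda)-\lambda I$ with row $i$ multiplied by $L_i(G,\lambda)$, and then read the inclusion off the determinant identity $\det(M(\bar{G},\lambda)-\lambda I)=\bigl(\prod_i L_i(G,\lambda)\bigr)\det(M(G,\lambda)-\lambda I)$. Your added remark that the polynomial prefactor can only raise, never lower, the order of vanishing is a worthwhile clarification of why the inclusion holds with multiplicities, a point the paper leaves implicit.
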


\begin{proof}
For $G\in\mathbb{G}^n$ note that the matrix $M(\bar{G})-\lambda I$ is given by
$$(M(\bar{G})-\lambda I)_{ij}=\begin{cases}
                         L_i(G)M(G)_{ij} \hspace{0.6in} i\neq j\\
                         L_i(G)\big(M(G)_{ij}-\lambda\big) \hspace{0.2in} i=j
                         \end{cases} \text{for} \ 1\leq i\leq n.
$$ The matrix $M(\bar{G})-\lambda I$ is then the matrix $M(G)-\lambda I$ whose $i$th row has been multiplied by $L_i(G)$. Therefore,
$$\det\big(M(\bar{G})-\lambda I\big)=\Big(\prod_{i=1}^n L_i(G)\Big)\det\big(M(G)-\lambda I\big)$$ implying $\sigma(G)\subseteq\sigma(\bar{G})$.
\end{proof}

\subsection{Gershgorin-Type Regions}
As previously mentioned, a theorem of Gershgorin's, originating from \cite{Gershgorin31}, gives a simple method for bounding the eigenvalues of a square matrix with complex valued entries. This result is the following theorem which we formulate after introducing some notation.

If $A\in\mathbb{C}^{n\times n}$ let
\begin{equation}\label{eq-1}
r_i(A)=\sum_{j=1, \ j \neq i}^n|A_{ij}|, \ \ \ \ \ \ 1\leq i \leq n
\end{equation} be the \textit{ith row sum} of $A$.

\begin{theorem}{\textbf{(Gershgorin \cite{Gershgorin31})}}\label{theorem1}
Let $A\in\mathbb{C}^{n\times n}$. Then all eigenvalues of $A$ are contained in the set
$$\Gamma(A)=\bigcup^n_{i=1}\{\lambda\in \mathbb{C}:|\lambda-A_{ii}|\leq r_i(A)\}.$$
\end{theorem}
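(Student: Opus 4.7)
The plan is to give the classical argument based on choosing an index where the eigenvector attains its maximum modulus. The statement concerns an ordinary complex matrix (not one with entries in $\mathbb{W}$), so none of the $\lambda$-dependent machinery from the previous lemma is needed here; it is a pure finite-dimensional linear algebra argument.

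First I would fix an eigenvalue $\lambda \in \mathbb{C}$ of $A$ and an associated eigenvector $x = (x_1,\dots,x_n)^T \neq 0$, so that $Ax = \lambda x$. The key step is to select an index $i \in \{1,\dots,n\}$ with $|x_i| = \max_{1 \leq j \leq n} |x_j|$; since $x \neq 0$ this maximum is strictly positive. Writing out the $i$th coordinate of the eigenvalue equation gives
\[
\sum_{j=1}^n A_{ij} x_j = \lambda x_i,
\]
which I would rearrange into
\[
(\lambda - A_{ii}) x_i = \sum_{j \neq i} A_{ij} x_j.
\]

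Next I would take absolute values on both sides, apply the triangle inequality, and use $|x_j| \leq |x_i|$ for every $j$, obtaining
\[
|\lambda - A_{ii}|\,|x_i| \;\leq\; \sum_{j \neq i} |A_{ij}|\,|x_j| \;\leq\; \Bigl(\sum_{j \neq i} |A_{ij}|\Bigr) |x_i| \;=\; r_i(A)\,|x_i|.
\]
Dividing through by $|x_i| > 0$ yields $|\lambda - A_{ii}| \leq r_i(A)$, so $\lambda$ lies in the $i$th Gershgorin disc and hence in $\Gamma(A)$. Since $\lambda$ was an arbitrary eigenvalue, the theorem follows.

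There is essentially no obstacle to this proof; the only subtlety worth flagging is the necessity of choosing the index $i$ so that $|x_i|$ is maximal, because without that choice the bound $|x_j| \leq |x_i|$ fails and the argument breaks down. Everything else is an application of the triangle inequality, so no auxiliary lemmas beyond standard properties of $|\cdot|$ on $\mathbb{C}$ are required.
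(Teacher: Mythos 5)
Your proof is correct and is the standard argument for Gershgorin's theorem: choosing the index at which the eigenvector attains its maximal modulus and applying the triangle inequality to the $i$th row of $Ax=\lambda x$. The paper itself states this theorem without proof, citing it as a classical result, so there is nothing to compare against; your write-up supplies exactly the canonical proof one finds in the references the paper cites.
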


In order to extend theorem \ref{theorem1} to the class of matrices $\mathbb{W}^{n\times n}$ we use the following adaptation of the notation given by (\ref{eq-1}). For $G\in\mathbb{G}^n$ let
$$r_i(G)=\sum_{j=1,j\neq i}^n|M(G)_{ij}| \ \text{for} \ 1\leq i\leq n$$
be the \textit{ith row sum} of $M(G)$.

Note that as $M(\bar{G})\in\mathbb{C}[\lambda]^{n\times n}$, for any $G\in\mathbb{G}$, we can view $M(\bar{G})=M(\bar{G},\lambda)$ as a function $M(\bar{G},\cdot):\mathbb{C}\rightarrow\mathbb{C}^{n\times n}$ and $M(\bar{G},\cdot)_{ij}:\mathbb{C}\rightarrow\mathbb{C}$. Likewise, we can consider $r_i(\bar{G})=r_i(\bar{G},\lambda)$ to be the function $r_i(\bar{G},\cdot):\mathbb{C}\rightarrow\mathbb{C}$. However, typically we will suppress the dependence of $M(\bar{G})$ and $r_i(\bar{G})$ on $\lambda$ for ease of notation.

\begin{theorem}\label{theorem2}
Let $G\in\mathbb{G}^n$. Then $\sigma(G)$ is contained in the set
$$\mathcal{BW}_\Gamma(G)=\bigcup_{i=1}^n\{\lambda\in\mathbb{C}:|\lambda-M(\bar{G})_{ii}|\leq r_i(\bar{G})\}.$$
\end{theorem}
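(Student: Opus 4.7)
The plan is to lift the estimate from $G$ to its polynomial extension $\bar{G}$ via Lemma~\ref{lemma1}, and then apply the classical Gershgorin theorem (Theorem~\ref{theorem1}) pointwise at each candidate eigenvalue. The trick is that once we evaluate the polynomial-valued matrix $M(\bar{G},\lambda)$ at a specific complex number $\lambda_{0}$ we obtain an honest complex matrix, and classical Gershgorin applies verbatim to that matrix. The awkwardness of $\mathcal{BW}_{\Gamma}(G)$ being described as a set of solutions of an inequality in which both the ``center'' $M(\bar{G},\lambda)_{ii}$ and the ``radius'' $r_{i}(\bar{G},\lambda)$ depend on $\lambda$ is exactly what this argument demands, not an obstacle.

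Concretely, I would fix $\lambda_{0}\in\sigma(G)$. By Lemma~\ref{lemma1}, $\lambda_{0}\in\sigma(\bar{G})$, which by the defining equation~(\ref{eq1}) for the spectrum means $\det\bigl(M(\bar{G},\lambda_{0})-\lambda_{0}I\bigr)=0$. Since every entry of $M(\bar{G},\lambda)$ lies in $\mathbb{C}[\lambda]$, the evaluation $M(\bar{G},\lambda_{0})$ is a matrix in $\mathbb{C}^{n\times n}$ and the vanishing of the determinant above is the statement that $\lambda_{0}$ is an eigenvalue of this complex matrix. Applying Theorem~\ref{theorem1} to $A=M(\bar{G},\lambda_{0})$ then yields an index $i$ with
\[
|\lambda_{0}-M(\bar{G},\lambda_{0})_{ii}|\;\le\;\sum_{j\neq i}|M(\bar{G},\lambda_{0})_{ij}|\;=\;r_{i}(\bar{G},\lambda_{0}),
\]
which is precisely the condition that $\lambda_{0}$ lies in the $i$th set making up $\mathcal{BW}_{\Gamma}(G)$.

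There is no real obstacle here. The only point worth double-checking is that the evaluation $M(\bar{G},\lambda_{0})$ is always well defined as a complex number; this is the whole purpose of passing from $G$ to $\bar{G}$, because multiplying the $i$th row of $M(G,\lambda)-\lambda I$ by $L_{i}(G,\lambda)=\prod_{j}q_{ij}$ clears all denominators without changing the zero set of the determinant (as recorded in the proof of Lemma~\ref{lemma1}). Consequently the row-evaluation and determinant manipulations above are legitimate for every $\lambda_{0}\in\mathbb{C}$, and the theorem follows.
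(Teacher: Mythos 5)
Your proof is correct and is essentially identical to the paper's: both fix $\alpha\in\sigma(G)$, use Lemma~\ref{lemma1} to conclude $\alpha$ is an eigenvalue of the complex matrix $M(\bar{G},\alpha)$, and then apply the classical Gershgorin theorem to that matrix. No issues.
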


\begin{proof}
First note that for $\alpha\in\sigma(G)$ the matrix $M(\bar{G},\alpha)\in\mathbb{C}^{n\times n}$. As Lemma \ref{lemma1} implies that $\alpha$ is an eigenvalue of the matrix $M(\bar{G},\alpha)$ then by an application of Gershgorin's theorem the inequality $|\alpha-M(\bar{G},\alpha)_{ii}|\leq r_i(\bar{G},\alpha)$ holds for some $1\leq i\leq n$. Hence, $\alpha\in\mathcal{BW}_\Gamma(G)$.
\end{proof}

Because it will be useful later in comparing different regions in the complex plane, for $G\in\mathbb{G}^n$ we denote
$$\mathcal{BW}_\Gamma(G)_i=\{\lambda\in\mathbb{C}:|\lambda-M(\bar{G})_{ii}|\leq r_i(\bar{G})\} \ \text{where} \ 1\leq i\leq n$$ and call this the \textit{$i$th Gershgorin-type region} of $G$. Similarly, we call the union $\mathcal{BW}_\Gamma(G)$ of these $n$ sets the \textit{Gershgorin-type region} of the graph $G$.

\begin{figure}
\begin{tabular}{ccc}
    \begin{overpic}[scale=.5,angle=270]{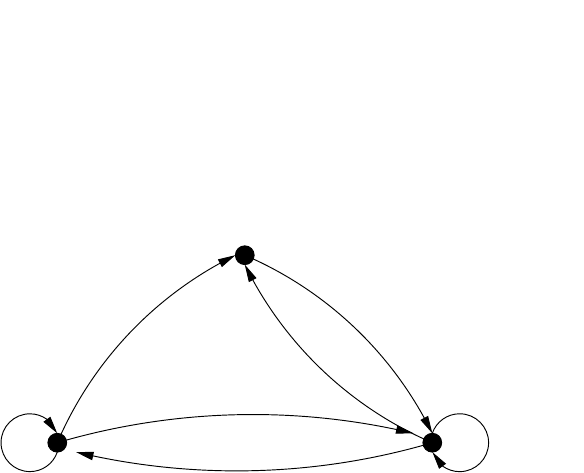}
    \put(0,5){$\mathcal{G}$}
    \put(-6,89){$v_1$}
    \put(43,56){$v_3$ }
    \put(-6,25){$v_2$}
    \put(14,93){$\frac{\lambda+1}{\lambda^2}$ }
    \put(11,57){$\frac{1}{\lambda}$}
    \put(-16,57){$\frac{2\lambda+1}{\lambda^2}$}
    \put(19,54){$\frac{1}{\lambda}$ }
    \put(12,19){$\frac{1}{\lambda}$}
    \put(28,35){$1$ }
    \put(24,80){$\frac{\lambda+1}{\lambda}$}
    \end{overpic} &
    \begin{overpic}[scale=.5]{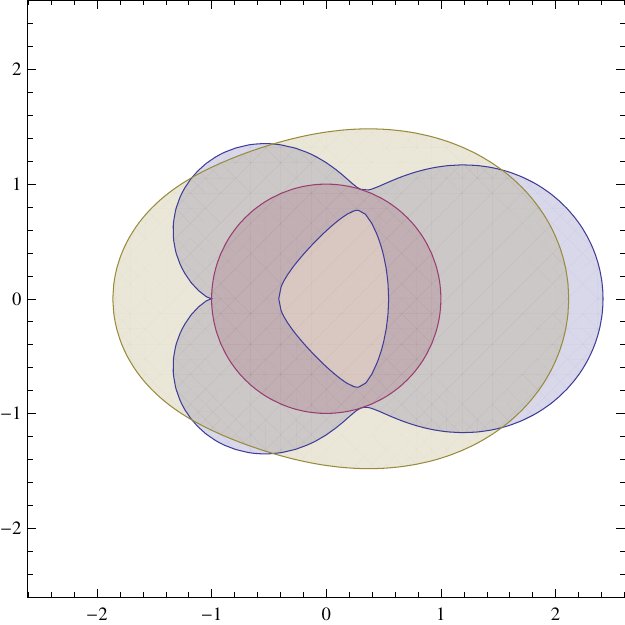}
    \put(25,50){-1 $\bullet$}
    \put(89.5,50){$\bullet$}
    \put(86,50){2}
    \put(50,32){$\bullet$}
    \put(49,26.5){$-i$}
    \put(50,69){$\bullet$}
    \put(53.5,74.5){$i$}
    \end{overpic}
\end{tabular}
\caption{The graph $\mathcal{G}$ (left) and $\mathcal{BW}_\Gamma(\mathcal{G})$ (right) where $\sigma(\mathcal{G})=\{-1,-1,2,-i,i\}$ is indicated.}
\end{figure}

As an illustration of theorem \ref{theorem2} consider the following example. Let $\mathcal{G}\in\mathbb{G}$ be the graph with adjacency matrix
\begin{equation}\label{matrix}
M(\mathcal{G})=\left[ \begin{array}{ccc}
\frac{\lambda+1}{\lambda^2} & \frac{1}{\lambda} & \frac{\lambda+1}{\lambda}\\
\frac{2\lambda+1}{\lambda^2} & \frac{1}{\lambda} & \frac{1}{\lambda}\\
0 & 1 & 0\\
\end{array}
\right].
\end{equation}
As $\det(M(\mathcal{G},\lambda)-\lambda I)=(-\lambda^5+2\lambda^3+2\lambda^2+3\lambda+2)/(\lambda^2)$ one can compute that $\sigma(\mathcal{G})=\{-1,-1,i,-i,2\}$. The corresponding Gershgorin-type region $\mathcal{BW}_\Gamma(\mathcal{G})$ is shown in figure 1 where
\begin{equation*}
M(\bar{\mathcal{G}})=\left[ \begin{array}{ccc}
-\lambda^5+\lambda^3+\lambda^2+\lambda & \lambda^3 & \lambda^4+\lambda^3\\
2\lambda^3+\lambda^2 & -\lambda^5+\lambda^3+\lambda & \lambda^3\\
0 & 1 & 0\\
\end{array}
\right].
\end{equation*}

We note here that $\mathcal{BW}_\Gamma(\mathcal{G})$ is the union of the three regions $\mathcal{BW}_\Gamma(\mathcal{G})_1$, $\mathcal{BW}_\Gamma(\mathcal{G})_2$, and $\mathcal{BW}_\Gamma(\mathcal{G})_3$ whose boundaries are shown in blue, red, and tan. The interior colors of these regions reflect their intersections and the eigenvalues $\sigma(\mathcal{G})$ are indicated as points. In the examples that follow we will use the same technique to display similar regions.

\subsection{Brauer-Type Regions} Following Gershgorin, Brauer was able to give the following eigenvalue inclusion result for matrices with complex valued entries.

\begin{theorem}{\textbf{(Brauer \cite{Varga09})}}\label{theorem3}
Let $A\in\mathbb{C}^{n\times n}$ where $n\geq 2$. Then all eigenvalues of $A$ are located in the set
\begin{equation}\label{eq2}
\mathcal{K}(A)={\bigcup_{\begin{smallmatrix} 1\leq i,j\leq n\\ i\neq j\end{smallmatrix}}}\{\lambda\in \mathbb{C}:|\lambda-A_{ii}||\lambda-A_{jj}|\leq r_i(A)r_j(A)\}.
\end{equation}
\end{theorem}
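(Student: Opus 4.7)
The plan is to mimic the classical proof of Gershgorin's theorem but to exploit the two largest components of an eigenvector rather than just the largest one. Let $\lambda$ be an eigenvalue of $A$ with eigenvector $x=(x_1,\ldots,x_n)^T\neq 0$. Relabel the indices (or simply choose $p,q$) so that $|x_p|$ and $|x_q|$ are the two largest absolute values among the $|x_k|$, with $p\neq q$ and $|x_p|\geq|x_q|\geq|x_k|$ for all $k\notin\{p,q\}$. Such a pair exists because $n\geq 2$.

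From the $p$th row of $Ax=\lambda x$ I would isolate the diagonal term to obtain $(\lambda-A_{pp})x_p=\sum_{j\neq p}A_{pj}x_j$, and then estimate the right-hand side using $|x_j|\leq|x_q|$ for $j\neq p$, giving
\begin{equation*}
|\lambda-A_{pp}|\,|x_p|\;\leq\;r_p(A)\,|x_q|.
\end{equation*}
Running the same argument starting from the $q$th row, and using $|x_j|\leq|x_p|$ for $j\neq q$, yields
\begin{equation*}
|\lambda-A_{qq}|\,|x_q|\;\leq\;r_q(A)\,|x_p|.
\end{equation*}

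Next I would multiply these two inequalities together and, provided $|x_p||x_q|>0$, divide by this common positive factor to obtain
\begin{equation*}
|\lambda-A_{pp}|\,|\lambda-A_{qq}|\;\leq\;r_p(A)\,r_q(A),
\end{equation*}
which places $\lambda$ in the Brauer Cassini oval indexed by $(p,q)$, and hence in $\mathcal{K}(A)$.

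The only real obstacle is the degenerate case $|x_q|=0$, in which the cancellation above is illegitimate. Here $x$ has a single nonzero entry $x_p$, so the $p$th row of the eigenvalue equation immediately forces $\lambda=A_{pp}$. Then for any $j\neq p$ the left-hand side $|\lambda-A_{pp}|\,|\lambda-A_{jj}|$ vanishes, so the Cassini inequality holds trivially (taking any $j\neq p$, which exists since $n\geq 2$), and again $\lambda\in\mathcal{K}(A)$. This exhausts the cases and completes the proof.
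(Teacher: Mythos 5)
Your proof is correct and complete: it is the standard two-row refinement of the Gershgorin argument (using the two largest eigenvector components), including the necessary treatment of the degenerate case $|x_q|=0$. The paper states this theorem as a classical result and cites Varga without proof, so there is nothing to compare against; your argument is precisely the one given in the cited reference.
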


The individual regions given by $\{\lambda\in \mathbb{C}:|\lambda-A_{ii}||\lambda-A_{jj}|\leq r_i(A)r_j(A)\}$ in equation (\ref{eq2}) are known as Cassini ovals and may consists of one or two distinct components. Moreover, there are $\binom{n}{2}$ such regions for any $n\times n$ matrix with complex entries. As with Gershgorin's theorem we prove an extension to Brauer's theorem for matrices in $\mathbb{W}^{n\times n}$.

\begin{theorem}\label{theorem4}
Let $G\in\mathbb{G}^n$ where $n\geq 2$. Then $\sigma(G)$ is contained in the set
$$\mathcal{BW}_\mathcal{K}(G)=\bigcup_{\begin{smallmatrix} 1\leq i,j\leq n\\ i\neq j\end{smallmatrix}}\{\lambda\in\mathbb{C}:|\lambda-M(\bar{G})_{ii}||\lambda-M(\bar{G})_{jj}|\leq r_i(\bar{G})r_j(\bar{G})\}.$$ Also, $\mathcal{BW}_\mathcal{K}(G)\subseteq\mathcal{BW}_\Gamma(G)$.
\end{theorem}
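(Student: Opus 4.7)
The plan is to mimic exactly the strategy used in the proof of Theorem \ref{theorem2}, replacing Gershgorin's classical theorem with Brauer's (Theorem \ref{theorem3}), and then to deduce the inclusion $\mathcal{BW}_\mathcal{K}(G)\subseteq\mathcal{BW}_\Gamma(G)$ from the same pointwise arithmetic that establishes the classical $\mathcal{K}(A)\subseteq\Gamma(A)$.

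For the containment of $\sigma(G)$ in $\mathcal{BW}_\mathcal{K}(G)$, I would take $\alpha\in\sigma(G)$ and invoke Lemma \ref{lemma1} to conclude $\alpha\in\sigma(\bar{G})$. Since $M(\bar{G},\lambda)\in\mathbb{C}[\lambda]^{n\times n}$, the evaluation $M(\bar{G},\alpha)$ is an honest complex matrix with $\alpha$ as an eigenvalue. Applying Brauer's theorem (Theorem \ref{theorem3}) to this complex matrix yields indices $i\neq j$ with
\[
|\alpha-M(\bar{G},\alpha)_{ii}|\,|\alpha-M(\bar{G},\alpha)_{jj}|\leq r_i(M(\bar{G},\alpha))\,r_j(M(\bar{G},\alpha)).
\]
By definition $r_k(M(\bar{G},\alpha))=r_k(\bar{G},\alpha)$ (row sums commute with evaluation), so $\alpha$ lies in the $(i,j)$-Cassini set of $\mathcal{BW}_\mathcal{K}(G)$. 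This is a direct transcription of the proof of Theorem \ref{theorem2}, so no new difficulties arise here.

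For the inclusion $\mathcal{BW}_\mathcal{K}(G)\subseteq\mathcal{BW}_\Gamma(G)$, I would argue pointwise. Fix $\lambda\in\mathcal{BW}_\mathcal{K}(G)$, so there exist $i\neq j$ with
\[
|\lambda-M(\bar{G},\lambda)_{ii}|\,|\lambda-M(\bar{G},\lambda)_{jj}|\leq r_i(\bar{G},\lambda)\,r_j(\bar{G},\lambda).
\]
All four quantities above are nonnegative real numbers (the $r_k(\bar{G},\lambda)$ are sums of moduli of complex numbers). If both $|\lambda-M(\bar{G},\lambda)_{ii}|>r_i(\bar{G},\lambda)$ and $|\lambda-M(\bar{G},\lambda)_{jj}|>r_j(\bar{G},\lambda)$ held strictly, the product on the left would strictly exceed the product on the right, a contradiction. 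Hence at least one of these inequalities fails, which by definition places $\lambda$ in $\mathcal{BW}_\Gamma(G)_i$ or $\mathcal{BW}_\Gamma(G)_j$, hence in $\mathcal{BW}_\Gamma(G)$.

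I do not expect a real obstacle in either part. The only subtle point is that the extension from the classical complex-matrix statement to the rational-function setting is not automatic a priori because the entries of $M(G,\lambda)$ depend on $\lambda$; however, the polynomial extension $\bar{G}$ together with Lemma \ref{lemma1} reduces every evaluation at a spectral point to the classical situation, and this is the mechanism that was already made to work in Theorem \ref{theorem2}. The containment between regions is then purely a statement about nonnegative reals, so it transfers verbatim from the classical proof.
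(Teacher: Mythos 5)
Your proposal is correct and follows essentially the same route as the paper: Lemma \ref{lemma1} plus Brauer's classical theorem applied to the complex matrix $M(\bar{G},\alpha)$ for the containment of the spectrum, and elementary arithmetic of nonnegative reals for $\mathcal{BW}_\mathcal{K}(G)\subseteq\mathcal{BW}_\Gamma(G)$. The only cosmetic difference is in the second part, where you argue by contraposition (if both Gershgorin inequalities fail strictly, the product inequality fails) while the paper splits into the cases $r_i r_j=0$ and $r_i r_j>0$ and divides through; both handle the degenerate zero-row-sum case correctly.
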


\begin{proof}
As in the proof of theorem \ref{theorem2}, if $\alpha\in\sigma(G)$ then $\alpha\in\sigma(\bar{G})$ and the matrix $M(\bar{G},\alpha)\in\mathbb{C}^{n\times n}$. Brauer's theorem therefore implies that
$$|\alpha-M(\bar{G},\alpha)_{ii}||\alpha-M(\bar{G},\alpha)_{jj}|\leq r_i(\bar{G},\alpha)r_j(\bar{G},\alpha)$$ for some pair of distinct integers $i$ and $j$. It then follows that, $\alpha\in\mathcal{BW}_\mathcal{K}(G)$ or $\sigma(G)\subseteq\mathcal{BW}_\mathcal{K}(G)$.

Following the proof in \cite{Varga09}, to prove the assertion that $\mathcal{BW}_\mathcal{K}(G)\subseteq\mathcal{BW}_\Gamma(G)$ let
\begin{equation}\label{eq-2} \mathcal{BW}_\mathcal{K}(G)_{ij}=\{\lambda\in\mathbb{C}:|\lambda-M(\bar{G},\lambda)_{ii}||\lambda-M(\bar{G},\lambda)_{jj}|\leq r_i(\bar{G},\lambda)r_j(\bar{G},\lambda)\}
\end{equation} for distinct $i$ and $j$. The claim then is that $\mathcal{BW}_\mathcal{K}(G)_{ij}\subseteq\mathcal{BW}_\Gamma(G)_i\cup\mathcal{BW}_\Gamma(G)_j$. To see this, assume for a fixed $\lambda$ that $\lambda\in\mathcal{BW}_\mathcal{K}(G)_{ij}$ or
$$|\lambda-M(\bar{G},\lambda)_{ii}||\lambda-M(\bar{G},\lambda)_{jj}|\leq r_i(\bar{G},\lambda)r_j(\bar{G},\lambda).$$ If $r_i(\bar{G},\lambda)r_j(\bar{G},\lambda)=0$ then either $\lambda-M(\bar{G},\lambda)_{ii}=0$ or $\lambda-M(\bar{G},\lambda)_{jj}=0$. As $\lambda=M(\bar{G},\lambda)_{ii}$ implies $\lambda\in\mathcal{BW}_\Gamma(G)_i$ and $\lambda=M(\bar{G},\lambda)_{jj}$ implies $\lambda\in\mathcal{BW}_\Gamma(G)_j$ then $\lambda\in\mathcal{BW}_\Gamma(G)_i\cup\mathcal{BW}_\Gamma(G)_j$.

If $r_i(\bar{G},\lambda)r_j(\bar{G},\lambda)>0$ then it follows that
$$\Big(\frac{|\lambda-M(\bar{G},\lambda)_{ii}|}{r_i(\bar{G},\lambda)}\Big)\Big(\frac{|\lambda-M(\bar{G},\lambda)_{jj}|}{r_j(\bar{G},\lambda)}\Big)\leq 1.$$ Since at least one of the two quotients on the left must be less than or equal to 1 then $\lambda\in\mathcal{BW}_\Gamma(G)_i\cup\mathcal{BW}_\Gamma(G)_j$ which verifies the claim and the result follows.
\end{proof}

We call the region $\mathcal{BW}_\mathcal{K}(G)$ the \textit{Brauer-type region} of the graph $G$ and the region $\mathcal{BW}_\mathcal{K}(G)_{ij}$ given in (\ref{eq-2}) the \textit{$ij$th Brauer-type region} of $G$. Using theorem \ref{theorem4} on the graph $\mathcal{G}$ given in figure 1 we have the Brauer-type region shown in the left hand side of figure 2. On the right is a comparison between $\mathcal{BW}_\mathcal{K}(\mathcal{G})$ and $\mathcal{BW}_\Gamma(\mathcal{G})$ where the inclusion $\mathcal{BW}_\mathcal{K}(\mathcal{G})\subseteq\mathcal{BW}_\Gamma(\mathcal{G})$ is demonstrated.

\begin{figure}
\centering
\begin{tabular}{cc}
    \begin{overpic}[scale=.5]{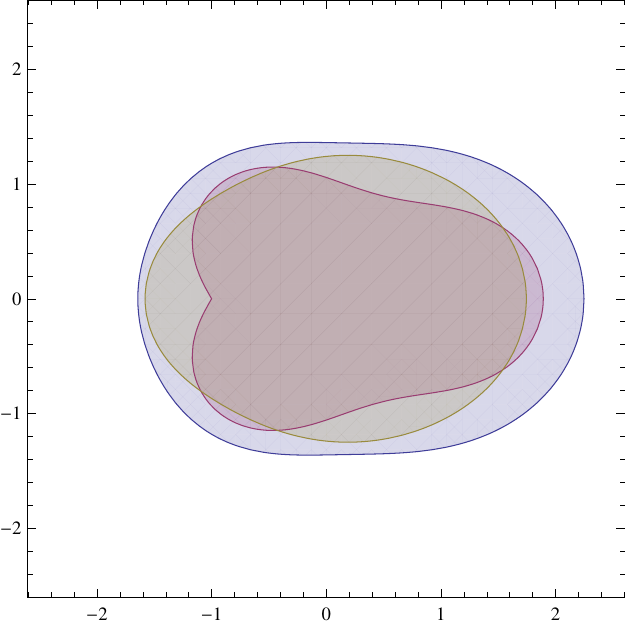}
    \put(25,50){-1 $\bullet$}
    \put(89.5,50){$\bullet$ 2}
    \put(50,32){$\bullet$}
    \put(46,36){$-i$}
    \put(50,69){$\bullet$}
    \put(50.5,64){$i$}
    \end{overpic} &
    \begin{overpic}[scale=.5]{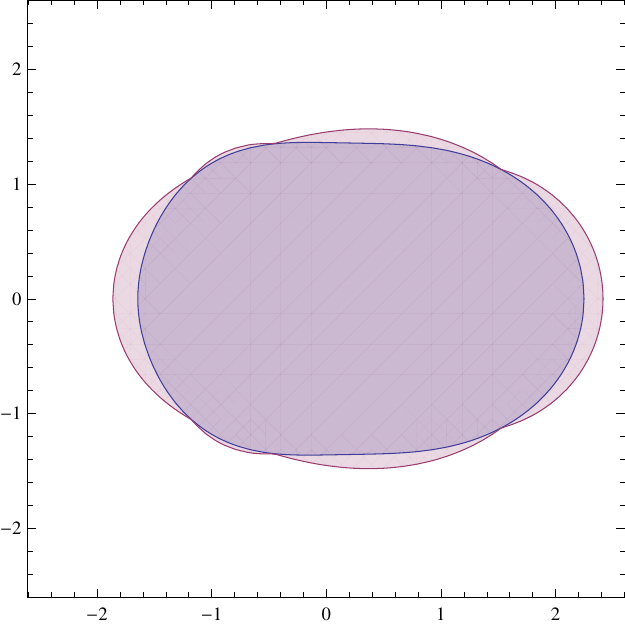}
    \put(25,50){-1 $\bullet$}
    \put(89.5,50){$\bullet$}
    \put(84,50){2}
    \put(51,32){$\bullet$ $-i$}
    \put(51,69){$\bullet$ $i$}
    \end{overpic}
\end{tabular}
\caption{Left: The Brauer region $\mathcal{K}(\mathcal{G})$ for $\mathcal{G}$ in figure 1. Right: $\mathcal{K}(\mathcal{G})\subseteq\Gamma(\mathcal{G})$.}
\end{figure}

\subsection{Brualdi-Type Regions} In this section we first extend a result of Varga \cite{Varga09}, which is itself an extension of a result of Brualdi \cite{Brualdi82}. This result of Varga relates the spectrum of a graph with complex weights to its cycle structure. We then show that the same can be done for the original result of Brualdi.

A \textit{path} $P$ in the graph $G=(V,E,\omega)$ is a sequence of distinct vertices $v_1,\dots,v_m\in V$ such that $e_{i,i+1}\in E$ for $1\leq i\leq m-1$. In the case that the vertices $v_1,\dots,v_m$ are distinct, with the exception that $v_1=v_m$, then $P$ is a \textit{cycle}. If $\gamma$ is a cycle of $G$ we denote it by its ordered set of vertices. That is, if $e_{i,i+1}\in E$ for $1\leq i\leq m-1$ and $e_{m1}\in E$ then we write this cycle as the ordered set of vertices $\{v_1,\dots,v_m\}$ up to cyclic permutation. Moreover, we call a cycle consisting of a single vertex a \textit{loop}.

A \textit{strong cycle} of $G$ is a cycle $\{v_1,\dots,v_m\}$ such that $m\geq 2$. Furthermore, if $v_i\in V$ has no strong cycle passing through it then we define its associated \textit{weak cycle} as $\{v_i\}$ regardless of whether $e_{ii}\in E$. For $G\in\mathbb{G}$ we let $C_s(G)$ and $C_w(G)$ denote the set of strong and weak cycles of $G$ respectively and let $C(G)=C_s(G)\cup C_w(G)$.

A directed graph is \textit{strongly connected} if there is a path from each vertex of the graph to every other vertex. The \textit{strongly connected components} of $G=(V,E,\omega)$ are its maximal strongly connected subgraphs. Moreover, the vertex set $V=\{v_1,\dots,v_n\}$ of $G$ can always be labeled in such a way that $M(G)$ has the triangular block structure

$$M(G)=\left[ \begin{array}{cccc}
M(\mathbb{S}_1(G))      & 0     & \dots  & 0\\
*        & M(\mathbb{S}_2(G))   &        & \vdots\\
\vdots   &       & \ddots & 0\\
*        & \dots & *      & M(\mathbb{S}_m(G))
\end{array}
\right]$$ where $\mathbb{S}_i(G)$ is a strongly connected component of $G$ and $*$ are block matrices with possibly nonzero entries (see \cite{Brualdi91}, \cite{Horn85}, or \cite{Varga09} for more details).

As the strongly connected components of a graph are unique then for $G\in\mathbb{G}^n$ we define
$$\tilde{r}_i(G)=\sum_{j\in N_\ell,j\neq i}|M(\mathbb{S}_\ell(G))_{ij}| \ \text{for} \ 1\leq i\leq n$$
where $i\in N_\ell$ and $N_\ell$ is the set of indices indexing the vertices in $\mathbb{S}_\ell(G)$. That is, $\tilde{r}_i(G)$ is $r_i(G)$ restricted to the strongly connected component containing $v_i$. Furthermore, we let $\tilde{r}_i(\bar{G})=\tilde{r}_i(\bar{G},\lambda)$ where we again consider $\tilde{r}_i(\bar{G},\cdot):\mathbb{C}\rightarrow\mathbb{C}$

If $A\in\mathbb{C}^{n\times n}$ then we write $\tilde{r}_i(G,\lambda)=\tilde{r}_i(A)$ where $A=M(G)$. Moreover, we let $C(A)=C_s(A)\cup C_w(A)$. This allows us to state the following theorem by Varga which, as previously mentioned, is an extension of Brualdi's original theorem \cite{Brualdi82}.

\begin{theorem}{\textbf{(Varga \cite{Varga09})}}\label{theorem5}
Let $A\in\mathbb{C}^{n\times n}$. Then the eigenvalues of $A$ are contained in the set
$$B(A)=\bigcup_{\gamma\in C(A)}\{\lambda\in \mathbb{C}:\prod_{v_i\in\gamma}|\lambda-A_{ii}|\leq\prod_{v_i\in\gamma}\tilde{r}_i(A)\}.$$
\end{theorem}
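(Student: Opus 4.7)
The plan is a two-stage argument: first reduce to the case where $A$'s graph is strongly connected, and then inside such a component use the eigenvector of $\lambda$ to trace a directed walk that closes into a strong cycle $\gamma$ certifying $\lambda\in B(A)$.

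For the reduction, relabel so that $A$ has the block lower-triangular form displayed in the excerpt, with diagonal blocks $M(\mathbb{S}_\ell(G))$. Then $\sigma(A)=\bigcup_\ell\sigma(M(\mathbb{S}_\ell(G)))$. Since a directed cycle cannot cross between distinct strongly connected components, $C(G)$ is the disjoint union of the $C(\mathbb{S}_\ell(G))$ and $\tilde r_i$ coincides with the ordinary row sum of whichever diagonal block contains $v_i$; hence $B(A)=\bigcup_\ell B(M(\mathbb{S}_\ell(G)))$. For a singleton component $\{v_i\}$ the unique eigenvalue $A_{ii}$ of the $1\times 1$ block is captured by the weak cycle $\{v_i\}\in C_w(G)$, whose inequality $|z-A_{ii}|\leq 0=\tilde r_i(A)$ pins $z=A_{ii}$. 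So it suffices to treat a strongly connected component on at least two vertices.

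For that case let $A$ be the adjacency matrix of such a component, $\lambda$ an eigenvalue, and $x\neq 0$ a corresponding eigenvector. Strong connectivity forces $\tilde r_i(A)=r_i(A)>0$ and ensures every vertex lies on some strong cycle. If $\lambda=A_{ii}$ for some $i$, any strong cycle $\gamma$ through $v_i$ gives $\prod_{v_j\in\gamma}|\lambda-A_{jj}|=0\leq\prod_{v_j\in\gamma}r_j(A)$ and we are done. Otherwise $\alpha_i:=|\lambda-A_{ii}|/r_i(A)>0$ for all $i$. From $(\lambda-A_{ii})x_i=\sum_{j\neq i}A_{ij}x_j$ and the triangle inequality, for each $i$ with $x_i\neq 0$ there is $i'\neq i$ with $A_{ii'}\neq 0$ and $|x_{i'}|\geq\alpha_i|x_i|$. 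Starting at $i_1$ with $|x_{i_1}|>0$ and iterating this selection yields a walk $i_1,i_2,\dots$ with $i_{k+1}\neq i_k$ along which the bound $|x_{i_{k+1}}|\geq\alpha_{i_k}|x_{i_k}|>0$ propagates. Finiteness of $V$ forces a first repetition $i_a=i_b$, and the sub-walk $i_a,i_{a+1},\dots,i_{b-1}$ consists of distinct vertices, so $\gamma=\{i_a,\dots,i_{b-1}\}$ is a strong cycle. Multiplying the bounds around $\gamma$ and cancelling the telescoping $|x|$ factors gives $\prod_{j=a}^{b-1}\alpha_{i_j}\leq 1$, i.e., the Brualdi inequality for $\gamma$.

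I expect the main difficulty to be keeping the iterated walk alive: at an index $i$ where $\alpha_i=0$ the inductive positivity $|x_{i_{k+1}}|\geq\alpha_{i_k}|x_{i_k}|>0$ fails, which is exactly why the case $\lambda=A_{ii}$ must be treated separately before starting the induction. Once that case and the singleton-component case (handled by the weak cycle convention) are disposed of, the remaining cycle-extraction argument is the core of the proof.
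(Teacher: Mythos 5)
Your proof is correct. Note first that the paper itself offers no proof of this statement: it is quoted as a known result and attributed to Varga's book, so there is no in-paper argument to compare against. What you have written is essentially the classical proof of Brualdi's theorem together with Varga's refinement for reducible matrices. The two stages are both sound: the block-triangular reduction works because every cycle (strong or weak) lives inside a single strongly connected component, the weak cycles are exactly the singleton components (in a strongly connected digraph on at least two vertices every vertex has an out-neighbour distinct from itself and hence lies on a strong cycle, so no weak cycles survive there), and $\tilde r_i$ is by definition the row sum of the relevant diagonal block. In the irreducible stage, the selection step is justified because $r_i(A)>0$ guarantees the sum $\sum_{j\neq i}A_{ij}x_j$ has at least one term, so if every out-neighbour satisfied $|x_j|<\alpha_i|x_i|$ the triangle inequality would give a strict contradiction; the inductive positivity $|x_{i_{k+1}}|\geq\alpha_{i_k}|x_{i_k}|>0$ keeps the walk alive, the first repetition yields a strong cycle since consecutive indices differ, and the telescoping cancellation of the $|x_{i_k}|$ delivers $\prod_{v_i\in\gamma}\alpha_i\leq 1$, which is the Brualdi inequality for $\gamma$. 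You correctly identified that the only delicate point is the case $\lambda=A_{ii}$, which must be split off before the walk is started; your treatment of it via an arbitrary strong cycle through $v_i$ is fine.
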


As with the theorems of Gershgorin and Brauer this result can be extended to matrices in $\mathbb{W}^{n\times n}$.

\begin{theorem}\label{theorem6}
Let $G\in\mathbb{G}$. Then $\sigma(G)$ is contained in the set
\begin{equation}\label{eq3}
\mathcal{BW}_B(G)=\bigcup_{\gamma\in C(\bar{G})}\{\lambda\in\mathbb{C}:\prod_{v_i\in\gamma}|\lambda-M(\bar{G})_{ii}|\leq\prod_{v_i\in\gamma}\tilde{r}_i(\bar{G})\}.
\end{equation}
Also, $\mathcal{BW}_B(G)\subseteq \mathcal{BW}_\mathcal{K}(G)$.
\end{theorem}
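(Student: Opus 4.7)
\textbf{Step 1: proving $\sigma(G)\subseteq\mathcal{BW}_B(G)$.} My plan for this half is to mimic Theorems~\ref{theorem2} and~\ref{theorem4}: specialize the polynomial extension at an eigenvalue and apply the relevant classical theorem. Given $\alpha\in\sigma(G)$, Lemma~\ref{lemma1} yields $\alpha\in\sigma(\bar{G})$, and since $M(\bar{G},\lambda)$ has polynomial entries the matrix $A:=M(\bar{G},\alpha)$ lies in $\mathbb{C}^{n\times n}$ with $\alpha$ as an eigenvalue. Theorem~\ref{theorem5} applied to $A$ provides a cycle $\gamma\in C(A)$ satisfying $\prod_{v_i\in\gamma}|\alpha-A_{ii}|\le\prod_{v_i\in\gamma}\tilde r_i(A)$. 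Since every nonzero entry of $A$ comes from a nonzero polynomial entry of $M(\bar{G},\lambda)$, the digraph of $A$ is a subdigraph of $\bar{G}$, hence $\gamma\in C(\bar{G})$, and the strongly connected component of any $v_i$ in $A$ is contained in its strongly connected component in $\bar{G}$, which yields $\tilde r_i(A)\le\tilde r_i(\bar{G},\alpha)$. Substituting on the right of the Brualdi inequality places $\alpha$ in the $\gamma$-indexed set of (\ref{eq3}).

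\textbf{Step 2: proving $\mathcal{BW}_B(G)\subseteq\mathcal{BW}_\mathcal{K}(G)$.} I will show each $\gamma$-indexed piece of $\mathcal{BW}_B(G)$ is contained in some pair-indexed piece $\mathcal{BW}_\mathcal{K}(G)_{ij}$, splitting on whether $\gamma$ is weak or strong. If $\gamma=\{v_i\}$ is weak then by definition $v_i$ is its own strongly connected component with no strong cycle through it, so $\tilde r_i(\bar{G},\lambda)\equiv 0$ and the Brualdi inequality forces $\lambda=M(\bar{G},\lambda)_{ii}$; for any $j\neq i$ the Brauer product then vanishes, placing $\lambda$ in $\mathcal{BW}_\mathcal{K}(G)_{ij}$. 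If $\gamma=\{v_{i_1},\dots,v_{i_m}\}$ is strong with $m\ge 2$, I use $\tilde r_{i_k}(\bar{G},\lambda)\le r_{i_k}(\bar{G},\lambda)$ to derive $\prod_{k=1}^m|\lambda-M(\bar{G},\lambda)_{i_k i_k}|\le\prod_{k=1}^m r_{i_k}(\bar{G},\lambda)$ and then invoke the elementary lemma: if nonnegative reals $a_1,\dots,a_m,b_1,\dots,b_m$ with $m\ge 2$ satisfy $\prod_k a_k\le\prod_k b_k$, then there exist distinct indices $p,q$ with $a_p a_q\le b_p b_q$. Taking $a_k=|\lambda-M(\bar{G},\lambda)_{i_k i_k}|$ and $b_k=r_{i_k}(\bar{G},\lambda)$ then produces the desired pair $(i_p,i_q)$ with $\lambda\in\mathcal{BW}_\mathcal{K}(G)_{i_p i_q}$.

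\textbf{Main obstacle.} The only genuinely nontrivial ingredient is the product-to-pair lemma used in Step~2. Its proof is short: dispose of the case where some $b_k=0$ separately (then $\prod_k a_k=0$ forces an $a_l=0$ and any pair including index $l$ works), pass to $c_k=\log(a_k/b_k)$ so that $\sum_k c_k\le 0$, and then verify that the two smallest values $c_{(1)}\le c_{(2)}$ of the sorted sequence satisfy $c_{(1)}+c_{(2)}\le 0$, which is the only place the hypothesis $m\ge 2$ is needed. The rest of the argument is routine because in both steps the substitutions I make (from $A$ to $\bar{G}$ in Step~1, and from $\tilde r$ to $r$ in Step~2) only enlarge the right-hand side of the governing inequality.
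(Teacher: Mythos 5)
Your Step 2 is correct and matches the paper's argument in substance: the weak-cycle case is handled identically, and your product-to-pair lemma is a clean repackaging of the paper's device of raising the Brualdi inequality to the $(p-1)$st power and observing that not every factor of the resulting product over pairs can exceed unity. Your observation in Step 1 that $\tilde r_i(A)\le\tilde r_i(\bar G,\alpha)$, because strongly connected components of the digraph of $A$ sit inside those of $\bar G$, is also correct and worth making explicit.

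The gap is in Step 1, in the inference ``the digraph of $A$ is a subdigraph of $\bar G$, hence $\gamma\in C(\bar G)$.'' This is valid only when $\gamma$ is a \emph{strong} cycle of $A$. The cycle set $C(A)=C_s(A)\cup C_w(A)$ also contains weak cycles, and a weak cycle of $A$ need not lie in $C(\bar G)$: precisely because the digraph of $A=M(\bar G,\alpha)$ can be a \emph{proper} subdigraph of $\bar G$ (a nonzero polynomial entry of $M(\bar G,\lambda)$ may vanish at $\lambda=\alpha$), a vertex $v_j$ that lies on a strong cycle of $\bar G$ may lie on no strong cycle of the digraph of $A$. Varga's theorem may then return $\gamma=\{v_j\}\in C_w(A)$, and this singleton belongs to neither $C_s(\bar G)$ nor $C_w(\bar G)$, so no term of the union in (\ref{eq3}) is indexed by it. The conclusion still holds, but it requires the separate argument the paper supplies: for such a $\gamma$ the vertex $v_j$ is its own strongly connected component in the digraph of $A$, so $\tilde r_j(A)=0$ and the Brualdi inequality forces $\alpha=M(\bar G,\alpha)_{jj}$; since $v_j$ does belong to \emph{some} cycle $\gamma'\in C(\bar G)$ (a strong one, or else its own weak cycle), the product $\prod_{v_i\in\gamma'}|\alpha-M(\bar G,\alpha)_{ii}|$ contains a vanishing factor, whence $\alpha\in\mathcal{BW}_B(G)_{\gamma'}$. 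Adding this case closes the gap.
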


We call $\mathcal{BW}_B(G)$ the \textit{Brualdi-type region} of the graph $G$ and the set
$$\mathcal{BW}_B(G)_{\gamma}=\{\lambda\in\mathbb{C}:\prod_{v_i\in\gamma}|\lambda-M(\bar{G})_{ii}|\leq\prod_{v_i\in\gamma}\tilde{r}_i(\bar{G})\}$$ the Brualdi-type region associated with the cycle $\gamma\in C(\bar{G})$.

\begin{proof}
For $G\in\mathbb{G}^n$ let $\bar{G}=\bar{G}(\lambda)$ where for fixed $\alpha\in\mathbb{C}$, $\bar{G}(\alpha)$ is the graph with adjacency matrix $M(\bar{G},\alpha)\in\mathbb{C}^{n\times n}$. Moreover, for any $\gamma=\{v_1,\dots,v_m\}$ in $C(\bar{G})$ and fixed $\alpha\in\mathbb{C}$ let $\gamma(\alpha)$ be the set of vertices $\{v_1,\dots,v_m\}$ in the graph $\bar{G}(\alpha)$.

Using this notation, if $\alpha\in\sigma(G)$ then by lemma \ref{lemma1} and theorem \ref{theorem5} there exists a $\gamma^\prime\in C(\bar{G}(\alpha))$ such that
\begin{equation}\label{eq-3}
\prod_{v_i\in\gamma^\prime}|\alpha-M(\bar{G},\alpha)_{ii}|\leq\prod_{v_i\in\gamma^\prime}\tilde{r}_i(\bar{G},\alpha).
\end{equation}
There are then two possibilities, either $\gamma^\prime\in C(\bar{G})$ or it is not. If $\gamma^\prime\in C(\bar{G})$ then the set of vertices $\gamma^\prime(\alpha)$ is also a cycle in $\bar{G}$ in which case equation (\ref{eq3}) and (\ref{eq-3}) imply $\alpha\in \mathcal{BW}_B(G)$. Suppose then that $\gamma^\prime\notin C(\bar{G})$.

Note that if $\gamma^\prime\in C_s(\bar{G}(\alpha))$ then as $M(\bar{G},\alpha)_{ij}\neq 0$ implies $M(\bar{G},\lambda)_{ij}\neq 0$ for $i\neq j$ then $\gamma^\prime\in C_s(\bar{G})$, which is not possible. Hence, $\gamma^\prime\in C_w(\bar{G}(\alpha))$ or $\gamma^\prime$ must be a loop of some vertex $v_j$ where the graph induced by $\{v_j\}$ in $\bar{G}(\alpha)$ is a strongly connected component of $\bar{G}(\alpha)$. Therefore, equation (\ref{eq-3}) is equivalent to
$|\alpha-M(\bar{G},\alpha)_{jj}|\leq 0$ implying $\alpha=M(\bar{G},\alpha)_{jj}$.

As some cycle $\gamma\in C(\bar{G})$ contains the vertex $v_j$ then $\alpha$ is contained in the set
$$\{\lambda\in\mathbb{C}:\prod_{v_i\in\gamma}|\lambda-M(\bar{G},\lambda)_{ii}|\leq\prod_{v_i\in\gamma}\tilde{r}_i(\bar{G},\lambda)\}$$ implying that $\alpha\in\mathcal{BW}_B(G)$.

To show that $\mathcal{BW}_B(G)\subseteq \mathcal{BW}_\mathcal{K}(G)$ we again follow the proof in \cite{Varga09}. Let $\gamma\in C(\bar{G})$. Supposing that $\gamma\in C_w(\bar{G})$ then $\gamma=\{v_i\}$ for some vertex $v_i$ of $G$ and
$$\mathcal{BW}_B(G)_{\gamma}=\{\lambda\in\mathbb{C}:|\lambda-M(\bar{G},\lambda)_{ii}|=0\}$$
as $v_i$ is the vertex set of some strongly connected component of $\bar{G}$. It follows from (\ref{eq-2}) that $\mathcal{BW}_B(G)_{\gamma}\subseteq\mathcal{BW}_\mathcal{K}(G)_{ij}$ for any $1\leq j \leq n$ where $i\neq j$. In particular, note that if $\tilde{r}_i(\bar{G},\lambda)=0$ then $\lambda\in\mathcal{BW}_\mathcal{K}(G)_{ij}$ for any $1\leq j \leq n$ where $i\neq j$.

If on the other hand, $\gamma\in C_s(\bar{G})$ then for convenience let $\gamma=\{v_1,\dots,v_p\}$ where $p>1$ and note that
\begin{equation}\label{eq20}
\mathcal{BW}_B(G)_{\gamma}=\{\lambda\in\mathbb{C}:\prod_{i=1}^p|\lambda-M(\bar{G},\lambda)_{ii}|\leq\prod_{i=1}^p\tilde{r}_i(\bar{G},\lambda)\}.
\end{equation}

Assuming $0<\tilde{r}_i(\bar{G},\lambda)$ for all $1\leq i\leq p$ then for fixed $\lambda\in \mathcal{BW}_B(G)_{\gamma}$ it follows by raising both sides of the inequality in (\ref{eq20}) to the $(p-1)st$ power that
\begin{equation}\label{eq21}
\prod_{\begin{smallmatrix} 1\leq i,j\leq p\\ i\neq j\end{smallmatrix}}\Big(\frac{|\lambda-M(\bar{G},\lambda)_{ii}||\lambda-M(\bar{G},\lambda)_{jj}|}{\tilde{r}_i(\bar{G},\lambda)\tilde{r}_j(\bar{G},\lambda)}\Big)\leq 1
\end{equation}
As not all the terms of the product in (\ref{eq21}) can exceed unity then for some pair of indices $\ell$ and $k$ where $1\leq \ell,k\leq p$ and $\ell\neq k$ it follows that
\begin{equation}\label{oldeq}
|\lambda-M(\bar{G},\lambda)_{kk}||\lambda-M(\bar{G},\lambda)_{\ell\ell}|\leq \tilde{r}_{k}(\bar{G},\lambda)\tilde{r}_{\ell}(\bar{G},\lambda).
\end{equation}
Using the fact that $\tilde{r}_i(\bar{G},\lambda)\leq r_i(\bar{G},\lambda)$ for all $1\leq i \leq n$ we conclude that $\lambda\in\mathcal{BW}_\mathcal{K}(G)_{k\ell}$ completing the proof.
\end{proof}

The Brualdi-type region for the graph $\mathcal{G}$ with adjacency matrix (\ref{matrix}) is shown in figure 3. We note that
$\mathcal{BW}_B(\mathcal{G})=\mathcal{BW}_\mathcal{K}(\mathcal{G})$ in this particular case.

\begin{figure}
\centering
\begin{tabular}{c}
    \begin{overpic}[scale=.5]{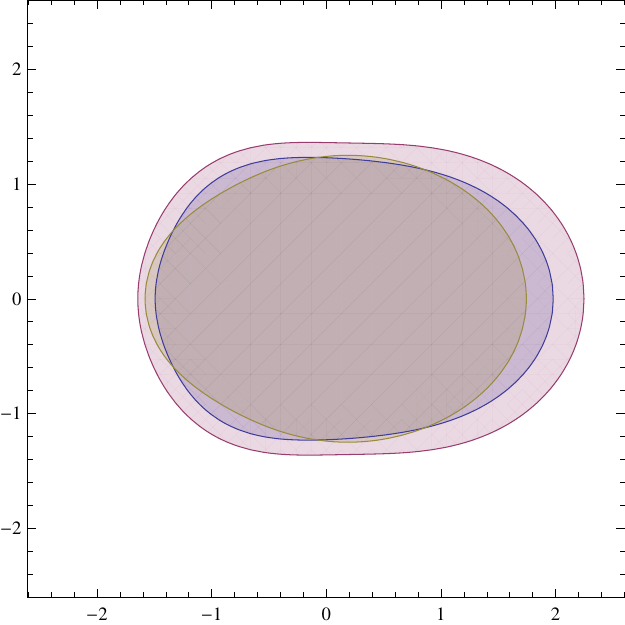}
    \put(25,50){-1 $\bullet$}
    \put(89.5,50){$\bullet$ 2}
    \put(50,32){$\bullet$}
    \put(46,36){$-i$}
    \put(50,69){$\bullet$}
    \put(50.5,64){$i$}
    \end{overpic}
\end{tabular}
\caption{The Brualdi-type region $\mathcal{BW}_B(\mathcal{G})$ for $\mathcal{G}$ in figure 1.}
\end{figure}

We now consider Brualdi's original result which can be stated as follows.

\begin{theorem}{\textbf{(Brualdi \cite{Brualdi82})}}\label{theorem-6}
Let $A\in\mathbb{C}^{n\times n}$ where $C_w(A)=\emptyset$. Then the eigenvalues of $A$ are contained in the set
$$br(A)=\bigcup_{\gamma\in C(A)}\{\lambda\in\mathbb{C}:\prod_{v_i\in \gamma}|\lambda-A_{ii}|\leq \prod_{v_i\in \gamma}r_i(A)\}.$$
\end{theorem}

As with the theorems of Gershgorin, Brauer, and Varga this result generalizes to matrices with entries in $\mathbb{W}$ as follows.

\begin{theorem}\label{theorem-7}
Let $G\in\mathbb{G}$ where $C_w(G)=\emptyset$. Then $\sigma(G)$ is contained in the set
\begin{equation}\label{br}
\mathcal{BW}_{br}(G)=\bigcup_{\gamma\in C(\bar{G})}\{\lambda\in\mathbb{C}:\prod_{v_i\in\gamma}|\lambda-M(\bar{G})_{ii}|\leq\prod_{v_i\in\gamma}r_i(\bar{G})\}.
\end{equation}
Also, $\mathcal{BW}_B(G)\subseteq\mathcal{BW}_{br}(G)\subseteq\mathcal{BW}_{\mathcal{K}}(G)$.
\end{theorem}

\begin{proof}
Note for any graph $G\in\mathbb{G}$ that $\tilde{r}_i(\bar{G})\leq r_i(\bar{G})$ for all $\lambda\in\mathbb{C}$. Hence,
$$\mathcal{BW}_B(G)\subseteq\bigcup_{\gamma\in C(\bar{G})}\{\lambda\in\mathbb{C}:\prod_{v_i\in \gamma}|\lambda-M(\bar{G})_{ii}|\leq \prod_{v_i\in \gamma}r_i(\bar{G})\}.$$
Theorem \ref{theorem6} then implies that $\sigma(G)$ is contained in the set $\mathcal{BW}_{br}(G)$.
Furthermore, if $\tilde{r}_i(G)$ is replaced by $r_i(G)$ in the proof of theorem \ref{theorem6} then in particular (\ref{oldeq}) implies that $\mathcal{BW}_{br}(G)\subseteq\mathcal{BW}_{\mathcal{K}}(G)$, completing the proof.
\end{proof}

We will refer to the region $\mathcal{BW}_{br}(G)$, given in (\ref{br}), as the \textit{original Brualdi-type} region of $G$.

\section{Isospectral Graph Reductions}
Here we present a method developed in \cite{BW09} which allows for the reduction of a graph $G\in\mathbb{G}$ while maintaining the graph's spectrum up to some known set. All results in this section can be found in \cite{BW09} as well as their proofs except for that of theorem \ref{frac} whose proof is contained in this section.

\subsection{Graph Reductions}

In the following if $S\subseteq V$ where $V$ is the vertex set of a graph we let $\bar{S}$ denote the complement of $S$ in $V$. Also if $\{v_1,\dots,v_{m}\}$ is a path in $G\in\mathbb{G}$ let the vertices $v_2,\dots,v_{m-1}$ of $P$ be its \textit{interior} vertices. If $P=\{v_1,\dots,v_m\}$ is a cycle where we fix some $v_i\in P$ then we say $P$ is a cycle from $v_i$ to $v_i$ where $P\setminus\{v_i\}$ are its \textit{interior} vertices.

Recall from section 2 that if we write the graph $G$ as some triple $(V,E,\omega)$ then we are assuming $G\in\mathbb{G}$. With this in mind we give the following definitions.

\begin{definition}\label{def1}
Let $G=(V,E,\omega)$. A nonempty vertex set $S\subseteq V$ is a \textit{structural set} of $G$ if\\
(i) each cycle of $G$, that is not a loop, contains a vertex in $S$; and\\
(ii) $\omega(e_{ii})\neq \lambda$ for each $v_i\in \bar{S}$.
\end{definition}

Part (i) of definition \ref{def1} states that a structural set $S$ of $G$ depends intrinsically on the structure of $G$. Part (ii), however, is the formal assumption that the loops of the vertices in $\bar{S}$, i.e. the complement of $S$, do not have weight equal to $\lambda\in\mathbb{W}[\lambda]$. For $G\in\mathbb{G}$ we let $st(G)$ denote the set of all structural sets of the graph $G$.

\begin{definition}
Suppose $G=(V,E,\omega)$ with structural set $S=\{v_1,\dots,v_m\}$. Let $\mathcal{B}_{ij}(G;S)$ be the set of paths or cycles from $v_i$ to $v_j$ with no interior vertices in $S$. We call a path or cycle $\beta\in\mathcal{B}_{ij}(G;S)$ a \textit{branch} of $G$ with respect to $S$. We let
$$\mathcal{B}_S(G)=\bigcup_{1\leq i,j \leq m} \mathcal{B}_{ij}(G;S)$$
denote the set of all branches of $G$ with respect to $S$.
\end{definition}

If $\beta=v_1,\dots,v_m$ is a branch of $G$ with respect to $S$ and $m>2$ define
\begin{equation}\label{eq0.9}
\mathcal{P}_{\omega}(\beta)=\omega(e_{12})\prod_{i=2}^{m-1}\frac{\omega(e_{i,i+1})}{\lambda-\omega(e_{ii})}.
\end{equation}
For $m=1,2$ let $\mathcal{P}_{\omega}(\beta)=\omega(e_{1m})$. We call $\mathcal{P}_{\omega}(\beta)$ the \textit{branch product} of $\beta$. Note that assumption (ii) in definition \ref{def1} implies that the branch product of any $\beta\in\mathcal{B}_S(G)$ is always defined.

In the procedure we term an \textit{isospectral graph reduction} we replace the branches $\mathcal{B}_{ij}(G;S)$ of a graph with a single edge. The following definition specifies the weights of these edges.

\begin{definition}\label{IR}
Let $G=(V,E,\omega)$ with structural set $S=\{v_1\,\dots,v_m\}$. Define the edge weights
\begin{equation}\label{eq1.0}
\mu(e_{ij})=\begin{cases}
\displaystyle{\sum_{\beta\in\mathcal{B}_{ij}(G;S)}\mathcal{P}_\omega(\beta)} & \text{if} \ \ \ \mathcal{B}_{ij}(G;S)\neq\emptyset\\
\ \ \ \ \ 0 & \text{otherwise}
            \end{cases} \ \ \ \text{for} \ \ \ 1\leq i,j\leq m.
\end{equation}
The graph $\mathcal{R}_S(G)=(S,\mathcal{E},\mu)$ where $e_{ij}\in \mathcal{E}$ if $\mu(e_{ij})\neq 0$
is the \textit{isospectral reduction} of $G$ over $S$.
\end{definition}

\subsection{Sequential Reductions}

As any reduction $\mathcal{R}_S(G)$ of a graph $G\in\mathbb{G}$ is again a graph in $\mathbb{G}$ it is natural to consider sequences of reductions on a graph as well as to what degree a graph can be reduced.

\begin{definition}
For $G=(V,E,\omega)$ suppose the sequence of sets $S_1,\dots,S_m\subseteq V$ are such that $S_1\in st(G)$, $\mathcal{R}_1(G)=\mathcal{R}_{S_1}(G)$ and $$S_{i+1}\in st(\mathcal{R}_i(G)) \ \text{where} \ \mathcal{R}_{S_{i+1}}(\mathcal{R}_i(G))=\mathcal{R}_{i+1}(G), \  1\leq i\leq m-1.$$ If this is the case then we say $S_1,\dots,S_m$ \textit{induces a sequence of reductions} on $G$ with \textit{final vertex} set $S_m$ and we write $\mathcal{R}_i(G)=\mathcal{R}(G;S_1,\dots,S_i)$ for $1\leq i \leq m$.
\end{definition}

\begin{definition}
Let $\mathbb{G}_{\pi}^n\subset\mathbb{G}^n$ be the graphs with weights in the set given by $\{\omega\in\mathbb{W}:\omega=p/q, deg(p)\leq deg(q)\}.$ Furthermore, let $\mathbb{G}_\pi=\bigcup_{n\geq 1}\mathbb{G}^n_\pi$.
\end{definition}

\begin{remark}
It is important to note that any graph $G$ where $M(G)\in\mathbb{C}^{n\times n}$ is a graph in the set $\mathbb{G}^n_\pi$.
\end{remark}

\begin{theorem}\label{theorem-1}
Let $G=(V,E,\omega)$ be in $\mathbb{G}_{\pi}$. Then for any nonempty $\mathcal{V}\subseteq V$ any sequence of reductions on $G$ with final vertex set $\mathcal{V}$ reduces $G$ to the unique graph $\mathcal{R}_{\mathcal{V}}[G]=(\mathcal{V},\mathcal{E},\mu)$. Moreover, at least one such sequence always exists.
\end{theorem}

That is, the final vertex set in a sequence of reductions completely specifies the reduced graph irrespective of the specific sequence of reductions. The notation $\mathcal{R}_\mathcal{V}[G]$ is intended to emphasize the fact that $\mathcal{V}$ need not be a structural set of $G$.

To understand how sequential reductions effect the eigenvalues of a graph (or equivalently matrix) we denote the following. If $G=(V,E,\omega)$ is in $\mathbb{G}_\pi$ where $\mathcal{V}\subseteq V$ let $G|_{\mathcal{V}}$ be the subgraph of $G$ induced over the vertex set $\mathcal{V}$. That is, $$G|_{\mathcal{V}}=(\mathcal{V},\mathcal{E},\mu) \ \ \text{where} \ \ \mathcal{E}=\{e_{ij}\in E: \ v_i,v_j\in \mathcal{V}\} \ \  \text{and} \ \ \mu=\omega|_{\mathcal{E}}.$$

\begin{theorem}\label{frac}
If $G=(V,E,\omega)\in\mathbb{G}_\pi$ where $\mathcal{V}\subseteq V$ is nonempty then
$$\det\big(M(\mathcal{R}_\mathcal{V}[G])-\lambda I\big)=\frac{\det\big(M(G)-\lambda I\big)}{\det\big(M(G|_{\bar{\mathcal{V}}})-\lambda I\big)}.$$
\end{theorem}
\noindent For our purposes, we note that an important interpretation of this theorem is that $\sigma(G)$ and $\sigma(\mathcal{R}_\mathcal{V}[G])$ differ at most by $\sigma(G|_{\bar{\mathcal{V}}})$.

If $A\in\mathbb{C}^{n\times n}$ and $\mathcal{V}\subseteq\{1,\dots,n\}$ is nonempty then let $A|_\mathcal{\bar{V}}$ be the \emph{principle submatrix} of $A$ formed by the rows and columns indexed by $\mathcal{\bar{V}}$. Theorem \ref{frac} then implies
$$\det(A_\mathcal{V}-\lambda I)=\frac{\det(A-\lambda I)}{\det(A|_{\bar{\mathcal{V}}}-\lambda I)}$$
where $A_\mathcal{V}$ is the reduction of $A$ over $\mathcal{V}$ and $\det(A_\mathcal{V}-\lambda I)\in\mathbb{W}$ is the ratio of the characteristic polynomials of $A$ and $A^\mathcal{V}$.

Theorem \ref{frac} also implies the following  useful corollary.

\begin{corollary}\label{theoremnew}
Let $G=(V,E,\omega)$ be a graph in $\mathbb{G}$ and $S\in st(G)$ be a proper subset of $V$. Then
$$\det\big(M(\mathcal{R}_S(G))-\lambda I\big)=\frac{\det\big(M(G)-\lambda I\big)}{\displaystyle{\prod_{v_i\in\bar{S}}(\omega(e_{ii})-\lambda)}}.$$
\end{corollary}

That is, $\sigma(G)$ and $\sigma(\mathcal{R}_S(G))$ differ at most by the set
$$E(G;S)=\{\lambda\in\mathbb{C}:\displaystyle{\prod_{v_i\in\bar{S}}(\omega(e_{ii})-\lambda)}=0\}$$
where this set includes multiplicities. We note that $E(G;S)$ denotes the potential error in estimating $\sigma(G)$ by $\sigma(\mathcal{R}_S(G))$. In particular, if $M(G)\in\mathbb{C}^{n\times n}$ then by reducing $G$ over $S$ we lose any eigenvalues of $M(G)$ which are the weights of the loops $e_{ii}$ for $v_i\in\bar{S}$.

\subsection{Proofs}
In this section we use the following notation. For $G=(V,E,\omega)$ in $\mathbb{G}_{\pi}^n$ and $\mathcal{V}_k=\{v_1,\dots,v_k\}\subset V$ let $M_k=M(\mathcal{R}_{\bar{\mathcal{V}}_k}(G))$ and $M^k=M(G|_{\mathcal{V}_k})$ for $0<k<n$.

For a proof of theorem \ref{frac} we require the following lemma.

\begin{lemma}\label{newlemma20}
For $G\in\mathbb{G}_\pi^n$ where $n>1$, $$\det\big(M(G)-\lambda I\big)=\det\big(M_1-\lambda I\big)\det\big(M^1-\lambda I\big).$$
\end{lemma}

\begin{proof}
If $G\in\mathbb{G}_\pi^{n}$ where $n>1$ then $\bar{\mathcal{V}}_1\in st(G)$. Then lemma \ref{newlemma20} follows from equation (19) of \cite{BW09}.
\end{proof}

A proof of theorem \ref{frac} is the following.

\begin{proof}
For $G=(V,E,\omega)$ in $\mathbb{G}_\pi^n$ let $\mathcal{V}=\mathcal{V}_m$ for some fixed $1\leq m<n$. Denoting $M(G)=M$, lemma \ref{newlemma20} then implies $\det(M-\lambda I)=\det(M_1-\lambda I)\det(M^1-\lambda I)$. Given that the graph corresponding to $M_1$ is in $\mathbb{G}_\pi^{n-1}$ lemma \ref{newlemma20} implies that $$\det\big(M_1-\lambda I\big)=\det\big((M_1)_1-\lambda I\big)\det\big(M_1^1-\lambda I\big).$$ As $(M_1)_1=M_2$ by theorem \ref{theorem-1} then
$\det\big(M_1-\lambda I\big)=\det\big(M_2-\lambda I\big)\det\big(M_1^1-\lambda I\big)$.

By repeated use of both lemma \ref{newlemma20} and theorem \ref{theorem-1} we have
\begin{equation}\label{equation1}
\det(M-\lambda I)=\det(M_m-\lambda I)\prod_{i=1}^m\det\big(M_{i-1}^1-\lambda I\big).
\end{equation} where $M_0=M$.

Denoting $M^m=\tilde{M}$ then, by the same argument, the characteristic equation of the submatrix $M^m$ is given by
\begin{equation}\label{equation2}
\det(\tilde{M}-\lambda I)=\prod_{i=1}^m\det\big(\tilde{M}_{i-1}^1-\lambda I\big).
\end{equation} where $\tilde{M}_0=\tilde{M}$. The claim then is that $\tilde{M}_{i-1}^1=M_{i-1}^1$ for all $1\leq i\leq m$. To verify this we proceed by induction.

First, note that $(M_0)_{jk}=(\tilde{M}_0)_{jk}$ for all $1\leq j,k\leq m$ as $\tilde{M}_0$ is the submatrix of $M_0$ consisting of its first $m$ rows and columns. Therefore, assume that the entries $(M_i)_{jk}=(\tilde{M}_i)_{jk}$ for $1\leq j,k\leq m-i$ and $i<\ell\leq m$. For the case $i=\ell$ it follows from this assumption that
\begin{align}
(M_\ell)_{jk}=(M_{\ell-1})_{j+1,k+1}+\frac{(M_{\ell-1})_{j+1,1}(M_{\ell-1})_{1,k+1}}{\lambda-(M_{\ell-1})_{11}}&=\\(\tilde{M}_{\ell-1})_{j+1,k+1}+\frac{(\tilde{M}_{\ell-1})_{j+1,1}(\tilde{M}_{\ell-1})_{1,k+1}}{\lambda-(\tilde{M}_{\ell-1})_{11}}&=(\tilde{M}_\ell)_{jk}
\end{align} for all $1\leq j,k\leq m-\ell$. Hence, $(M_i)_{jk}=(\tilde{M}_i)_{jk}$ for $1\leq j,k\leq m-i$ and $i\leq m$, verifying the claim that $\tilde{M}_{i-1}^1=M_{i-1}^1$ for all $1\leq i\leq m$.

Given that $\det\big(\tilde{M}_{i-1}^1-\lambda I\big)=\det\big(M_{i-1}^1-\lambda I\big)$ for $1\leq i\leq m$ then equation (\ref{equation1}) together with (\ref{equation2}) imply $\det(M-\lambda I)=\det(M_m-\lambda I)\det(\tilde{M}-\lambda I)$. As $M=M(G)$, $M_m=\mathcal{R}_{\bar{\mathcal{V}}}[G]$, and $\tilde{M}=M(\mathcal{R}^{\mathcal{V}}(G))$ the result follows for the specific set $\mathcal{V}=\mathcal{V}_m$.

To see that this implies the general result of the theorem let $\mathcal{V}$ be any nonempty subset of $V$. By a simple relabeling of the vertices in $\mathcal{V}$ we may write $\mathcal{V}$ as $\mathcal{V}_m$ which completes the proof.
\end{proof}

As the graph $G|_{\bar{S}}$, for any $S\in st(G)$, has only trivial cycles (loops) then each vertex of the graph is its own strongly connected component. Given that the eigenvalues of a graph are the union of the eigenvalues of its strongly connected components
$$\sigma(G|_{\bar{S}})=\{\lambda\in\mathbb{C}:\displaystyle{\prod_{v_i\in\bar{S}}(\omega(e_{ii})-\lambda)}=0\}.$$
This is enough to prove corollary \ref{theoremnew}.

\section{Main Results}
In this section we give the main results of this paper. Specifically, we show that a reduced graph (equivalently reduced matrix) has a smaller Gershgorin and Brauer-type region respectively than the associated unreduced graph. Hence, the eigenvalue estimates given in section 3.1 and 3.2 can be improved via the process of isospectral graph reduction.

However, for both Brualdi and original Brualdi-type regions the situation is more complicated. For certain reductions the Brualdi-type (original Brualdi-type) region of a graph may decrease in size similar to Gershgorin and Brauer-type regions. In other cases the Brualdi-type (original Brualdi-type) region of a graph may increase in size when the graph is reduced. We give an example of both of these possibilities in section 5.3. Following this, we present sufficient conditions under which such estimates improve as the associated graph is reduced (see theorems \ref{impbrualdi} and \ref{bru}).

\subsection{Improving Gershgorin-Type Estimates}
We first consider the effect of reducing a graph on its associated Gershgorin region. Our main result in this direction is the following theorem.
\begin{theorem}{\textbf{(Improved Gershgorin Regions)}}\label{impgersh}
Let $G=(V,E,\omega)$ where $\mathcal{V}$ is any nonempty subset of $V$. If $G\in\mathbb{G}_\pi$ then $\mathcal{BW}_\Gamma(\mathcal{R}_{\mathcal{V}}[G])\subseteq\mathcal{BW}_\Gamma(G)$.
\end{theorem}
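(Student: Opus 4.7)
The plan is to reduce to the case of removing a single vertex and then iterate. By Theorem~\ref{theorem-1} one can interpolate any reduction $\mathcal{R}_{\mathcal{V}}[G]$ by a sequence of structural sets that drops exactly one vertex at each step, so it suffices to prove the single-step statement $\mathcal{BW}_\Gamma(\mathcal{R}_S(G))\subseteq\mathcal{BW}_\Gamma(G)$ when $S=V\setminus\{v_n\}\in st(G)$, and then chain the resulting inclusions.

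In this single-step setting the set $\mathcal{B}_S(G)$ consists only of the direct edges $v_i\to v_j$ and the two-step paths $v_i\to v_n\to v_j$ for $i,j\in S$, so Definition~\ref{reductiondef} yields the explicit formula
\[
\mu(e_{ij})\;=\;\omega(e_{ij})\;+\;\frac{\omega(e_{in})\,\omega(e_{nj})}{\lambda-\omega(e_{nn})}.
\]
A short computation with the polynomial extension (using $\lambda-M(\bar{G},\lambda)_{ii}=L_i(\lambda)\bigl(\lambda-M(G,\lambda)_{ii}\bigr)$ and $r_i(\bar{G},\lambda)=|L_i(\lambda)|\,r_i(G,\lambda)$) shows that off the finitely many points at which some $L_i$ vanishes, the region $\mathcal{BW}_\Gamma(G)_i$ coincides with $\{\lambda:|\lambda-\omega(e_{ii})|\leq r_i(G,\lambda)\}$, and similarly for the reduced graph. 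So I pick $\lambda_0\in\mathcal{BW}_\Gamma(\mathcal{R}_S(G))_i$ away from these exceptional points and work with the rational-function form of the inequality.

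The heart of the argument is to substitute the formula for $\mu(e_{ij})$ into $|\lambda_0-\mu(e_{ii})|\leq r_i(\mathcal{R}_S(G),\lambda_0)$, apply the reverse triangle inequality on the left and the triangle inequality on the right, and use the identities $\sum_{j\in S\setminus\{i\}}|\omega(e_{ij})|=r_i(G,\lambda_0)-|\omega(e_{in})|$ together with its analogue for row $n$. After the $|\omega(e_{in})\omega(e_{ni})|/|\lambda_0-\omega(e_{nn})|$ contributions cancel on both sides, one is left with the clean inequality
\[
|\lambda_0-\omega(e_{ii})|\;\leq\;r_i(G,\lambda_0)\;+\;|\omega(e_{in})|\!\left(\frac{r_n(G,\lambda_0)}{|\lambda_0-\omega(e_{nn})|}-1\right).
\]
A dichotomy then finishes it: if $|\lambda_0-\omega(e_{nn})|\geq r_n(G,\lambda_0)$ the parenthesised correction is non-positive and $\lambda_0\in\mathcal{BW}_\Gamma(G)_i$; otherwise $|\lambda_0-\omega(e_{nn})|<r_n(G,\lambda_0)$ and $\lambda_0\in\mathcal{BW}_\Gamma(G)_n$ directly.

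The main obstacle is the exceptional case $\lambda_0\in\mathcal{N}(G;S)$, where $\lambda_0=\omega(e_{nn})(\lambda_0)$ makes the denominator $\lambda-\omega(e_{nn})$ vanish and the manipulation above is not literally valid. Fortunately at any such $\lambda_0$ one has $|\lambda_0-\omega(e_{nn})(\lambda_0)|=0\leq r_n(G,\lambda_0)$, so the $n$-th Gershgorin disc of $G$ contains $\lambda_0$ automatically; an analogous polynomial-extension remark covers any $\lambda_0$ at which an entry of $M(G)$ already has a pole (there both sides reduce to $0\leq 0$). Because $G\in\mathbb{G}_\pi$ the exceptional set is finite, so this handful of extra points is easy to dispose of, completing the single-step case; the inductive chain along the reduction sequence then yields the full inclusion $\mathcal{BW}_\Gamma(\mathcal{R}_{\mathcal{V}}[G])\subseteq\mathcal{BW}_\Gamma(G)$.
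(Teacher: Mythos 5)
Your proposal is correct and takes essentially the same route as the paper's proof: reduce to removing a single vertex and iterate via Theorem \ref{theorem-1}, write out the explicit one-step reduction formula for $\mu(e_{ij})$, apply the triangle and reverse triangle inequalities, and conclude with the dichotomy that either the correction term is non-positive (so $\lambda_0$ lies in the $i$th Gershgorin region of $G$) or $|\lambda_0-\omega(e_{nn})|<r_n(G,\lambda_0)$ (so $\lambda_0$ lies in the region of the removed vertex). The only difference is cosmetic: you work in the rational-function normalization and quarantine the finitely many degenerate points (poles and $\lambda_0=\omega(e_{nn})(\lambda_0)$, each of which lands in $\mathcal{BW}_\Gamma(G)$ automatically), whereas the paper carries the polynomial multipliers $L_i$, $L_i^1$ through every inequality and handles their vanishing inline.
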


Gershgorin's original theorem can be thought of as estimating the spectrum of a graph by considering the paths of length 1 starting at each vertex. Heuristically, one can view graph reductions as allowing for better estimates by considering longer paths in the graph through those vertices that have been removed.

Theorem \ref{impgersh} together with theorem \ref{frac} have the following corollary.

\begin{corollary}\label{cor2}
If $G=(V,E,\omega)$ where $\mathcal{V}$ is a nonempty subset of $V$ then $$\sigma(G)\subseteq\mathcal{BW}_\Gamma(\mathcal{R}_{\mathcal{V}}[G])\cup \sigma(G|_{\bar{\mathcal{V}}}).$$
\end{corollary}

To understand in which situations $\mathcal{BW}_\Gamma(\mathcal{R}_{\mathcal{V}}[G])$ is strictly contained in $\mathcal{BW}_\Gamma(G)$ we consider the following. For $G\in\mathbb{G}^n_\pi$ let
\begin{equation*}
\partial\mathcal{BW}_\Gamma(G)_i=\{\lambda\in\mathbb{C}:|\lambda-M(\bar{G})_{ii}|=r_i(\bar{G},\lambda)\} \ \ \text{for} \ \ 1\leq i\leq n.
\end{equation*}
We note here that the topological boundary of the region $\mathcal{BW}_\Gamma(G)_i$ in the complex plane is contained in the set $\partial\mathcal{BW}_\Gamma(G)_i$ for each $1\leq i\leq n$. This follows from the continuity of $|\lambda-M(\bar{G})_{ii}|-r_i(\bar{G})$ in the variable $\lambda$.
However, if $\lambda\in\partial\mathcal{BW}_\Gamma(G)_i$ it may be the case that $\lambda$ is contained in a neighborhood entirely within $\mathcal{BW}_\Gamma(G)_i$ or $\lambda$ is not on the topological boundary of $\partial\mathcal{BW}_\Gamma(G)_i$. Hence, the topological boundary of $\mathcal{BW}_\Gamma(G)_i$ is contained in $\partial\mathcal{BW}_\Gamma(G)_i$ but this containment may not be strict.

\begin{theorem}\label{theorem8}
Let $G=(V,E,\omega)\in\mathbb{G}_{\pi}^n$. Suppose the subset $$\partial\mathcal{BW}_\Gamma(G)_i\setminus\bigcup_{j=1, j\neq i}^n\mathcal{BW}_\Gamma(G)_j$$
is an infinite set of points. Then $\mathcal{BW}_\Gamma(\mathcal{R}_{\mathcal{V}}[G])\subset\mathcal{BW}_\Gamma(G)$ for any $\mathcal{V}\subset V$ if $v_i\notin\mathcal{V}$.
\end{theorem}

For $G\in\mathbb{G}_{\pi}$ there is typically some region $\mathcal{BW}_\Gamma(G)_i$ whose boundary is not contained in the union of the other $j$th Gershgorin regions. In the nonstandard case this boundary can be a finite set of isolated points but otherwise, removing $v_i$ strictly improves the estimates given by Gershgorin-type regions.

\begin{figure}
\begin{tabular}{ccc}
    \begin{overpic}[scale=.37]{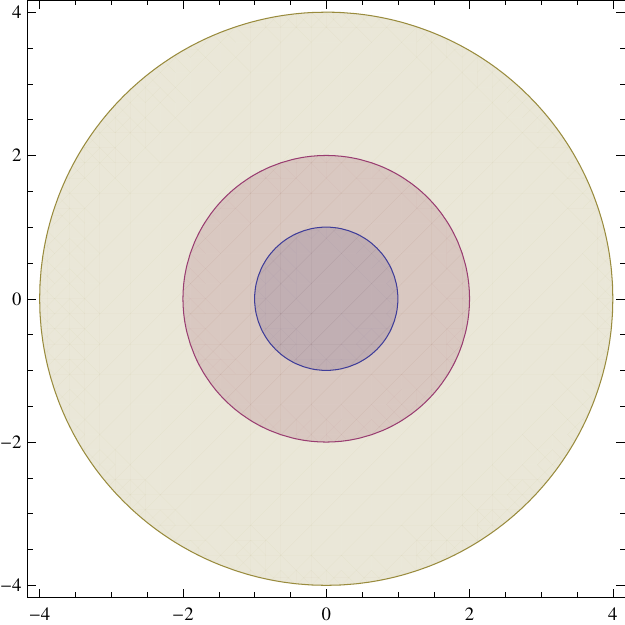}
    \put(22,50){-1 \hspace{0.035in} $\bullet$}
    \put(72.5,50){$\bullet$ 2}
    \put(50,39){$\bullet$}
    \put(45,27){$-i$}
    \put(50,61){$\bullet$}
    \put(52,71){$i$}
    \end{overpic} &
    \begin{overpic}[scale=.37]{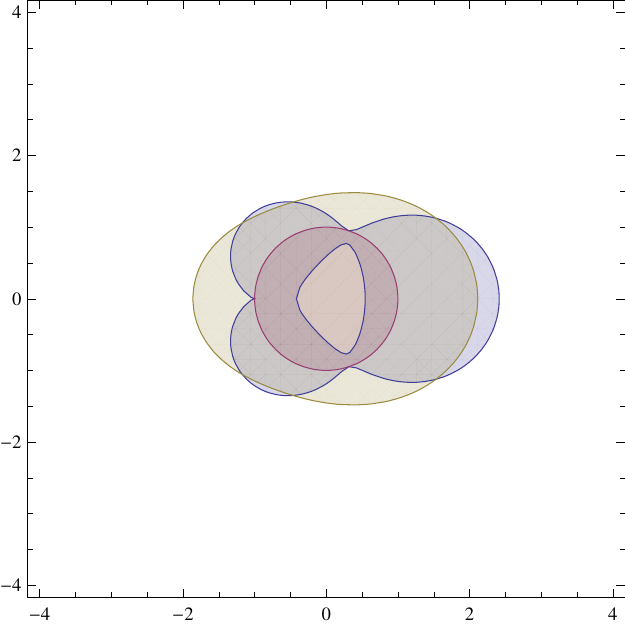}
    \put(24,50){-1 \hspace{0.025in} $\bullet$}
    \put(72.5,50){$\bullet$ 2}
    \put(50,39){$\bullet$}
    \put(45,27){$-i$}
    \put(50,61){$\bullet$}
    \put(52,71){$i$}
    \end{overpic} &
    \begin{overpic}[scale=.37]{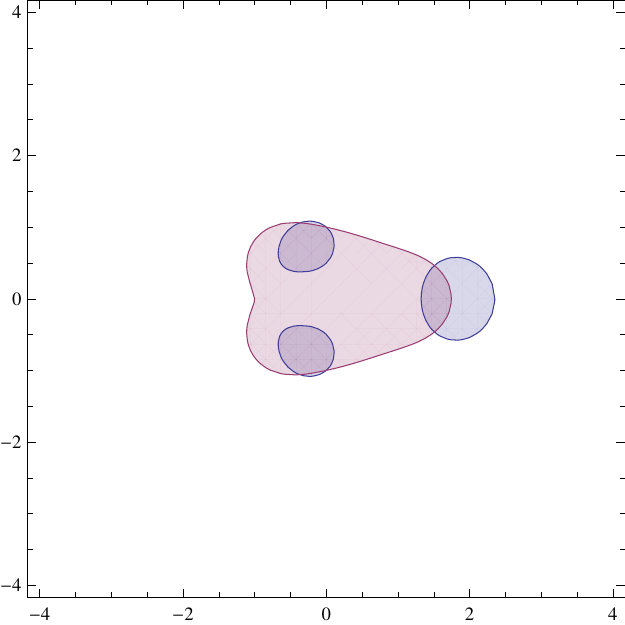}
    \put(27.5,50){-1 $\bullet$}
    \put(72.5,50){$\bullet$ 2}
    \put(50,39){$\bullet$}
    \put(45,31){$-i$}
    \put(50,61){$\bullet$}
    \put(52,67){$i$}
    \end{overpic}
\end{tabular}
\caption{Left: $\mathcal{BW}_\Gamma(\mathcal{G}_0)$. Middle: $\mathcal{BW}_\Gamma(\mathcal{G}_1)$. Right: $\mathcal{BW}_\Gamma(\mathcal{G}_2)$, where in each the spectrum $\sigma(\mathcal{G}_0)=\{-1,-1,-i,i,2\}$ is indicated.}
\end{figure}

As an example consider the graph $\mathcal{G}_0\in\mathbb{G}_\pi$ with adjacency matrix
$$M(\mathcal{G}_0)=\left[ \begin{array}{ccccc}
0&0&1&0&1\\
0&0&0&1&1\\
0&1&0&0&0\\
1&0&0&0&0\\
1&1&1&1&0
\end{array}
\right].$$
If $\mathcal{G}_1=\mathcal{R}_{\{v_1,v_2,v_3\}}[\mathcal{G}_0]$ and $\mathcal{G}_2=\mathcal{R}_{\{v_1,v_2\}}[\mathcal{G}_1]$ then one computes
\begin{equation}\label{last}
M(\mathcal{G}_1)=\left[ \begin{array}{ccc}
\frac{\lambda+1}{\lambda^2} & \frac{1}{\lambda} & \frac{\lambda+1}{\lambda}\\
\frac{2\lambda+1}{\lambda^2} & \frac{1}{\lambda} & \frac{1}{\lambda}\\
0 & 1 & 0\\
\end{array}
\right] \  \ \text{and} \ \ M(\mathcal{G}_2)=\left[ \begin{array}{cc}
\frac{\lambda+1}{\lambda^2} & \frac{2\lambda+1}{\lambda^2}\\
\frac{2\lambda+1}{\lambda^2} & \frac{\lambda+1}{\lambda^2}\\
\end{array}
\right].
\end{equation}
The Gershgorin regions of $\mathcal{G}_0,$ $\mathcal{G}_1$, and $\mathcal{G}_2$ are shown in figure 4. As $$\partial\mathcal{BW}_\Gamma(\mathcal{G}_0)_5\setminus\bigcup_{j=1}^4\mathcal{BW}_\Gamma(\mathcal{G}_0)_j \ \ \text{and} \ \ \partial\mathcal{BW}_\Gamma(\mathcal{G}_1)_3\setminus\bigcup_{j=1}^2\mathcal{BW}_\Gamma(\mathcal{G}_1)_j$$ consist of curves in $\mathbb{C}$ this, as can be seen in the figure, implies the strict inclusions
$$\mathcal{BW}_\Gamma(\mathcal{G}_2)\subset\mathcal{BW}_\Gamma(\mathcal{G}_1)\subset\mathcal{BW}_\Gamma(\mathcal{G}_0).$$

In addition, if $\mathcal{G}^1=\mathcal{G}_0|_{\{v_4,v_5\}}$ and $\mathcal{G}^2=\mathcal{G}_0|_{\{v_3,v_4,v_5\}}$ then
$$M(\mathcal{G}^1)=\left[ \begin{array}{cc}
0 & 0\\
0 & 1
\end{array}
\right] \  \ \text{and} \ \ M(\mathcal{G}^2)=\left[ \begin{array}{ccc}
0&0&0\\
0&0&0\\
1&1&0
\end{array}
\right].$$
Hence, $\sigma(\mathcal{G}^1)=\sigma(\mathcal{G}^2)=\{0\}$ (not including multiplicities). As $\{0\}$ is contained in both $\mathcal{BW}_\Gamma(\mathcal{G}_1)$ and $\mathcal{BW}_\Gamma(\mathcal{G}_2)$ then both $\mathcal{BW}_\Gamma(\mathcal{G}_1)$ and $\mathcal{BW}_\Gamma(\mathcal{G}_2)$ contain $\sigma(\mathcal{G}_0)$ by corollary \ref{cor2}. (Note $M(\mathcal{G}_1)=M(\mathcal{G})$ where $M(\mathcal{G})$ is previously given by (\ref{matrix}).)

Also, an important implication of theorem \ref{impgersh} is that graph reductions on some $G\in\mathbb{G}_\pi$ can be used to obtain estimates of $\sigma(G)$ with increasing precision depending on how much one is willing to reduce the graph $G$.

With this in mind, suppose $v\in V$ is a vertex of $G\in\mathbb{G}_\pi^n$. Then the graph $\mathcal{R}_{\{v\}}[G]=(\{v\},\mathcal{E},\mu)$ consists of a single vertex $v$ and possibly a loop. We note that this is the furthest extent to which $G$ may be reduced. Moreover, the region $\mathcal{BW}_\Gamma(\mathcal{R}_{\{v\}}[G])=\sigma(\mathcal{R}_{\{v\}}[G])$ is a finite set of points in the complex plane. As $\sigma(G|_{\{V\setminus v\}})$ consists of at most $n-1$ points in the complex plane this can be summarized as follows.

\begin{figure}
\begin{tabular}{ccc}
    \begin{overpic}[scale=.37]{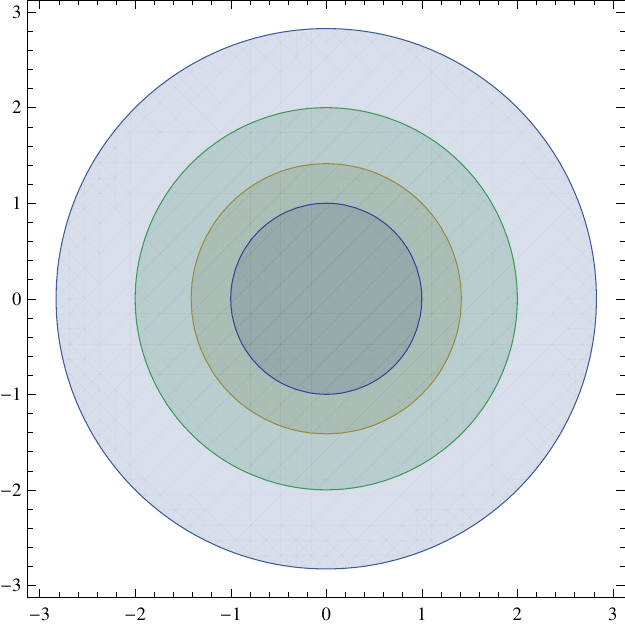}
    \put(22,50){-1 \hspace{0.005in} $\bullet$}
    \put(80.5,50){$\bullet$ 2}
    \put(50,35){$\bullet$}
    \put(46,26){$-i$}
    \put(50,65){$\bullet$}
    \put(53,73){$i$}
    \end{overpic} &
    \begin{overpic}[scale=.37]{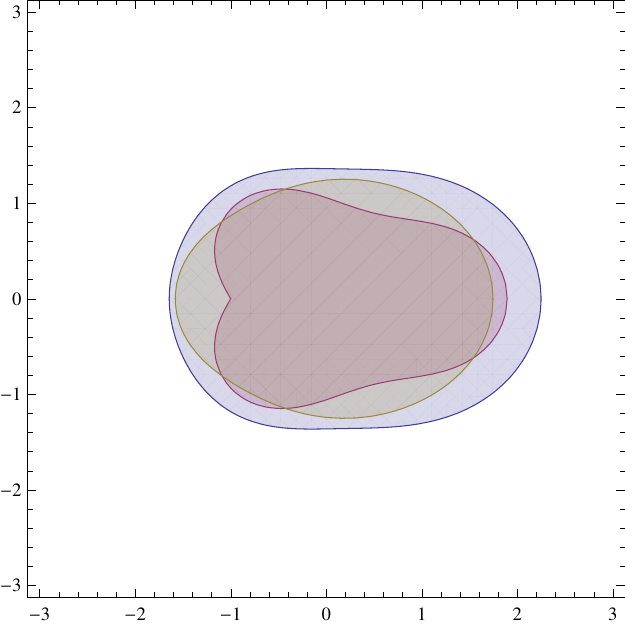}
    \put(20,50){-1 \hspace{0.025in} $\bullet$}
    \put(80.5,50){$\bullet$ 2}
    \put(50,35){$\bullet$}
    \put(46,26){$-i$}
    \put(50,65){$\bullet$}
    \put(53,73){$i$}
    \end{overpic} &
    \begin{overpic}[scale=.37]{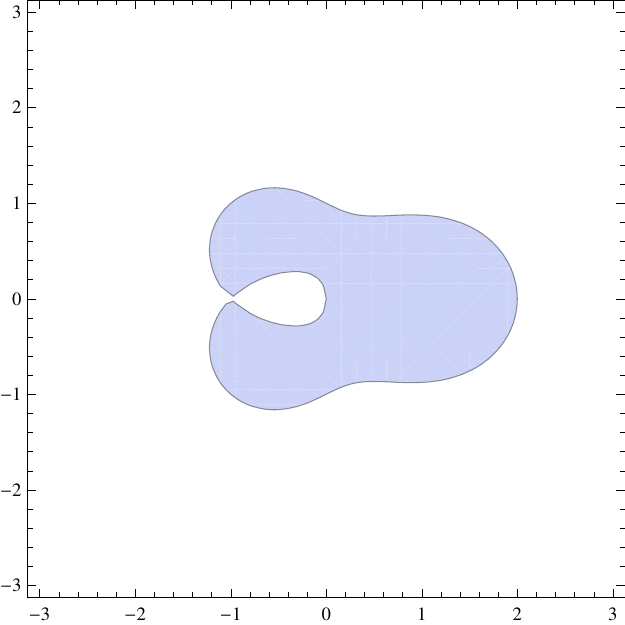}
    \put(24.5,50){-1 $\bullet$}
    \put(80.5,50){$\bullet$ 2}
    \put(50,35){$\bullet$}
    \put(46,26){$-i$}
    \put(50,65){$\bullet$}
    \put(53,73){$i$}
    \end{overpic}
\end{tabular}
\caption{Left: $\mathcal{BW}_\mathcal{K}(\mathcal{G}_0)$. Middle: $\mathcal{BW}_\mathcal{K}(\mathcal{G}_1)$. Right: $\mathcal{BW}_\mathcal{K}(\mathcal{G}_2)$, where in each the spectrum $\sigma(\mathcal{G}_0)=\{-1,-1,-i,i,2\}$ is indicated.}
\end{figure}

\begin{remark}
If the graph $G=(V,E,\omega)$ is in $\mathbb{G}^n_\pi$ and $v$ is any vertex in $V$ then $\sigma(G)$ is contained in the finite set of points $\sigma(\mathcal{R}_{\{V\setminus v\}}[G])\cup\sigma(\mathcal{R}^{\{V\setminus v\}}(G))$. Furthermore, $\sigma(G)$ and $\sigma(\mathcal{R}_{\{V\setminus v\}}[G])$ differ at most by the set $\sigma(\mathcal{R}^{\{V\setminus v\}}(G))$ which contains less than $n$ points.
\end{remark}

As an example, let $\mathcal{G}_3=\mathcal{R}_{\{v_1\}}[\mathcal{G}_0]$ and $\mathcal{G}^3=\mathcal{G}_0|_{\{v_2,v_3,v_4,v_5\}}$. Then it follows that $\sigma(\mathcal{G}_3)=\{-1,-1,-i,i,2\}$ and $\sigma(\mathcal{G}^3)=\{0,1.3247,-.6623\pm0.5622i\}$. Corollary \ref{cor2} then implies $\sigma(\mathcal{G}_0)\subseteq\{-1,-i,i,2,0,1.3247,-.6623\pm0.5622i\}$. We note that in this particular case $\sigma(\mathcal{G}_0)=\sigma(\mathcal{G}_3)$ or the spectrum of the reduced graph and the original are exactly the same.

\subsection{Improving Brauer-Type Estimates}
We now consider Brauer-type regions for which we give similar results.

\begin{theorem}\label{impbrauer}{\textbf{(Improved Brauer Regions)}}
Let $G=(V,E,\omega)$. If $G\in\mathbb{G}_\pi$ where $\mathcal{V}\subseteq V$ contains at least two vertices, then $\mathcal{BW}_\mathcal{K}(\mathcal{R}_{\mathcal{V}}[G])\subseteq\mathcal{BW}_\mathcal{K}(G)$.
\end{theorem}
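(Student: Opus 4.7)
The plan is to reduce to the one-vertex case and then compare Brauer-type quantities on the two graphs using a Schur-complement identity together with the triangle inequality.

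By Theorem~\ref{theorem-1}, the graph $\mathcal{R}_\mathcal{V}[G]$ depends only on $\mathcal{V}$ and not on the intermediate reduction sequence, so I may realize it as a sequence of single-vertex reductions and induct on $|V|-|\mathcal{V}|$. Since $|\mathcal{V}|\geq 2$, it suffices to prove $\mathcal{BW}_\mathcal{K}(H)\subseteq\mathcal{BW}_\mathcal{K}(G)$ for $H=\mathcal{R}_{V\setminus\{v_k\}}(G)$, with $v_k\in V$ and $|V|\geq 3$; the hypothesis $G\in\mathbb{G}_\pi$ ensures $V\setminus\{v_k\}$ is a structural set, as $\pi(M(G)_{kk})\leq 0$ rules out $M(G)_{kk}=\lambda$.

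With $\bar{S}=\{v_k\}$, for $i,j\neq k$ the only branches from $v_i$ to $v_j$ are the direct edge $e_{ij}$ and, at most, the 2-step path $v_i\to v_k\to v_j$, so Definition~\ref{reductiondef} gives
$$M(H,\lambda)_{ij}=M(G,\lambda)_{ij}+\frac{M(G,\lambda)_{ik}M(G,\lambda)_{kj}}{\lambda-M(G,\lambda)_{kk}},\qquad i,j\neq k,$$
together with the Schur-complement identity
$$(\lambda-M(H)_{ii})(\lambda-M(G)_{kk})=(\lambda-M(G)_{ii})(\lambda-M(G)_{kk})-M(G)_{ik}M(G)_{ki}.$$
Exactly as in the proof of Theorem~\ref{theorem2}, factoring the common polynomial $L_i(G,\lambda)$ out of the $i$-th row of $M(\bar G)$ shows that, outside the finite zero set of $L_iL_j$, $\lambda\in\mathcal{BW}_\mathcal{K}(G)_{ij}$ is equivalent to $|\lambda-M(G)_{ii}||\lambda-M(G)_{jj}|\leq r_i(G,\lambda)r_j(G,\lambda)$, and similarly for $H$.

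Given $\lambda\in\mathcal{BW}_\mathcal{K}(H)_{ij}$ with $i,j\neq k$, I would show by contradiction that $\lambda$ lies in at least one of $\mathcal{BW}_\mathcal{K}(G)_{ij}$, $\mathcal{BW}_\mathcal{K}(G)_{ik}$, or $\mathcal{BW}_\mathcal{K}(G)_{jk}$. Writing $\mu=|\lambda-M(G)_{kk}|$, $A_i=|\lambda-M(G)_{ii}|$, $R_i=r_i(G,\lambda)$, $a_i=|M(G)_{ik}|$, $c_i=|M(G)_{ki}|$ (and analogously for $j$), the Schur identity combined with the reverse triangle inequality yields
$$\mu\,|\lambda-M(H)_{ii}|\;\geq\;\mu A_i-a_ic_i,$$
while the formula for $M(H)_{ij}$ together with the triangle inequality yields
$$\mu\,r_i(H,\lambda)\;\leq\;\mu(R_i-a_i)+a_i(R_k-c_i),$$
and analogously for $j$. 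Multiplying together and feeding in the three negated Brauer inequalities ($A_iA_j>R_iR_j$, $\mu A_i>R_iR_k$, $\mu A_j>R_jR_k$) should force $|\lambda-M(H)_{ii}||\lambda-M(H)_{jj}|>r_i(H)r_j(H)$, contradicting $\lambda\in\mathcal{BW}_\mathcal{K}(H)_{ij}$.

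The main obstacle will be the final algebraic step. It reduces to a quartic polynomial inequality in the non-negative quantities $\mu$, $A_i$, $A_j$, $R_i$, $R_j$, $R_k$, $a_i$, $a_j$, $c_i$, $c_j$, subject to the constraints $a_i\leq R_i$, $c_i\leq R_k$ (and analogously for $j$), the three strict Brauer-violation inequalities, and positivity. Careful casework — for instance, on the sign of $R_k-\mu$, and on whether $A_i\leq R_i$ or $A_i>R_i$ (in the latter case the argument simplifies, while the former forces $\mu>R_k$ via the second negated inequality, constraining the remaining terms) — together with the structural bound $a_ic_i\leq R_iR_k$ should suffice. A secondary technical point is the handling of $\lambda$ at which $L_i(G,\lambda)=0$ or $\lambda-M(G)_{kk}=0$; since $G\in\mathbb{G}_\pi$, these form a finite set and can be dealt with directly or by continuity of the sets defining $\mathcal{BW}_\mathcal{K}(\cdot)$.
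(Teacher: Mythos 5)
Your framework is exactly the paper's: reduce to a single-vertex reduction via Theorem~\ref{theorem-1}, express $M(H)$ by the branch (Schur-complement) formula, and show $\mathcal{BW}_\mathcal{K}(H)_{ij}\subseteq\mathcal{BW}_\mathcal{K}(G)_{ij}\cup\mathcal{BW}_\mathcal{K}(G)_{ik}\cup\mathcal{BW}_\mathcal{K}(G)_{jk}$ by contradiction; your two displayed estimates (the reverse triangle inequality for $\mu|\lambda-M(H)_{ii}|$ and the upper bound $\mu r_i(H,\lambda)\le\mu(R_i-a_i)+a_i(R_k-c_i)$) are both correct. The gap is that the entire content of the theorem lives in the step you defer with ``careful casework \dots should suffice.'' You never verify that the three negated Brauer inequalities force $P_iP_j>Q_iQ_j$, where $P_i=\mu A_i-a_ic_i$ and $Q_i=\mu(R_i-a_i)+a_i(R_k-c_i)$; this is precisely where the paper spends almost all of its effort (first deriving $|\lambda_{11}L_1|>R_1(G)$, then $|\lambda_{ii}L_i|>R_i(G)$ and $|\lambda_{jj}L_j|>R_j(G)$, and only then reaching the contradiction), and a quartic inequality in ten constrained nonnegative variables is not something one may wave at. For the record, your inequality does close, and more cleanly than the case split you sketch: since $P_i-Q_i=\mu(A_i-R_i)+a_i(\mu-R_k)$ and $\mu A_i>R_iR_k$, one gets $P_i-Q_i>(R_i-a_i)(R_k-\mu)$, so if $\mu\le R_k$ then $P_i>Q_i\ge0$ for both indices and the product inequality is immediate; if $\mu>R_k$, then $P_i>0$ (from $a_ic_i\le R_iR_k<\mu A_i$) and a one-line derivative computation using $c_i\le R_k$ and $A_i>R_iR_k/\mu$ shows $Q_i/P_i$ is nonincreasing in $a_i$ on $[0,R_i]$, hence bounded by its value $R_i/A_i$ at $a_i=0$; then $A_iA_j>R_iR_j$ gives $Q_iQ_j<P_iP_j$. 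Something of this sort must appear for the argument to be a proof.

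A secondary but genuine problem is your treatment of the finitely many $\lambda$ with $L_i(G,\lambda)=0$ or $\lambda=M(G,\lambda)_{kk}$ ``by continuity.'' The sets $\mathcal{BW}_\mathcal{K}(\cdot)_{ij}$ are Cassini-type ovals that can have isolated points (when $r_i(\bar G,\lambda)r_j(\bar G,\lambda)=0$ the defining inequality degenerates to an equality), so a point of $\mathcal{BW}_\mathcal{K}(H)$ lying in the exceptional set need not be approximable by points where your uncleared equivalence is valid, and the containment cannot be recovered by taking closures. The paper avoids this by carrying the polynomial factors $L_i$, $L_i^1$ through every inequality and disposing of their zeros explicitly, showing each such degenerate $\lambda$ lands directly in one of the Cassini ovals of $G$; you would need to do the same rather than appeal to continuity.
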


Theorem \ref{impbrauer} has the following corollary.

\begin{corollary}
If $G=(V,E,\omega)$ where $\mathcal{V}\subseteq V$ contains at least two vertices then $$\sigma(G)\subseteq\mathcal{BW}_\mathcal{K}(\mathcal{R}_{\mathcal{V}}[G])\cup \sigma(G|_{\bar{\mathcal{V}}}).$$
\end{corollary}

Continuing our example, the Brauer-type regions of $\mathcal{G}_0,\mathcal{G}_1$, and $\mathcal{G}_2$ are shown in figure 5 where by theorem \ref{impbrauer}, $\mathcal{BW}_\mathcal{K}(\mathcal{G}_2)\subseteq\mathcal{BW}_\mathcal{K}(\mathcal{G}_1)\subseteq\mathcal{BW}_\mathcal{K}(\mathcal{G}_0)$. Moreover, theorem \ref{theorem4} implies $\mathcal{BW}_{\mathcal{K}}(\mathcal{G}_i)\subseteq\mathcal{BW}_{\Gamma}(\mathcal{G}_i)$ for $i=0,1,2$.

We note that if a graph is reduced from $n$ to $m$ vertices then there are $\binom{n}{2}-\binom{m}{2}$ less $ij$th Brauer-type regions to calculate. Hence, the number of regions quickly decrease as a graph is reduced.

\subsection{Brualdi-Type Estimates}
Continuing on to Brualdi-type regions we note that in the example we have been considering it happens that we have the inclusions $\mathcal{BW}_B(\mathcal{G}_2)\subseteq\mathcal{BW}_B(\mathcal{G}_1)\subseteq\mathcal{BW}_B(\mathcal{G}_0)$ (see figure 6). However, it is not always the case that reducing a graph will improve its Brualdi-type region.

For example, consider the following graph $\mathcal{H}\in\mathbb{G}_\pi$ given in figure 7. If $\mathcal{H}$ is reduced over the sets $\mathcal{S}=\{v_2,v_3,v_4\}$ and $\mathcal{T}=\{v_1,v_2,v_3\}$ then
$$M(\mathcal{R}_{\mathcal{S}}(\mathcal{H}))=\left[ \begin{array}{ccc}
\frac{1}{\lambda} & \frac{1}{10} & 0 \\
\frac{10}{\lambda} & 0 & 1  \\
0 & 1 & 0 \\
\end{array}
\right] \ \ \text{and} \ \
M(\mathcal{R}_{\mathcal{T}}(\mathcal{H}))=\left[ \begin{array}{ccc}
\frac{1}{\lambda} & \frac{1}{\lambda} & 0\\
1 & 0 & 1 \\
0 & 1 & 0 \\
\end{array}
\right].
$$

In this example we have the strict inclusions (see figure 7) $$\mathcal{BW}_B(\mathcal{R}_{\mathcal{T}}(\mathcal{H}))\subset\mathcal{BW}_B(\mathcal{H})\subset\mathcal{BW}_B(\mathcal{R}_{\mathcal{S}}(\mathcal{H})).$$
In particular, as $\mathcal{BW}_B(\mathcal{H})\subset\mathcal{BW}_B(\mathcal{R}_{\mathcal{S}}(\mathcal{H}))$ then  reducing the graph $\mathcal{H}$ over $\mathcal{S}$ increases the size of its Brualdi-type region. That is, graph reductions do not always improve Brualdi-type estimates.

In order to give a sufficient condition under which a Brualdi-type region shrinks as the graph is reduced we require the following definitions. First, let $G=(V,E,\omega)$ where $V=\{v_1,\dots,v_n\}$ for some $n\geq 1$ and where $G$ has strongly connected components $\mathbb{S}_1(G),\dots,\mathbb{S}_m(G)$. Define $$E^{scc}=\{e\in E: e\in\mathbb{S}_i(G),1\leq i\leq m\}.$$ The cycle $\gamma\in C(G)$ is said to \textit{adjacent} to $v_i\in V$ if $v_i\notin\gamma$ and there is some vertex $v_j\in\gamma$ such that $e_{ji}\in E^{scc}$.

Second, for any $v_i\in V$ we denote $$\mathcal{A}(v_i,G)=\{\gamma\in C(G):\gamma \ \text{is adjacent to} \ v_i\}.$$ Moreover, if $C(v_i,G)=\{\gamma\in C(G):v_i\in\gamma\}$ then let $\mathcal{S}(v_i,G)\subseteq C(v_i,G)$ be the set containing the following cycles.

\begin{figure}
\begin{tabular}{ccc}
    \begin{overpic}[scale=.37]{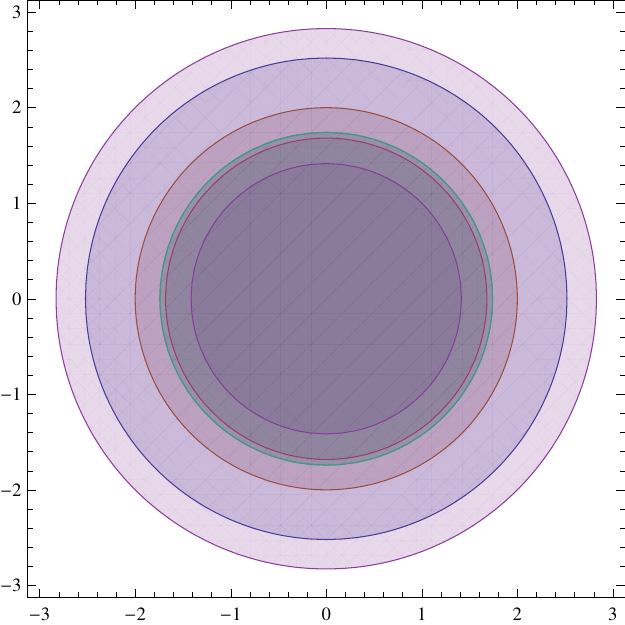}
    \put(35,50){$\bullet$ -1}
    \put(80.5,50){$\bullet$}
    \put(84.5,50){2}
    \put(50,35){$\bullet$ $-i$}
    \put(50,65){$\bullet$ $i$}
    \end{overpic} &
    \begin{overpic}[scale=.37]{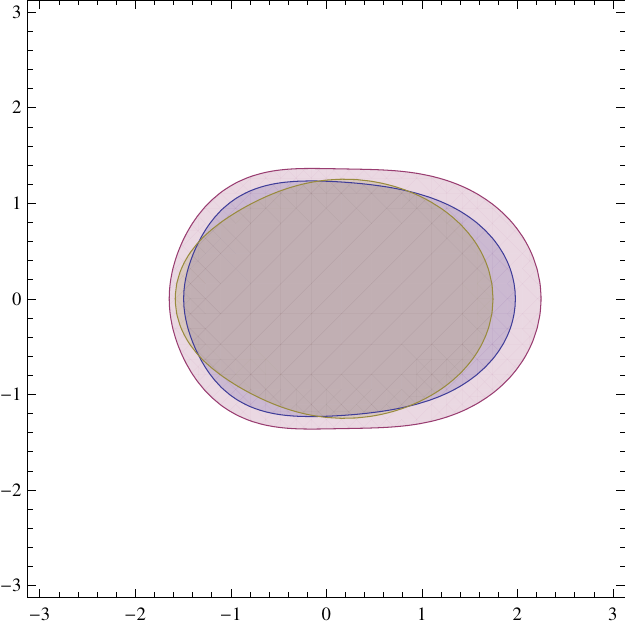}
    \put(35,50){$\bullet$ -1}
    \put(80.5,50){$\bullet$ 2}
    \put(50,35){$\bullet$}
    \put(46,26){$-i$}
    \put(50,65){$\bullet$}
    \put(53,73){$i$}
    \end{overpic} &
    \begin{overpic}[scale=.37]{gershex9.pdf}
    \put(24,50){-1 $\bullet$}
    \put(80.5,50){$\bullet$ 2}
    \put(50,35){$\bullet$}
    \put(46,26){$-i$}
    \put(50,65){$\bullet$}
    \put(53,73){$i$}
    \end{overpic}
\end{tabular}
\caption{Left: $\mathcal{BW}_B(\mathcal{G}_0)$. Middle: $\mathcal{BW}_B(\mathcal{G}_1)$. Right: $\mathcal{BW}_B(\mathcal{G}_2)$, where in each the spectrum $\sigma(\mathcal{G}_0)=\{-1,-1,-i,i,2\}$ is indicated.}
\end{figure}

For $G\in\mathbb{G}_{\pi}^n$ and fixed $1\leq i\leq n$, let $\gamma=\{v_{\alpha_1},\dots,v_{\alpha_m}\}$ be a cycle in $C(v_i,G)$ where $n\geq m\geq 1$ and $v_i=v_{\alpha_1}$. If $m=1$, that is $\gamma=\{v_i\}$, then $\gamma\in\mathcal{S}(v_i,G)$. Otherwise, supposing $1<m\leq n$ relabel the vertices of $G$ such that $v_{\alpha_j}$ is $v_j$ for $1\leq j\leq m$ and denote this relabeled graph by $G_r=(V_r,E_r,\omega_r)$. Then $\gamma\in\mathcal{S}(v_i,G)$ if $e_{j1}\notin E_r$ for $1<j<m$ and $e_{mk}\notin E_r^{scc}$ for $m<k\leq n$.

As it will be needed later, we furthermore define the set $\mathcal{S}_{br}(v_i,G)$ to be the set of cycles in $\mathcal{S}(v_i,G)$ where $\gamma\in\mathcal{S}_{br}(v_i,G)$ if $e_{j1}\notin E_r$ for $1<j<m$ and $e_{mk}\notin E_r$ for $m<k\leq n$.

With this in place we state the following theorem.

\begin{theorem}\label{impbrualdi}{\textbf{(Improved Brualdi Regions)}}
Let $G=(V,E,\omega)$ where $G\in\mathbb{G}_\pi$ and $V$ contains at least two vertices. If $v\in V$ such that both $\mathcal{A}(v,G)=\emptyset$ and $C(v,G)=\mathcal{S}(v,G)$ then $\mathcal{BW}_B(\mathcal{R}_{V\setminus v}(G))\subseteq\mathcal{BW}_B(G)$.
\end{theorem}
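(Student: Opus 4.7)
The overall plan is to fix $\lambda_0 \in \mathcal{BW}_B(\mathcal{R}_{V\setminus\{v\}}(G))$ and construct an explicit cycle $\gamma \in C(\bar{G})$ witnessing that $\lambda_0 \in \mathcal{BW}_B(G)$. Write $H = \mathcal{R}_{V\setminus\{v\}}(G)$ and pick a cycle $\gamma' \in C(\bar{H})$ whose Brualdi inequality is satisfied at $\lambda_0$. The strategy is then in two parts: (i) identify the cycle $\gamma$ of $\bar{G}$ canonically associated to $\gamma'$, and (ii) compare the per-vertex factors in the two Brualdi inequalities.

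The first step is to make the structure of $H$ explicit. Because the structural set $S = V\setminus\{v\}$ differs from $V$ by a single vertex, every branch $\beta \in \mathcal{B}_{ij}(G;S)$ is either the single edge $e_{ij}$ of $G$ or the two-step path $v_i \to v \to v_j$, giving the identity
$$M(H,\lambda)_{ij} = \omega(e_{ij}) + \frac{\omega(e_{iv})\,\omega(e_{vj})}{\lambda - \omega(e_{vv})}.$$
The hypothesis $\mathcal{A}(v,G) = \emptyset$ forbids any edge $e_{jv} \in E^{scc}$ from a cycle of $G$ not containing $v$; I will use this to show that the strongly connected components of $H$ are exactly those of $G$ restricted to $V\setminus\{v\}$, so that the restricted row sums $\tilde{r}_i(\bar{H},\lambda)$ pick up no spurious off-diagonal contributions. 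The hypothesis $C(v,G) = \mathcal{S}(v,G)$ forces every cycle of $G$ through $v$ to have the ``chord-free'' shape prescribed by the definition of $\mathcal{S}$; this is what guarantees that the natural lift of $\gamma'$ to $G$ (obtained by inserting $v$ at each step where the corresponding edge of $\gamma'$ is forced to pass through $v$) is a single, simple cycle $\gamma$ in $\bar{G}$, rather than a walk that revisits $v$.

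Having fixed the correspondence $\gamma' \rightsquigarrow \gamma$, the next step is to verify the Brualdi inequality for $\gamma$ in $\bar{G}$ from the one for $\gamma'$ in $\bar{H}$. For each $v_i \in \gamma'$, I apply the triangle inequality to the expansion of $M(H)_{ij}$ above, obtaining (after clearing the polynomial denominators coming from the passage to $\bar{H}$) inequalities of the form
$$\tilde{r}_i(\bar{H},\lambda_0) \le A_i + \frac{|\omega(e_{iv})|\cdot B_i}{|\lambda_0 - \omega(e_{vv})|},$$
where $A_i$ and $B_i$ are row-sum-like quantities for $v_i$ and $v$ in $\bar{G}$, together with an analogous bound for $|\lambda_0 - M(\bar{H},\lambda_0)_{ii}|$. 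Taking the product of these bounds around the cycle $\gamma'$ and combining them with the additional factor at the inserted vertex $v$, the extra $|\lambda_0 - \omega(e_{vv})|$ terms on the two sides cancel, and what remains is exactly the Brualdi inequality for $\gamma$ in $\bar{G}$.

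The main obstacle will be keeping the polynomial extensions $\bar{G}$ and $\bar{H}$ aligned: the normalizers $L_i(G,\lambda)$ and $L_i(H,\lambda)$ are different products of denominators, so one must track carefully how they cancel when transferring an inequality from the $\bar{H}$-side to the $\bar{G}$-side. A secondary subtlety is the case in which $\gamma'$ is a weak cycle $\{v_i\}$ of $\bar{H}$, where the Brualdi inequality degenerates to $\lambda_0 = M(\bar{H},\lambda_0)_{ii}$; here one must use $\mathcal{A}(v,G) = \emptyset$ to argue that $\{v_i\}$ is either a weak cycle of $\bar{G}$ as well, or that $v_i$ lies on a strong cycle of $\bar{G}$ through $v$ whose Brualdi inequality collapses in the same way by an explicit computation with the formula for $M(H)_{ii}$.
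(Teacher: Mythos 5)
There is a genuine gap in step (i), the construction of the lifted cycle $\gamma$. You claim that the hypothesis $C(v,G)=\mathcal{S}(v,G)$ guarantees that the natural lift of a cycle $\gamma'$ of $H=\mathcal{R}_{V\setminus\{v\}}(G)$ (inserting $v$ wherever an edge of $\gamma'$ arises from a two-step branch through $v$) is a single simple cycle of $G$ rather than a closed walk revisiting $v$. This is false: nothing in the hypotheses prevents a cycle of the reduced graph from using two or more reduction-created edges, in which case the lift revisits $v$ and is not a cycle of $G$ at all. The paper's own example in section 5.3 exhibits exactly this under the theorem's hypotheses: for the graph $P$ with $v=v_5$ one has $\mathcal{A}(v_5,P)=\emptyset$ and $C(v_5,P)=\mathcal{S}(v_5,P)$, yet $\mathcal{R}_U(P)$ acquires the cycle $\{v_1,v_2,v_3,v_4\}$, whose only preimage in $P$ is the closed walk $v_1,v_2,v_5,v_3,v_4,v_5,v_1$ passing through $v_5$ twice. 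For such ``composite'' cycles your per-vertex comparison has no target cycle in $\bar{G}$ to compare against, and the argument breaks down. The paper handles this case by partitioning the vertices of $\gamma'$ into blocks $\gamma_1,\dots,\gamma_\ell$ according to the positions where the lift must pass through $v$, observing that each $\gamma_j$ is itself a cycle of the reduced graph whose lift $\gamma_j\cup\{v\}$ \emph{is} a simple cycle of $G$, and then using a pigeonhole argument on the product inequality (since the $\gamma_j$ partition the vertex set of $\gamma'$, the Brualdi inequality for $\gamma'$ forces the Brualdi inequality for at least one $\gamma_j$). Some version of this decomposition is indispensable and is missing from your proposal.

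A secondary overstatement occurs in step (ii): after applying the triangle inequality to the modified diagonal entries $M(H)_{ii}=\omega_{ii}+\omega_{iv}\omega_{vi}/(\lambda-\omega_{vv})$ and the modified off-diagonal entries, the leftover terms do not simply cancel to yield the Brualdi inequality for the lifted cycle $\gamma$. What one actually obtains (as in the paper's Lemma 5.5) is membership in the \emph{union} of the Brualdi region of the lifted cycle and the region of the short cycle $\{v,v_m\}$ when the latter exists in $G$; a case analysis on whether a certain ratio inequality holds is needed to decide which. Your outline as written would need to be repaired to account for both the composite-cycle case and this union, at which point it essentially reproduces the paper's two-stage argument (a lemma for single lifted cycles, then the partition step).
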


That is, if the vertex $v$ is adjacent to no cycle in $C(G)$ and each cycle passing through $v$ is in $\mathcal{S}(v,G)$ then removing this vertex improves the Brualdi-type region of $G$. We note that for the graph $\mathcal{H}$ in figure 7 the set $\mathcal{A}(v_1,\mathcal{H})=\{v_2,v_3\}\neq\emptyset$. Hence, theorem \ref{impbrualdi} does not apply to the reduction of $\mathcal{H}$ over $\mathcal{S}$.

However, the vertex $v_4$ has the property that $\mathcal{A}(v_4,\mathcal{H})=\emptyset$ as well as $\mathcal{S}(v_4,\mathcal{H})=C(v_4,\mathcal{H})$. Therefore, reducing $\mathcal{H}$ over the vertex set $\mathcal{T}=\{v_1,v_2,v_3\}$ improves the Brualdi-type region of this graph which can be seen on the upper right hand side of figure 7.

As an example for why the condition $C(v,G)=\mathcal{S}(v,G)$ is necessary in theorem \ref{impbrualdi} consider the following. Let $\mathcal{J},\mathcal{R}_{S}(\mathcal{J})\in\mathbb{G}$ be the matrices given by
$$M(\mathcal{J})=\left[ \begin{array}{cccc}
0 & 1 & 0 & 0\\
0 & 0 & 1 & 0\\
1 & 0 & 0 & 1\\
1 & 0 & 0 & 0\\
\end{array}
\right], \ \ \text{and} \ \
M(\mathcal{R}_{S}(\mathcal{J}))=\left[ \begin{array}{cccc}
0 & 1 & 0\\
\frac{1}{\lambda} & 0 & 1\\
\frac{1}{\lambda} & 0 & 0\\
\end{array}
\right]$$
where $S=\{v_2,v_3,v_4\}$. In this case $\mathcal{BW}_B(\mathcal{R}_S(\mathcal{J}))\nsubseteq\mathcal{BW}_B(\mathcal{J})$. We note that $\mathcal{A}(v_1,\mathcal{J})=\emptyset$ but $\mathcal{S}(v_1,\mathcal{J})$ consists of the cycle $\{v_1,v_2,v_3\}$ whereas the cycle set  $C(v_1,\mathcal{J})=\{\{v_1,v_2,v_3\},\{v_1,v_2,v_3,v_4\}\}$. That is, $C(v_1,\mathcal{J})\neq \mathcal{S}(v_1,\mathcal{J})$.

\begin{figure}
\begin{tabular}{ccc}
    \begin{overpic}[scale=.35]{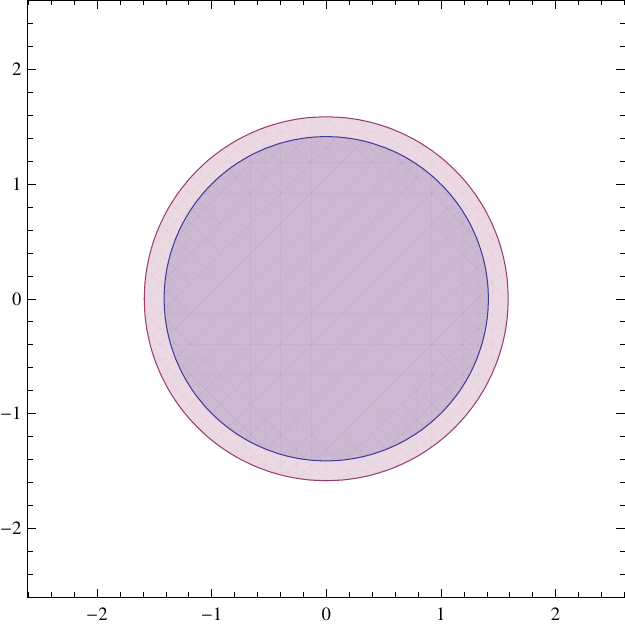}
    \put(59,50){$\bullet$}
    \put(78,50){$\bullet$}
    \put(31,58){$\bullet$}
    \put(31,42){$\bullet$}
    \end{overpic} &
    \begin{overpic}[scale=.35]{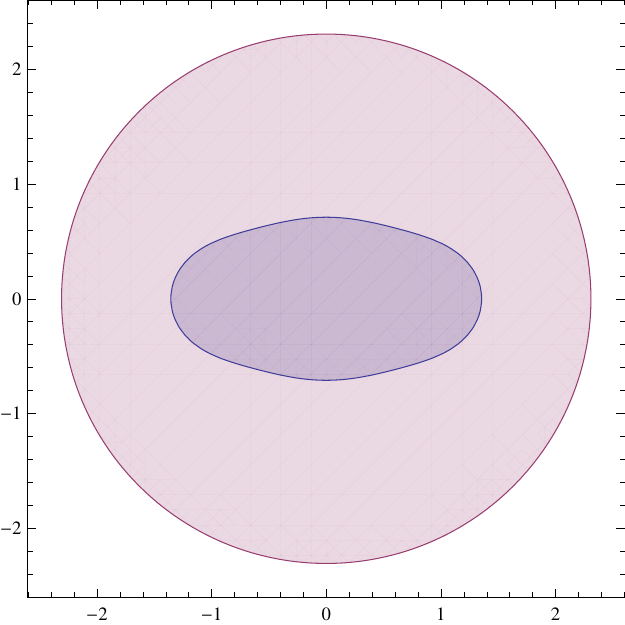}
    \put(59,50){$\bullet$}
    \put(78,50){$\bullet$}
    \put(31,58){$\bullet$}
    \put(31,42){$\bullet$}
    \end{overpic} &
    \begin{overpic}[scale=.35]{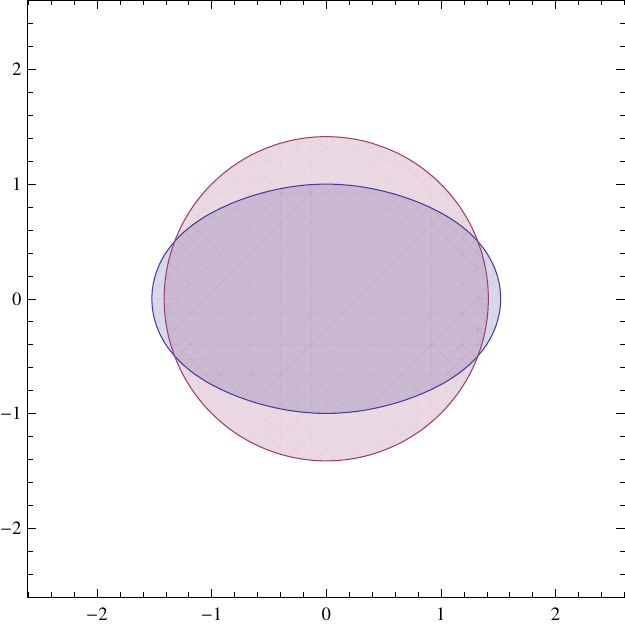}
    \put(59,50){$\bullet$}
    \put(78,50){$\bullet$}
    \put(31,58){$\bullet$}
    \put(31,42){$\bullet$}
    \end{overpic}\\
     &
    \begin{overpic}[scale=.50,angle=270]{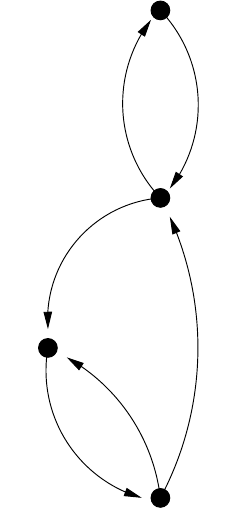}
    \put(52,-4){$\mathcal{H}$}
    \put(-2,31){10}
    \put(27,43){$v_1$}
    \put(27,1.5){$\frac{1}{10}$}
    \put(56,31){1}
    \put(77,3){1}
    \put(58,10){$v_2$}
    \put(77,26){1}
    \put(97,10){$v_3$}
    \put(-5,10){$v_4$}
    \put(23,22){$\frac{1}{10}$}
    \end{overpic} &
\end{tabular}
\caption{Top Left: $\mathcal{BW}_B(\mathcal{H})$. Top Middle: $\mathcal{BW}_B(\mathcal{R}_{\mathcal{S}}(\mathcal{H}))$. Top Right: $\mathcal{BW}_B(\mathcal{R}_{\mathcal{T}}(\mathcal{H}))$ where $\mathcal{S}=\{v_2,v_3,v_4\}$ and $\mathcal{T}=\{v_1,v_2,v_3\}$. $\sigma(\mathcal{H})$ is indicated.}
\end{figure}

Observe that graph reductions can increase, decrease or maintain the number of cycles a graph has in its cycle set. For instance the graph $\mathcal{G}_0$ in our previous example has 12 cycles in its cycle set whereas $\mathcal{G}_1$ has 3 and $\mathcal{G}_2$ has 1 (see figure 6). As an alternate example let $P,$ $\mathcal{R}_{U}(P)\in\mathbb{G}$ with adjacency matrices given by
$$M(P)=\left[ \begin{array}{ccccc}
0 & 1 & 0 & 0 & 0\\
0 & 0 & 0 & 0 & 1\\
0 & 0 & 0 & 1 & 0\\
0 & 0 & 0 & 0 & 1\\
1 & 0 & 1 & 0 & 0
\end{array}
\right], \ \ \text{and} \ \
M(\mathcal{R}_{U}(P))=\left[ \begin{array}{cccc}
0 & 1 & 0 & 0\\
\frac{1}{\lambda} & 0 & \frac{1}{\lambda} & 0\\
0 & 0 & 0 & 1\\
\frac{1}{\lambda} & 0 & \frac{1}{\lambda} & 0\\
\end{array}
\right]$$
where $U=\{v_1,v_2,v_3,v_4\}$. Here $C(P)=\{\{v_1,v_2,v_5\},\{v_3,v_3,v_5\}\}$ whereas\\ $C(\mathcal{R}_{U}(P))=\{\{v_1,v_2\}$, $\{v_3,v_4\},$ $\{v_1,v_2,v_3,v_4\}\}$. That is, reducing $P$ over $U$ increases the number of cycles needed to compute the associated Brualdi-type region from 2 to 3. This is in contrast to Gershgorin and Brauer type regions which always decrease in number as the associated graph is reduced.

In the case of Brualdi's original result (theorem \ref{theorem-7}) we must deal with the following complications. First, for a given graph $G\in\mathbb{G}_\pi$ where $C_w(G)=\emptyset$, it may not be the case that $C_w(\mathcal{R}_{V\setminus v}(G))=\emptyset$. Furthermore, as the edges between strongly connected components play a role in the associated eigenvalue inclusion region (see (\ref{br})) this also complicates whether or not estimates given by the original Brualdi-type region improves as the graph is reduced. However, it is possible to give sufficient conditions under which this is the case.

\begin{theorem}\textbf{(Improved Original Brualdi Regions)}\label{bru}
Let $G=(V,E,\omega)$ be in $\mathbb{G}_\pi$ and $v\in V$. If $\mathcal{A}(v,G)=\emptyset$, $C(v,G)=\mathcal{S}_{br}(v,G)$ and both of the sets $C_w(G)$ and $C_w(\mathcal{R}_{V\setminus v}(G))$ are empty then $\mathcal{BW}_{br}(\mathcal{R}_{V\setminus v}(G))\subseteq\mathcal{BW}_{br}(G)$.
\end{theorem}

\subsection{Proofs}

In order to prove the theorems in section 5.1, 5.2, and 5.3 we will need to evaluate functions at some fixed $\lambda\in\mathbb{C}$. In each case we consider such functions first as elements in $\mathbb{W}$ with common factors removed then evaluated at the value $\lambda$. In fact most of these functions, once common factors are removed, will be polynomials in $\mathbb{C}[\lambda]$.

Moreover, to simplify notation we will use the following. For $G=(V,E,\omega)$ where $G\in\mathbb{G}^n_\pi$ and $n\geq 2$ first note that the vertex set $V\setminus\{v_1\}\in st(G)$. Therefore, let $\mathcal{R}_{V\setminus \{v_1\}}(G)=\mathcal{R}_1$, $L_k(G,\lambda)=L_k$, $L_k(\mathcal{R}_1,\lambda)=L_k^1$, $\lambda-\omega_{kk}=\lambda_{kk}$ and $M(G,\lambda)_{k\ell}=\omega_{k\ell}$. Also, let $\displaystyle{\omega_{k\ell}=p_{k\ell}/q_{k\ell}}$ for $p_{k\ell},q_{k\ell}\in\mathbb{C}[\lambda]$ where we assume $q_{k\ell}=1$ if $\omega_{k\ell}=0$. Lastly, set $R_k(G)=\sum_{\ell=1,\ell\neq k}|\omega_{k\ell}L_k|$.

Before proceeding we state the following lemma.

\begin{lemma}\label{newlemma}
If $G\in\mathbb{G}^n_{\pi}$ for $n\geq 2$ then $q_{11}q_{i1}L_i^1=\big(q_{i1}(q_{11}\lambda-p_{11})\big)^{n-1}L_1L_i$.
\end{lemma}

\begin{proof}
First, note that
$$M(\mathcal{R}_1,\lambda)_{ij}=\frac{p_{i1}p_{1j}q_{ij}q_{11}+q_{i1}q_{1j}p_{ij}(q_{11}\lambda-p_{11})}{q_{i1}q_{1j}q_{ij}(q_{11}\lambda-p_{11})}, \  2\leq i,j\leq n$$ from which $\displaystyle{L_i^1=\prod_{j=2}^nq_{i1}q_{1j}q_{ij}(q_{11}\lambda-p_{11})}$. Therefore,
\begin{equation}\label{star}
\displaystyle{L_i^1=\big(q_{i1}(q_{11}\lambda-p_{11})\big)^{n-1}\prod_{j=2}^nq_{1j}\prod_{j=2}^nq_{ij}}.
\end{equation}
As $\displaystyle{L_k=\prod_{j=1}^n q_{kj}}$ for $1\leq k\leq n$ the result follows by multiplication of $q_{11}q_{i1}$.
\end{proof}

A proof of theorem \ref{impgersh} is the following.

\begin{proof}
Suppose that $\lambda\in\mathcal{BW}_\Gamma(\mathcal{R}_1)_i$ for fixed $\lambda\in\mathbb{C}$ and $2\leq i\leq n$. As each $M(\mathcal{R}_1)_{ij}=\omega_{ij}+\omega_{i1}\omega_{1j}/\lambda_{11}$ for $2\leq j \leq n$ then
$$|(\lambda_{ii}-\frac{\omega_{i1}\omega_{1i}}{\lambda_{11}})L_i^1|\leq\sum_{j=2,j\neq i}^n|(\omega_{ij}+\frac{\omega_{i1}\omega_{1j}}{\lambda_{11}})L_i^1|.$$
Multiplying both sides of this inequality by $|\lambda_{11}q_{11}q_{i1}|$ implies, via lemma \ref{newlemma}, that
$$Q_i(G)|\lambda_{11}L_1\lambda_{ii}L_i-\omega_{i1}\omega_{1i}L_1L_i|\leq Q_i(G)\sum_{j=2,j\neq i}^n|(\omega_{ij}\lambda_{11}+\omega_{i1}\omega_{1j})L_1L_i|$$
where $Q_i(G)=|\big(q_{i1}(q_{11}\lambda-p_{11})\big)|^{n-1}$. If $Q_i(G)\neq 0$ then, by the triangle inequality,
$$|\lambda_{11}L_1\lambda_{ii}L_i|-|\omega_{i1}\omega_{1i}L_1L_i|\leq \sum_{j=2,j\neq i}^n|\lambda_{11}L_1\omega_{ij}L_i|+\sum_{j=2,j\neq i}^n|\omega_{i1}L_i\omega_{1j}L_1|.$$
Therefore,
$$|\lambda_{11}L_1\lambda_{ii}L_i|-\sum_{j=1,j\neq i}^n|\lambda_{11}L_1\omega_{ij}L_i|\leq \sum_{j=2}^n|\omega_{i1}\omega_{1j}L_1L_i|-|\omega_{i1}L_i\lambda_{11}L_1|.$$
By factoring
\begin{equation}\label{neweq}
|\lambda_{11}L_1|\Big(|\lambda_{ii}L_i|-R_i(G)\Big)\leq |\omega_{i1}L_i|\Big(R_1(G)-|\lambda_{11}L_1|\Big).
\end{equation}
If we assume $\lambda\notin\mathcal{BW}_\Gamma(G)_i\cup\mathcal{BW}_\Gamma(G)_1$ then both
$$|\lambda_{ii}L_i|-R_i(G)>0 \ \ \text{and} \ \ R_1(G)-|\lambda_{11}L_1|<0.$$
These inequalities together with (\ref{neweq}) in particular imply that $\lambda_{11}L_1=0$. However, this in turn implies that $\lambda\in\mathcal{BW}_\Gamma(G)_1$, which is not possible.

Hence, $\lambda\in\mathcal{BW}_\Gamma(G)_i\cup\mathcal{BW}_\Gamma(G)_1$ unless $Q_i(G)=0$. Supposing then that this is the case, note that if $\displaystyle{L_{ij}=\prod_{\ell=1,\ell\neq j}^n q_{i\ell}}$ for $1 \leq i,j\leq n$ then
\begin{equation}\label{eq5}
\mathcal{BW}_\Gamma(G)_k=\{\lambda\in\mathbb{C}:|L_{kk}(q_{kk}\lambda-p_{kk})|\leq\sum_{j=1,j\neq k}^n|p_{kj}L_{kj}|\} \ \ \text{for} \ \ 1\leq k \leq n.
\end{equation}

Under the assumption $Q_i(G)=\big(q_{i1}(q_{11}\lambda-p_{11})\big)^{n-1}=0$ note that if $q_{i1}=0$ then $L_{ii}=0$ implying $\lambda\in\mathcal{BW}_\Gamma(G)_i$. If $q_{11}\lambda-p_{11}=0$ then $\lambda\in\mathcal{BW}_\Gamma(G)_1$ again by (\ref{eq5}).

Therefore, $\mathcal{BW}_{\Gamma}(\mathcal{R}_1)_i\subseteq\mathcal{BW}_\Gamma(G)_1\cup\mathcal{BW}_\Gamma(G)_i$ implying $\mathcal{BW}_{\Gamma}(\mathcal{R}_1)\subseteq\mathcal{BW}_\Gamma(G)$. The theorem follows by repeated use of theorem \ref{theorem-1} as it is always possible to sequentially remove single vertices of a graph in order to remove an arbitrary vertex set $\mathcal{\bar{V}}$.
\end{proof}

We now give a proof of theorem \ref{theorem8}.

\begin{proof}
Let $\lambda\in\mathbb{C}$ be fixed such that
\begin{equation}\label{assump1}
\lambda\in\partial\mathcal{BW}_\Gamma(G)_1\setminus\bigcup_{j=2}^n\mathcal{BW}_\Gamma(G)_j.
\end{equation}
Then both
\begin{align}\label{eq8}
|(\lambda_{11})L_1|&=R_1(G); \ \ \text{and}\\
\label{eq9}
|(\lambda_{ii})L_i|&>R_i(G), \ \ \text{for all} \ \ 1<i\leq n.
\end{align}
Supposing $\lambda\in\mathcal{BW}_\Gamma(\mathcal{R}_1)_i$ for some fixed $1<i\leq n$ and that $Q_i(G)\neq 0$ then (\ref{neweq}) holds. Combining (\ref{neweq}) with (\ref{eq8}) it follows that
\begin{equation*}
|\lambda_{11}L_1|\Big(|\lambda_{ii}L_i|-R_i(G)\Big)\leq 0.
\end{equation*}
Moreover, as $|\lambda_{ii}L_i|>R_i(G)$ from equation (\ref{eq9}) then this together with the previous inequality imply that $\lambda_{11}L_1$ must be zero. However, given that $\lambda_{11}L_1$ is a nonzero polynomial then this happens in at most finitely many values of $\lambda\in\mathbb{C}$. Similarly, the polynomial $Q_i(G)=0$ on only a finite set of $\mathbb{C}$, hence the assumption that
$$\partial\mathcal{BW}_\Gamma(G)_1\setminus\bigcup_{j=2}^n\mathcal{BW}_\Gamma(G)_j$$
is an infinite set in the complex plane yields a contradiction to assumption (\ref{assump1}) for infinitely many points in this set. Hence, the result follows in the case that $\{v_1\}=\mathcal{\bar{V}}$. By sequentially removing single vertices of $\bar{\mathcal{V}}$ from the graph $G$ repeated use of theorem \ref{theorem-1} completes the proof.
\end{proof}

Next we give a proof of theorem \ref{impbrauer}.

\begin{proof}
Let $G=(V,E,\omega)$ where $G\in\mathbb{G}^n_\pi$ and $n\geq 3$. The claim is that
\begin{equation}\label{claim1}
\mathcal{BW}_{\mathcal{K}}(\mathcal{R}_1)_{ij}\subseteq\mathcal{BW}_{\mathcal{K}}(G)_{1i}\cup\mathcal{BW}_{\mathcal{K}}(G)_{1j}\cup\mathcal{BW}_{\mathcal{K}}(G)_{ij}
\end{equation}
for any pair $2\leq i,j\leq n$ where $i\neq j$.

To see this let $\lambda\in\mathcal{BW}_{\mathcal{K}}(\mathcal{R}_1)_{ij}$ for fixed $i$ and $j$ from which it follows that
\begin{equation}\label{split1}
\begin{split}
|\big(\lambda_{ii}-\frac{\omega_{i1}\omega_{1i}}{\lambda_{11}}\big)L_i^1||\big(\lambda_{jj}-&\frac{\omega_{j1}\omega_{1j}}{\lambda_{11}}\big)L_j^1|\leq\\ \Big(\sum_{\begin{smallmatrix}\ell=2\\ \ell\neq i\end{smallmatrix}}^n|\big(\omega_{i\ell}+&\frac{\omega_{i1}\omega_{1\ell}}{\lambda_{11}}\big)L_i^1|\Big)\Big(\sum_{\begin{smallmatrix}\ell=2\\ \ell\neq j\end{smallmatrix}}^n|\big(\omega_{j\ell}+\frac{\omega_{j1}\omega_{1\ell}}{\lambda_{11}}\big)L_j^1|\Big)
\end{split}
\end{equation}
Multiplying both sides of (\ref{split1}) by $|\lambda_{11}q_{11}q_{i1}|$ and $|\lambda_{11}q_{11}q_{j1}|$, lemma \ref{newlemma} implies
\begin{align*}
\prod_{k=i,j}Q_k(G)|\lambda_{kk}\lambda_{11}L_1L_k-\omega_{k1}\omega_{1k}L_1L_k|&\leq\\
\prod_{k=i,j}Q_k(G)\Big(\sum_{\begin{smallmatrix}\ell=2\\ \ell\neq k\end{smallmatrix}}^n|&\big(\omega_{k\ell}\lambda_{11}+\omega_{k1}\omega_{1\ell}\big)L_1L_k|\Big).
\end{align*} Assuming for now that $Q_i(G)Q_j(G)\neq 0$ then by the triangle inequality
\begin{equation}\label{align1}
\begin{split}
\prod_{k=i,j}\Big(|\lambda_{11}L_1\lambda_{kk}L_k|-|\omega_{1k}L_1\omega_{k1}L_k|\Big)&\leq\\ \prod_{k=i,j}\Big(\sum_{\begin{smallmatrix}\ell=2\\ \ell\neq k\end{smallmatrix}}^n|\lambda_{11}L_1\omega_{k\ell}L_k|+&\sum_{\begin{smallmatrix}\ell=2\\ \ell\neq k\end{smallmatrix}}^n|\omega_{1\ell}L_1\omega_{k1}L_k|\Big).
\end{split}
\end{equation}
Suppose  $\lambda\notin\mathcal{BW}_{\mathcal{K}}(G)_{1i}\cup\mathcal{BW}_{\mathcal{K}}(G)_{1j}$. Then $|\lambda_{11}L_1||\lambda_{kk}L_k|> R_1(G)R_k(G)$ for $k=i,j$. Moreover, if $|\lambda_{11}L_1|\leq R_1(G)$ then from (\ref{align1})
\begin{equation}\label{align2}
\begin{split}
\prod_{k=i,j}&\Big(R_1(G)R_k(G)-|\omega_{1k}L_1\omega_{k1}L_k|\Big)<\\ &\prod_{k=i,j}\Big(R_1(G)\sum_{\begin{smallmatrix}\ell=2\\ \ell\neq k\end{smallmatrix}}^n|\omega_{k\ell}L_k|+\sum_{\begin{smallmatrix}\ell=2\\ \ell\neq k\end{smallmatrix}}^n|\omega_{k1}L_1\omega_{1\ell}L_k|\Big)
\end{split}
\end{equation}
From the fact that
\begin{equation}\label{eq11}
\begin{split}
R_1(G)R_k(G)-|\omega_{k1}L_1\omega_{1k}L_k|&=\\ R_1(G)&\sum_{\begin{smallmatrix}\ell=2\\ \ell\neq k\end{smallmatrix}}^n|\omega_{k\ell}L_k|+\sum_{\begin{smallmatrix}\ell=2\\ \ell\neq k\end{smallmatrix}}^n|\omega_{k1}L_1\omega_{1\ell}L_k|
\end{split}
\end{equation}
it follows that (\ref{align2}) cannot hold. Therefore, if $\lambda\in\mathcal{BW}_{\mathcal{K}}(\mathcal{R}_1)_{ij}$, $Q_i(G)Q_j(G)\neq 0$, and $\lambda\notin\mathcal{BW}_{\mathcal{K}}(G)_{1i}\cup\mathcal{BW}_{\mathcal{K}}(G)_{1j}$ then $|\lambda_{11}L_1|>R_1(G)$.

Proceeding as before, we assume again that $\lambda\in\mathcal{BW}_{\mathcal{K}}(\mathcal{R}_1)_{ij}$, so in particular (\ref{split1}) holds. Note if $\lambda_{11}=0$ then $\lambda\in\mathcal{BW}_{\mathcal{K}}(G)_{1i}\cup\mathcal{BW}_{\mathcal{K}}(G)_{1j}$ and claim (\ref{claim1}) holds. In what follows we assume then that $\lambda_{11}\neq 0$. Moreover, if $Q_i(G)Q_j(G)\neq0$ then multiplying both side of (\ref{split1}) by $|\lambda_{11}q_{11}q_{i1}|$ and $|\lambda_{ii}L_iq_{11}q_{j1}|$ yields
\begin{equation}\label{align3}
\begin{split}
\Big(|\lambda_{11}L_1\lambda_{ii}L_i|-|\omega_{1i}L_1\omega_{i1}&L_i|\Big)\Big(|\lambda_{ii}L_i\lambda_{jj}L_jL_1|-|\omega_{1j}L_1\omega_{j1}L_j\frac{\lambda_{ii}L_i}{\lambda_{11}}|\Big)\leq\\
\Big(\sum_{\begin{smallmatrix}\ell=2\\ \ell\neq i\end{smallmatrix}}^n|\lambda_{11}L_1\omega_{i\ell}L_i|+\sum_{\begin{smallmatrix}\ell=2\\ \ell\neq i\end{smallmatrix}}^n|&\omega_{1\ell}L_1\omega_{i1}L_i|\Big)\times\\
\Big(&\sum_{\begin{smallmatrix}\ell=2\\ \ell\neq j\end{smallmatrix}}^n|\lambda_{ii}L_i\omega_{j\ell}L_jL_1|+\sum_{\begin{smallmatrix}\ell=2\\ \ell\neq j\end{smallmatrix}}^n|\omega_{1\ell}L_1\omega_{j1}L_j\frac{\lambda_{ii}L_i}{\lambda_{11}}|\Big).
\end{split}
\end{equation}
by use of the triangle inequality.

Supposing that $\lambda\notin\mathcal{BW}_{\mathcal{K}}(G)_{1i}\cup\mathcal{BW}_{\mathcal{K}}(G)_{ij}$ then both $R_1(G)R_i(G)<|\lambda_{11}L_1\lambda_{ii}L_i|$ and $R_i(G)R_j(G)<|\lambda_{ii}L_i\lambda_{jj}L_j|$. This together with (\ref{align3}) implies
\begin{equation*}\label{align4}
\begin{split}
\Big(R_1(G)R_i(G)-|\omega_{1i}L_1\omega_{i1}&L_i|\Big)\Big(R_i(G)R_j(G)L_1-|\omega_{1j}L_1\omega_{j1}L_j\frac{\lambda_{ii}L_i}{\lambda_{11}}|\Big)<\\
\Big(\sum_{\begin{smallmatrix}\ell=2\\ \ell\neq i\end{smallmatrix}}^n|\lambda_{11}L_1\omega_{i\ell}L_i|+\sum_{\begin{smallmatrix}\ell=2\\ \ell\neq i\end{smallmatrix}}^n|&\omega_{1\ell}L_1\omega_{i1}L_i|\Big)\times\\
\Big(|\lambda_{ii}L_iL_1|\big(R_j(G)&-|\omega_{j1}L_j|\big)+|\omega_{j1}L_j\frac{\lambda_{ii}L_i}{\lambda_{11}}|\big(R_1(G)-|\omega_{1j}L_1|\big)\Big).
\end{split}
\end{equation*}
If $|\lambda_{ii}L_i|\leq R_i(G)$ then
\begin{equation}\label{align5}
\begin{split}
\Big(R_1(G)R_i(G)-|\omega_{1i}L_1\omega_{i1}&L_i|\Big)\Big(R_i(G)R_j(G)L_1-|\omega_{1j}L_1\omega_{j1}L_j\frac{\lambda_{ii}L_i}{\lambda_{11}}|\Big)<\\
\Big(\sum_{\begin{smallmatrix}\ell=2\\ \ell\neq i\end{smallmatrix}}^n|\lambda_{11}L_1\omega_{i\ell}L_i|+\sum_{\begin{smallmatrix}\ell=2\\ \ell\neq i\end{smallmatrix}}^n|&\omega_{1\ell}L_1\omega_{i1}L_i|\Big)\cdot\\
\Big(R_i(G)|L_1|\big(R_j(G)&-|\omega_{j1}L_j|\big)+|\omega_{j1}L_j\frac{\lambda_{ii}L_i}{\lambda_{11}}|\big(R_1(G)-|\omega_{1j}L_1|\big)\Big).
\end{split}
\end{equation}
The claim then is that if $\lambda\notin\mathcal{BW}_{\mathcal{K}}(G)_{1i}\cup\mathcal{BW}_{\mathcal{K}}(G)_{1j}$, which implies $|\lambda_{11}L_1|>R_1(G)$ by the above, then the second terms in each product of (\ref{align5}) have the relation
\begin{equation}\label{align6}
\begin{split}
R_i(G)R_j(G)-|\omega_{1j}L_1\omega_{j1}L_j\frac{\lambda_{ii}L_i}{\lambda_{11}}|\geq\\
R_i(G)|L_1|\big(R_j(G)-|\omega_{j1}L_j|\big)+|\omega_{j1}L_j\frac{\lambda_{ii}L_i}{\lambda_{11}}|\big(R_1(G)-|\omega_{1j}L_1|\big).
\end{split}
\end{equation}
To see this note that this is true if and only if
\begin{equation*}
R_i(G)|\omega_{j1}L_jL_1|\geq|\omega_{j1}L_j\lambda_{ii}L_i|\frac{R_1(G)}{|\lambda_{11}|}.
\end{equation*}
As this is true if and only if $|\lambda_{11}L_1|R_i(G)\geq R_1(G)|\lambda_{ii}L_i|$ this verifies that (\ref{align6}) holds since both $R_i(G)\geq|\lambda_{ii}L_i|$ and
$|\lambda_{11}L_1|>R_1(G)$. Therefore, equations (\ref{align5}) and (\ref{align6}) together imply that
\begin{equation}\label{align7}
R_1(G)R_i(G)-|\omega_{1i}L_1\omega_{i1}L_i|<
\sum_{\begin{smallmatrix}\ell=2\\ \ell\neq i\end{smallmatrix}}^n|\lambda_{11}L_1\omega_{i\ell}L_i|+\sum_{\begin{smallmatrix}\ell=2\\ \ell\neq i\end{smallmatrix}}^n|\omega_{1\ell}L_1\omega_{i1}L_i|.
\end{equation}
Rewriting the right-hand side of this inequality in terms of $R_k(G)$ (for $k=1,i$) yields
\begin{equation*}
R_1(G)R_i(G)<|\lambda_{11}L_1|R_i(G)-|\lambda_{11}L_1\omega_{i1}L_i|+|\omega_{i1}L_i|R_1(G).
\end{equation*}
This in turn implies that $R_i(G)\big(R_1(G)-|\lambda_{11}L_1|\big)<|\omega_{i1}L_i|\big(R_1(G)-|\lambda_{11}L_1|\big)$. However, it then follows that $$R_i(G)=\sum^n_{\ell=1,\ell\neq i}|\omega_{i\ell}L_i|<|\omega_{i1}L_i|,$$ which is not possible.

Therefore, if both $Q_i(G)Q_j(G)\neq 0$ and $\lambda\notin\mathcal{BW}_{\mathcal{K}}(G)_{1i}\cup\mathcal{BW}_{\mathcal{K}}(G)_{1j}\cup\mathcal{BW}_{\mathcal{K}}(G)_{ij}$ then $|\lambda_{ii}L_i|>R_i(G)$. Moreover, as this argument is symmetric in the indices $i$ and $j$ then it can be modified to show that if both $Q_i(G)Q_j(G)\neq 0$ and $\lambda\notin\mathcal{BW}_{\mathcal{K}}(G)_{1i}\cup\mathcal{BW}_{\mathcal{K}}(G)_{1j}\cup\mathcal{BW}_{\mathcal{K}}(G)_{ij}$ then $|\lambda_{jj}L_j|>R_j(G)$.

With this in mind, by multiplying (\ref{split1}) by $|q_{11}q_{i1}|$ and $|q_{11}q_{i1}|$ and assuming once again that $Q_i(G)Q_j(G)\neq 0$, then the triangle inequality implies
\begin{equation}\label{align9}
\begin{split}
\prod_{k=i,j}&\Big(|\lambda_{kk}L_k||L_1|-|\frac{\omega_{k1}\omega_{1k}}{\lambda_{11}}L_1L_k|\Big)\leq\\ \prod_{k=i,j}&\Big(\sum_{\begin{smallmatrix}\ell=1\\ \ell\neq k\end{smallmatrix}}^n|\omega_{k\ell}L_k||L_1|-|\omega_{k1}L_kL_1|+\sum_{\ell=2}^n|\frac{\omega_{k1}\omega_{1\ell}}{\lambda_{11}}L_kL_1|-|\frac{\omega_{k1}\omega_{1k}}{\lambda_{11}}L_kL_1|\Big).
\end{split}
\end{equation}
Hence, if $\lambda\notin\mathcal{BW}_{\mathcal{K}}(G)_{1i}\cup\mathcal{BW}_{\mathcal{K}}(G)_{1j}\cup\mathcal{BW}_{\mathcal{K}}(G)_{ij}$ then from the previous calculations $R_k(G)<|\lambda_{kk}L_k|$ for $k=1,i$, and $j$ implying together with (\ref{align9}) that
\begin{equation*}
\begin{split}
\prod_{k=i,j}&\Big(R_k(G)|L_1|-|\frac{\omega_{k1}\omega_{1k}}{\lambda_{11}}L_1L_k|\Big)<\\ &\prod_{k=i,j}\Big(R_k(G)|L_1|-|\omega_{k1}L_kL_1|+|\omega_{k1}L_k|\frac{R_1(G)}{|\lambda_{11}|}-|\frac{\omega_{k1}\omega_{1k}}{\lambda_{11}}L_kL_1|\Big).
\end{split}
\end{equation*}

Hence, for either $k=i$ or $k=j$ it follows that
$$-|\omega_{k1}L_kL_1|+|\omega_{k1}L_k|\frac{R_1(G)}{|\lambda_{11}|}>0.$$
Therefore, $R_1(G)>|\lambda_{11}L_1|$ which is not possible. As this implies that $\lambda\notin\mathcal{BW}_{\mathcal{K}}(G)_{1i}\cup\mathcal{BW}_{\mathcal{K}}(G)_{1j}\cup\mathcal{BW}_{\mathcal{K}}(G)_{ij}$, unless $Q_i(G)Q_j(G)=0$ suppose that this product is in fact equal to zero.

In this case note that by modifying equation (\ref{eq5})
\begin{equation*}
\mathcal{BW}_{\mathcal{K}}(G)_{ij}=\Big\{\lambda\in\mathbb{C}:\prod_{k=i,j}|L_{kk}(q_{kk}\lambda-p_{kk})|\leq\prod_{k=i,j}\Big(\sum_{j=1,j\neq k}^n|p_{kj}L_{kj}|\Big)\Big\}
\end{equation*}
for $1\leq k \leq n$. Hence, if $Q_k(G)=0$ for either $k=i,j$ then by calculations analogous to those given in the proof of theorem \ref{impgersh} it follows that $\lambda\in\mathcal{BW}_{\mathcal{K}}(G)_{ik}$. This verifies the claim given in (\ref{claim1}). Hence, theorem \ref{impbrauer} holds for $\mathcal{V}=V-\{v_1\}$.

As in the previous proofs, theorem \ref{theorem-1} can be invoked to generalize this result to the reduction over the set $\mathcal{V}\subseteq V$.
\end{proof}

In order to prove theorem \ref{impbrualdi} we first give the following lemma.

\begin{lemma}\label{lemma3}
Let $G\in\mathbb{G}^n_\pi$ for $n\geq2$ and suppose both $\mathcal{A}(v_1,G)=\emptyset$ and $C(v_1,G)=\mathcal{S}(v_1,G)$. Moreover, let $\gamma=\{v_1,\dots,v_m\}$ and $\gamma^\prime=\{v_2,\dots,v_m\}$ for $m\geq2$. If $\gamma\in C(G)$ and $\gamma^\prime=\in C(\mathcal{R}_1(G))$ then $\mathcal{BW}_B(\mathcal{R}_1(G))_{\gamma^\prime}\subseteq\mathcal{BW}_B(G)$.
\end{lemma}

\begin{proof}
Suppose first that the hypotheses of the lemma hold. We then make the observation that the edges $e\in E^{scc}$ are not used to calculate to $\mathcal{BW}_B(G)$. Furthermore, any cycle of $G$ is contained in exactly one strongly connected component of this graph. This implies that the Brualdi-type region of the graph is the union of the Brualdi-type regions of its strongly connected components. Therefore, we may without loss in generality assume that $G$ consists of a single strongly connected component.

Suppose that both $\gamma=\{v_1,\dots,v_m\}$ and $\delta=\{v_1,v_m\}$ are cycles in $C(v_1,G)$ for some $1<m\leq n$. Note the fact that $\gamma\in C(v_1,G)$ implies, in particular, that $\gamma^\prime=\{v_2,\dots,v_m\}$ is a cycle in $C(\mathcal{R}_1)$.

From the assumption that $v_1$ has no adjacent cycles it follows that $\omega_{mi}=0$ for $1<i\leq m$ since otherwise $\{v_i,v_{i+1},\dots,v_m\}\in\mathcal{A}(v_1,G)$. Also, as $\gamma\in C(v_1,G)=\mathcal{S}(v_1,G)$ then $\omega_{i1}=0$ for $1<i<m$ as well as $\omega_{mi}=0$ for $m<i\leq n$ as $G$ is assumed to have one strongly connected component. Therefore,
\begin{equation}\label{ineq1}
\mathcal{BW}_B(G)_{\gamma}=\{\lambda\in\mathbb{C}:\prod_{i=1}^m|\lambda_{ii}L_i|\leq |\omega_{m1}L_m|\prod_{i=1}^{m-1}R_i(G)\},
\end{equation}
\begin{equation}\label{ineq2}
\mathcal{BW}_B(G)_\delta=\{\lambda\in\mathbb{C}:|\lambda_{11}L_1||\lambda_{mm}L_m|\leq |\omega_{m1}L_m|R_1(G)\}.
\end{equation}
Suppose then that $\lambda\in\mathcal{BW}_B(\mathcal{R}_1)_{\gamma^\prime}$. Then
\begin{equation}\label{ineq3}
|(\lambda_{mm}-\frac{\omega_{m1}\omega_{1m}}{\lambda_{11}})L_m^1|\prod_{i=2}^{m-1}|\lambda_{ii}L_i|\leq \sum_{i=2}^{m-1}|\frac{\omega_{m1}\omega_{1i}}{\lambda_{11}}L_m^1|\prod_{i=2}^{m-1}R_i(G).
\end{equation}
Here, $L_i^1=L_i$ for $1<i<m$ since for each such $i$ the edge $e_{i1}\notin E$.

Multiplying both sides of (\ref{ineq3}) by $|q_{11}q_{1m}\lambda_{11}|$ along with the triangle inequality implies
\begin{equation}\label{ineq4}\begin{split}
Q_m(G)\Big(|\lambda_{11}L_1\lambda_{mm}L_m|-|\omega_{1m}L_1\omega_{m1}L_m|\Big)\prod_{i=2}^{m-1}|\lambda_{ii}L_i|\leq\\ Q_m(G)\Big(|\omega_{m1}L_m|R_1(G)-|\omega_{1m}L_1\omega_{m1}L_m|\Big)\prod_{i=2}^{m-1}R_i(G).
\end{split}
\end{equation}

Now by use of equation (\ref{eq5}) we have
\begin{equation*}
\mathcal{BW}_{B}(G)_\delta=\{\lambda\in\mathbb{C}:\prod_{k=1,m}|L_{kk}(q_{kk}\lambda-p_{kk})|\leq\prod_{k=1,m}\Big(\sum_{j=1,j\neq k}^n|p_{kj}L_{kj}|\Big)\}.
\end{equation*}
Hence, if $Q_m(G)=0$ then by calculations analogous to those given in the proof of theorem \ref{impgersh} it follows that $\lambda\in\mathcal{BW}_{B}(G)_\delta$. Therefore, assume that $Q_m(G)\neq0$.

Then if $\prod_{i=2}^{m-1}R_i(G)=0$ it follows from (\ref{ineq4}) that either $\prod_{i=2}^{m-1}|\lambda_{ii}L_i|=0$ or that $|\lambda_{11}L_1\lambda_{mm}L_m|-|\omega_{m1}L_1\omega_{1m}L_m|=0$. If the first is the case then $\lambda\in\mathcal{BW}_B(G)_{\gamma}$. If the latter is the case then $\lambda\in\mathcal{BW}_B(G)_{\delta}$ since $|\omega_{1m}L_1|\leq R_1(G)$.

If both $\prod_{i=2}^{m-1}R_i(G)\neq0$ and $|\lambda_{11}L_1\lambda_{mm}L_m|-|\omega_{m1}L_1\omega_{1m}L_m|\neq0$ then (\ref{ineq4}) implies
\begin{equation}\label{ineq5}
\frac{\prod_{i=2}^{m-1}|\lambda_{ii}L_i|}{\prod_{i=2}^{m-1}R_i(G)}\leq \frac{|\omega_{m1}L_m|R_1(G)-|\omega_{1m}L_1\omega_{m1}L_m|}{|\lambda_{11}L_1\lambda_{mm}L_m|-|\omega_{m1}L_1\omega_{1m}L_m|}
\end{equation}
Note that if
$$\frac{|\omega_{m1}L_m|R_1(G)-|\omega_{1m}L_1\omega_{m1}L_m|)}{|\lambda_{11}L_1\lambda_{mm}L_m|-|\omega_{m1}L_1\omega_{1m}L_m|}\leq \frac{|\omega_{m1}L_m|R_1(G)}{|\lambda_{11}L_1\lambda_{mm}L_m|}$$
then it follows from (\ref{ineq5}) together with (\ref{ineq1}) that $\lambda\in\mathcal{BW}_B(G)_{\gamma}$. On the other hand, if
this inequality does not hold then $|\lambda_{11}L_1||\lambda_{mm}L_m|<|\omega_{m1}L_m|R_1(G)$ implying $\lambda\in\mathcal{BW}_B(G)_{\delta}$. Therefore, $\mathcal{BW}_B(\mathcal{R}_1)_{\gamma^\prime}\subseteq\mathcal{BW}_B(G)_{\gamma}\cup\mathcal{BW}_B(G)_{\delta} \subseteq\mathcal{BW}_B(G)$.

Conversely, if $\delta\notin C(G)$ then $\omega_{1m}L_1=0$. Equation (\ref{ineq4}) together with (\ref{ineq1}) then imply that $\mathcal{BW}_B(\mathcal{R}_1)_{\gamma^\prime}\subseteq\mathcal{BW}_B(G)_{\gamma}$. Hence, $\mathcal{BW}_B(G)_{\gamma^\prime}\subseteq\mathcal{BW}_B(G)$.
\end{proof}

We now give a proof of theorem \ref{impbrualdi}.

\begin{proof}
First, as in the previous proof, suppose $G$ consists of a single strongly connected component. Moreover, for the vertex $v_1\in V$ suppose both $\mathcal{A}(v_1,G)=\emptyset$ and $C(v_1,G)=\mathcal{S}(v_1,G)$. Also let $\gamma^\prime=\{v_2,\dots,v_m\}$ be a cycle in $C(\mathcal{R}_1)$ for some $1<m\leq n$.

As $\mathcal{A}(v_1,G)=\emptyset$, if $\gamma^\prime\in C(G)$ then $M(G,\lambda)_{ij}=M(\mathcal{R}_1,\lambda)_{ij}$ for $2\leq i\leq m$ and $1\leq j\leq n$ since $\gamma^\prime$ would otherwise be adjacent to $v_1$. From this it follows that $\mathcal{BW}_B(\mathcal{R}_1)_{\gamma^\prime}=\mathcal{BW}_B(G)_{\gamma^\prime}\subseteq\mathcal{BW}_B(G)$.

On the other hand, if $\gamma^\prime\notin C(G)$ then at least one edge of the form $e_{i-1,i}$ for $3\leq i\leq m$ or $e_{m2}$ is not in $E$. If this is the case then without loss in generality assume for notational simplicity that $e_{m2}\notin E$. Furthermore, let
$$\mathcal{I}=\{i: e_{i-1,i}\notin E, \ 3\leq i\leq m\}\cup\{2\}.$$

We give the set $\mathcal{I}$ the ordering $\mathcal{I}=\{i_1,\dots,i_\ell\}$ such that $i_j<i_k$ if and only if $j<k$. Then for each $1\leq j\leq \ell$ the ordered sets
\begin{equation}\label{set}
\gamma_{j}=\{v_1,v_{i_j},v_{i_j+1},\dots,v_{j_\alpha}\}
\end{equation}
are cycles in $C(v_1,G)$ where $j_{\alpha}=i_{j+1}-1$ and $\ell_{\alpha}=m$. Moreover, by removing the vertex $v_1$ from $G$ it follows from (\ref{set}) that each of the ordered sets
$$\gamma_{j}^\prime=\{v_{i_j},v_{i_j+1},\dots,v_{j_\alpha}\}$$
are cycles in $C(\mathcal{R}_1)$. As both $\mathcal{A}(v_1,G)=\emptyset$ and $C(v_1,G)=\mathcal{S}(v_1,G)$, lemma \ref{lemma3} therefore implies that
$$\bigcup^\ell_{j=1}\mathcal{BW}_B(\mathcal{R}_1)_{\gamma_j^\prime}\subseteq\mathcal{BW}_B(G).$$

The claim then is that the region
\begin{equation}\label{ineq6}
\mathcal{BW}_B(\mathcal{R}_1)_{\gamma^\prime}\subseteq\bigcup_{j=1}^{\ell}\mathcal{BW}_B(\mathcal{R}_1)_{\gamma_j^\prime}.
\end{equation}

To see this, let $\displaystyle{\lambda_{ii}^1=(\lambda-\omega_{ii}-\frac{\omega_{i1}\omega_{1i}}{\lambda_{11}})L_i^1}$ and $\displaystyle{R^1_i=\sum_{j=2,j\neq i}^n|M(\bar{\mathcal{R}}_1,\lambda)_{ij}|}$. Then
\begin{equation}\label{ref1}
\mathcal{BW}_B(\mathcal{R}_1)_{\gamma^\prime}=\{\lambda\in\mathbb{C}:\prod_{i=2}^m|\lambda_{ii}^1|\leq\prod_{i=2}^m R_i^1\} \ \ \text{and}
\end{equation}
\begin{equation}\label{ref2}
\mathcal{BW}_B(\mathcal{R}_1)_{\gamma_j^\prime}=\{\lambda\in\mathbb{C}:\prod_{i\in\gamma_j}^m|\lambda_{ii}^1|\leq\prod_{i\in\gamma_j}^m R_i^1\} \ \ \text{for} \ \ 1\leq j\leq \ell.
\end{equation}

As the vertex set $\gamma^\prime$ is the disjoint union of the vertex sets of the cycles $\gamma^\prime_j$ then
the assumption that $\lambda\notin\mathcal{BW}_B(\mathcal{R}_1)_{\gamma_j^\prime}$ for each $1\leq j\leq \ell$ implies $\lambda\notin\mathcal{BW}_B(\mathcal{R}_1)_{\gamma^\prime}$ by comparing the product of (\ref{ref2}) over all $1\leq j\leq \ell$ to (\ref{ref1}). This verifies the claim given in (\ref{ineq6}), which implies that $\mathcal{BW}_B(\mathcal{R}_1)_{\gamma^\prime}\subseteq\mathcal{BW}_B(G)$.

As $\gamma^\prime$ was an arbitrary cycle in $C(\mathcal{R}_1)$ then it follows that $\mathcal{BW}_B(\mathcal{R}_1)\subseteq\mathcal{BW}_B(G)$. This completes the proof.
\end{proof}

A proof of theorem \ref{bru} is the following.

\begin{proof}
If the conditions given in the theorem hold for $v=v_1$ then both $\mathcal{BW}_{br}(G)$ and $\mathcal{BW}_{br}(\mathcal{R}_1)$ exist since it is assumed that $C_w(G)=\emptyset$ and $C_w(\mathcal{R}_1)=\emptyset$. Moreover, if $\mathcal{S}(v_1,G)$ is replaced by $\mathcal{S}_{br}(v_1,G)$ and $\mathcal{BW}_B(\cdot)$ by $\mathcal{BW}_{br}(\cdot)$ then the conclusions of lemma \ref{lemma3} hold by the same proof following the lemma with the exception that $G$ is not assumed to have a single strongly connected component. As the same holds for the proof of theorem \ref{impbrualdi} the result follows.
\end{proof}

\section{Some Applications} In this section we discuss some natural applications of using graph reductions to improve estimates of the spectra of certain graphs. Our first application deals with estimating the spectra of the Laplacian matrix of a given graph. Following this we give a method for estimating the spectral radius of a matrix using graph reductions. Last, we use the results of theorem \ref{theorem8} as well as some structural knowledge of a graph to identify particularly useful structural sets.

\subsection{Laplacian Matrices}
It is possible to reduce not only the graph $G$ but also the graphs associated with both the combinatorial Laplacian matrix and the normalized Laplacian matrix of $G$. Such matrices are typically defined for undirected graphs without loops or weights but this definition can be extended to graphs in $\mathbb{G}$ (see remark 3 below). However, here we give the standard definitions as these are of interest in their own right (see \cite{Chung97,Chung06}).

\begin{figure}
\begin{tabular}{cc}
    \begin{overpic}[scale=.5]{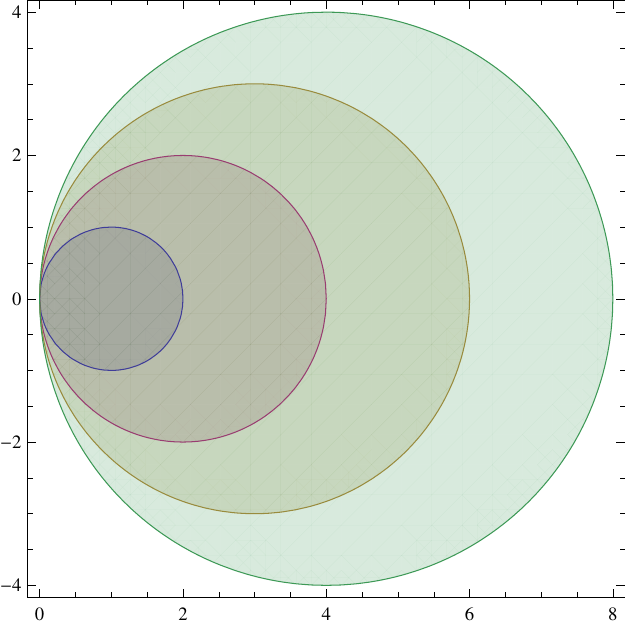}
    \put(5,50){$\bullet$ 0}
    \put(16.5,50){$\bullet$ 1}
    \put(28,50){$\bullet$ 2}
    \put(50.5,50){$\bullet$ 4}
    \put(62,50){$\bullet$ 5}
    \end{overpic} &
    \begin{overpic}[scale=.5]{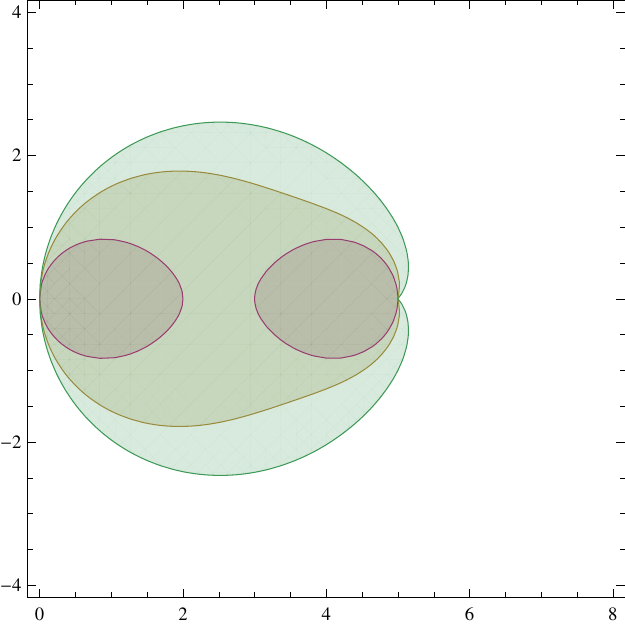}
    \put(5,50){$\bullet$ 0}
    \put(16.5,50){$\bullet$ 1}
    \put(28,50){$\bullet$ 2}
    \put(50.5,50){$\bullet$ 4}
    \put(62,50){$\bullet$ 5}
    \end{overpic}
\end{tabular}
\caption{Left: $\mathcal{BW}_\Gamma\big(L(H)\big)$. Right: $\mathcal{BW}_\Gamma\big(\mathcal{R}_S(L(H))\big)$, where in each the spectrum $\sigma\big(L(H)\big)=\{0,1,2,4,5\}$ is indicated.}
\end{figure}

Let $G=(V,E)$ be an unweighted undirected graph without loops, i.e. a \textit{simple graph}. If $G$ has vertex set $V=\{v_1,\dots,v_n\}$ and $d(v_i)$ is the degree of vertex $v_i$ then its \textit{combinatorial Laplacian matrix} $M_L(G)$ of $G$ is given by
$$M_L(G)_{ij}=\begin{cases}
d(v_i) & if \ \ i=j\\
-1 & if \ \ i\neq j \ \text{and} \ v_i \ \text{is adjacent to} \ v_j\\
0 & \text{otherwise}
\end{cases}$$ On the other hand the \textit{normalized Laplacian matrix} $M_\mathcal{L}(G)$ of $G$ is defined as
$$M_\mathcal{L}(G)_{ij}=\begin{cases}
1 & if \ \ i=j \ \text{and} \ d(v_j)\neq 0\\
\frac{-1}{\sqrt{d(v_i)d(v_j)}} & if \ \ v_i \ \text{is adjacent to} \ v_j\\
0 & \text{otherwise}
\end{cases}$$

The interest in the eigenvalues of $M_L(G)$ is that $\sigma(M_L(G))$ gives structural information about $G$ (see \cite{Chung97}). On the other hand knowing $\sigma(M_\mathcal{L}(G))$ is useful in determining the behavior of algorithms on the graph $G$ among other things (see \cite{Chung06}).

Let $L(G)$ be the graph with adjacency matrix $M_L(G)$ and similarly let $\mathcal{L}(G)$ be the graph with adjacency matrix $M_\mathcal{L}(G)$. Since both $L(G),\mathcal{L}(G)\in\mathbb{G}_\pi$ either may be reduced over any subset of their respective vertex sets.

For example if $H\in\mathbb{G}_\pi$ is the simple graph with adjacency matrix
$$M(H)=\left[ \begin{array}{ccccc}
0&0&0&0&1\\
0&0&0&1&1\\
0&0&0&1&1\\
0&1&1&0&1\\
1&1&1&1&0
\end{array}
\right]$$
then the graph $L(H)$, has the structural set $S=\{v_1,v_2,v_3,v_4\}$. Reducing over this set yields $\mathcal{R}_S\big(L(H)\big)$ where
$$M\big(\mathcal{R}_S(L(H))\big)=\left[ \begin{array}{cccc}
\frac{\lambda-3}{\lambda-4}&\frac{1}{\lambda-4}&\frac{1}{\lambda-4}&\frac{1}{\lambda-4}\\
\frac{1}{\lambda-4}&\frac{2\lambda-7}{\lambda-4}&\frac{1}{\lambda-4}&\frac{-\lambda+5}{\lambda-4}\\
\frac{1}{\lambda-4}&\frac{1}{\lambda-4}&\frac{2\lambda-7}{\lambda-4}&\frac{-\lambda+5}{\lambda-4}\\
\frac{1}{\lambda-4}&\frac{-\lambda+5}{\lambda-4}&\frac{-\lambda+5}{\lambda-4}&\frac{3\lambda-11}{\lambda-4}
\end{array}
\right].$$
Figure 8 shows the Gershgorin regions for $L(H)$ as well as $\mathcal{R}_S(L(H))$.

Note that the adjacency matrix of $H$ is symmetric so its eigenvalues must be real numbers. With this in mind we note that the Gershgorin-type region associated with simple graphs and their reductions can be reduced to intervals of the real number line.

\begin{figure}
\begin{tabular}{cc}
    \begin{overpic}[scale=.5]{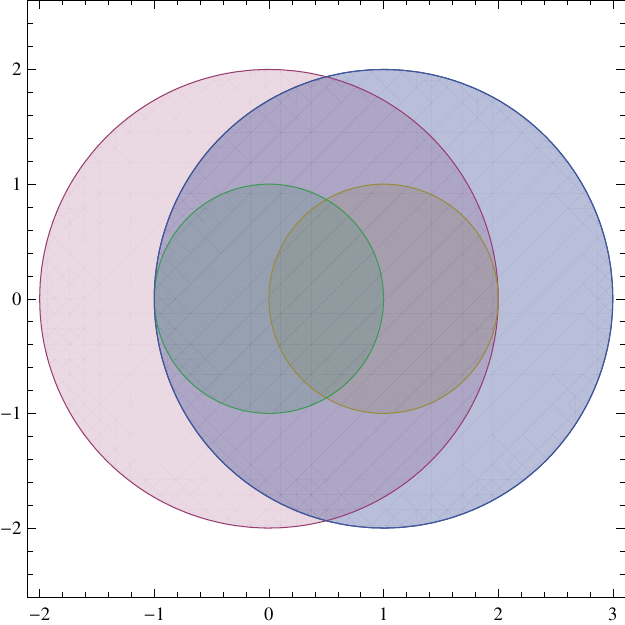}
    \put(91,50){3 $\bullet$}
    \end{overpic} &
    \begin{overpic}[scale=.5]{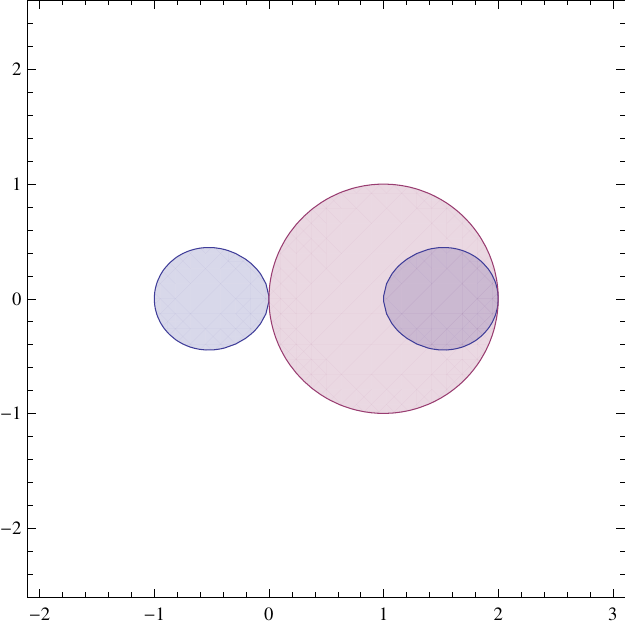}
    \put(78,50){$\bullet$ 2}
    \end{overpic}\\
    \begin{overpic}[scale=.5,angle=270]{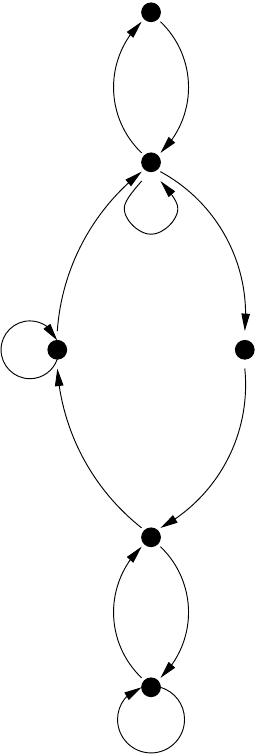}
    \put(52,-6){$K$}
    \put(8,19){$v_1$}
    \put(27,19){$v_2$}
    \put(53,22){$v_3$}
    \put(53,5){$v_4$}
    \put(77,18){$v_5$}
    \put(98,18){$v_6$}
    \end{overpic} &
    \begin{overpic}[scale=.5,angle=270]{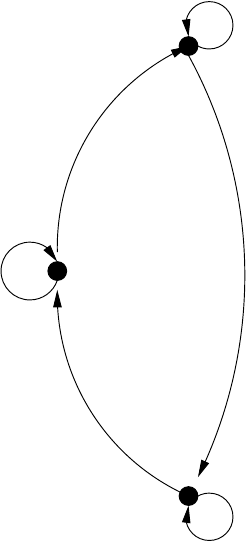}
    \put(11,11){$v_1$}
    \put(78,11){$v_5$}
    \put(49,27){$v_3$}
    \put(28,-6){$\mathcal{R}_{\{v_1,v_2,v_3\}}(K)$}
    \put(82,26){$1$}
    \put(58,37){$1$}
    \put(48,5){$\frac{1}{\lambda}$}
    \put(10,26){$\frac{1}{\lambda}$}
    \put(-5,-6){$\frac{\lambda+1}{\lambda}$}
    \put(92,-6){$\frac{\lambda+1}{\lambda}$}
    \end{overpic}
\end{tabular}
\caption{Top Left: $\mathcal{BW}_\Gamma(K)$ from which  $\rho(K)\leq 3$. Top Right: $\mathcal{BW}_\Gamma(\mathcal{R}_{\{v_1,v_2,v_3\}}(K))$ from which $\rho(K)\leq 2$.}
\end{figure}

\begin{remark}
It is possible to generalize $M_L(G)$ to any $G\in\mathbb{G}$ if $G$ has no loops and $n$ vertices by setting $M_L(G)_{ij}=-M(G)_{ij}$ for $i\neq j$ and $M_L(G)_{ii}=\sum_{j=1,j\neq i}^n M(G)_{ij}$. This generalization is consistent with what is done for weighted digraphs in \cite{Wu05} for example.
\end{remark}

\subsection{Estimating the Spectral Radius of a Matrix}
For $G\in\mathbb{G}_{\pi}$ the spectral radius of $G$, denoted $\rho(G)$, is the maximum among the absolute values of the elements in $\sigma(G)$ i.e. $$\rho(G)=\max_{\lambda\in\sigma(G)}|\lambda|.$$

For many graphs $G\in\mathbb{G}_\pi$ it is possible to find some structural set $S\in st(G)$ such that each vertex of $\bar{S}$ has no loop. By corollary \ref{theoremnew}, if $S$ is such a set then $\sigma(G)$ and $\sigma(\mathcal{R}_S(G))$ differ at most by $E(G;S)=\{0\}$ implying that $\rho(G)=\rho(\mathcal{R}_S(G))$.

For example, in the graph $K$ shown in figure 9 the vertices $v_2,v_4,v_6$ are the vertices of $K$ without loops. As $\{v_1,v_3,v_5\}\in st(K)$ it follows that $\rho(K)=\rho(\mathcal{R}_{\{v_1,v_3,v_5\}}(K))$.

By employing the region $\mathcal{BW}_\Gamma(K)$ we can estimate  $\rho(K)\leq 3$. However, using $\mathcal{BW}_\Gamma(\mathcal{R}_{\{v_1,v_3,v_5\}}(K))$ our estimate improves to $\rho(K)\leq 2$ (see the top left and right of figure 9).

It should be noted that for a given graph there is often no unique set of vertices without loops which is simultaneously a structural set. Therefore, there may be many ways to reduce a graph such that at each step only vertices without loops are removed ensuring, as above, that the spectral radius is maintained.

\begin{figure}
\begin{tabular}{cc}
    \begin{overpic}[scale=.35,angle=270]{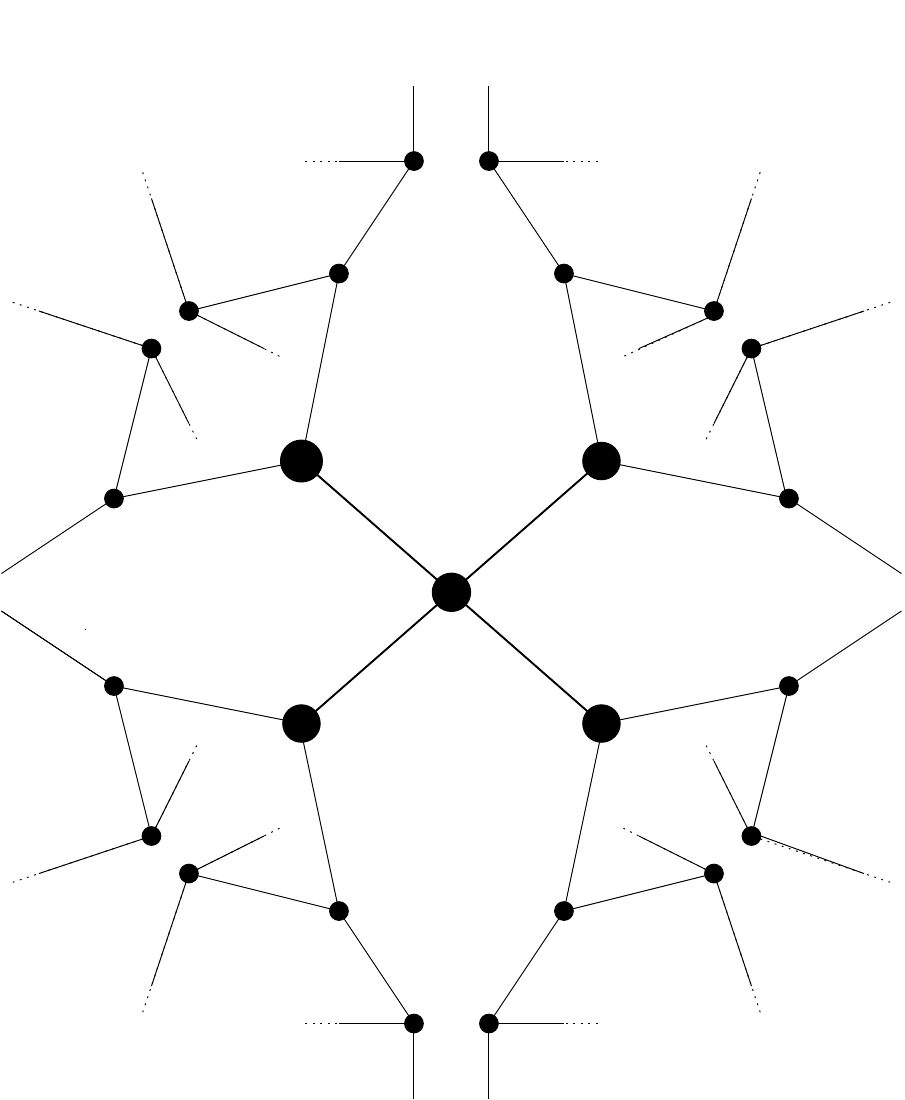}
    \put(58.5,31){$v_5$}
    \put(28.25,50){$v_2$}
    \put(58.5,50){$v_3$}
    \put(28.25,31){$v_4$}
    \put(48.5,40.25){$v_1$}
    \end{overpic} &
    \begin{overpic}[scale=.5]{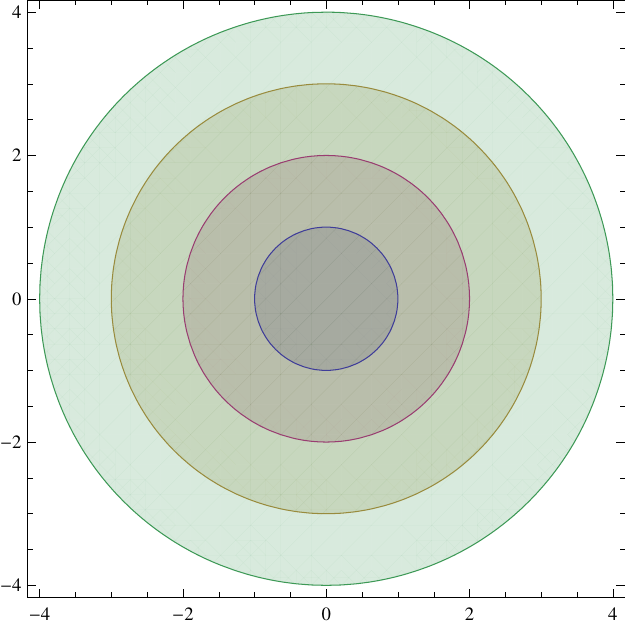}
    \end{overpic}
\end{tabular}
\caption{Left: The graph $G$. Right: $\mathcal{BW}_\Gamma(G)$.}
\end{figure}

\subsection{Targeting Specific Structural Sets}
Here we consider reducing graphs over specific structural sets in order to improve eigenvalue estimates when some structural feature of the graph is known. To do so consider $G=(V,E,\omega)$ where $V=\{v_1,\dots,v_n\}$.

If the sets $\mathcal{BW}_\Gamma(G)_i$ for $1\leq i\leq n$ are known or can be estimated by some structural knowledge of $G$ then it is possible to make decisions on which structural sets to reduce over. That is, it may be possible to identify structural sets $\mathcal{V}\subset V$ such that $v_i\notin\mathcal{V}$ and
$$\partial\mathcal{BW}_\Gamma(G)_i\nsubseteq\bigcup_{j\neq i}\mathcal{BW}_\Gamma(G)_j.$$
If this can be done, theorem \ref{theorem8} implies that a strictly better estimate of $\sigma(G)$ can be achieved by reducing over $\mathcal{V}$.

For example consider the graph $G=(V,E,\omega)$ in the left hand side of figure 10 where $V=\{v_1,\dots,v_n\}$ for some $n>5$. If it is known for instance that $G$ is a simple graph such that $d(v_1)=4$, $d(v_2)=d(v_3)=d(v_4)=d(v_5)=3$ and $d(v_i)\in\{0,1,2,3\}$ for all $6\leq i\leq n$ then the sets $\mathcal{BW}_\Gamma(G)_i$ are each discs of radius either 0,1,2,3 or 4 (see right hand side of figure 10). Moreover, as
$$\partial\mathcal{BW}_\Gamma(G)_1\nsubseteq\bigcup_{i=2}^n\mathcal{BW}_\Gamma(G)_i=\{\lambda\in\mathbb{C}:|\lambda|=4\}$$ then theorem \ref{theorem8} implies that $\mathcal{R}_{V\setminus\{v_1\}}(G)$ has a strictly smaller Gershgorin-type region than does $G$ which can be seen in figure 11. Considering the fact that $n$ may be quite large this example is intended to illustrate that eigenvalues estimates can be improved with a minimal amount of effort if some simple structural feature(s) of the graph are known.

However, it should be noted that as a graph is reduced its weights can contain increasingly larger powers of $\lambda$. Hence, the more a graph is reduced the more complicated it can become to compute the eigenvalue regions associated to it. Fortunately, there is a fairly simple bound for how large these powers of $\lambda$ can become.

\begin{figure}
\begin{tabular}{cc}
    \begin{overpic}[scale=.35,angle=270]{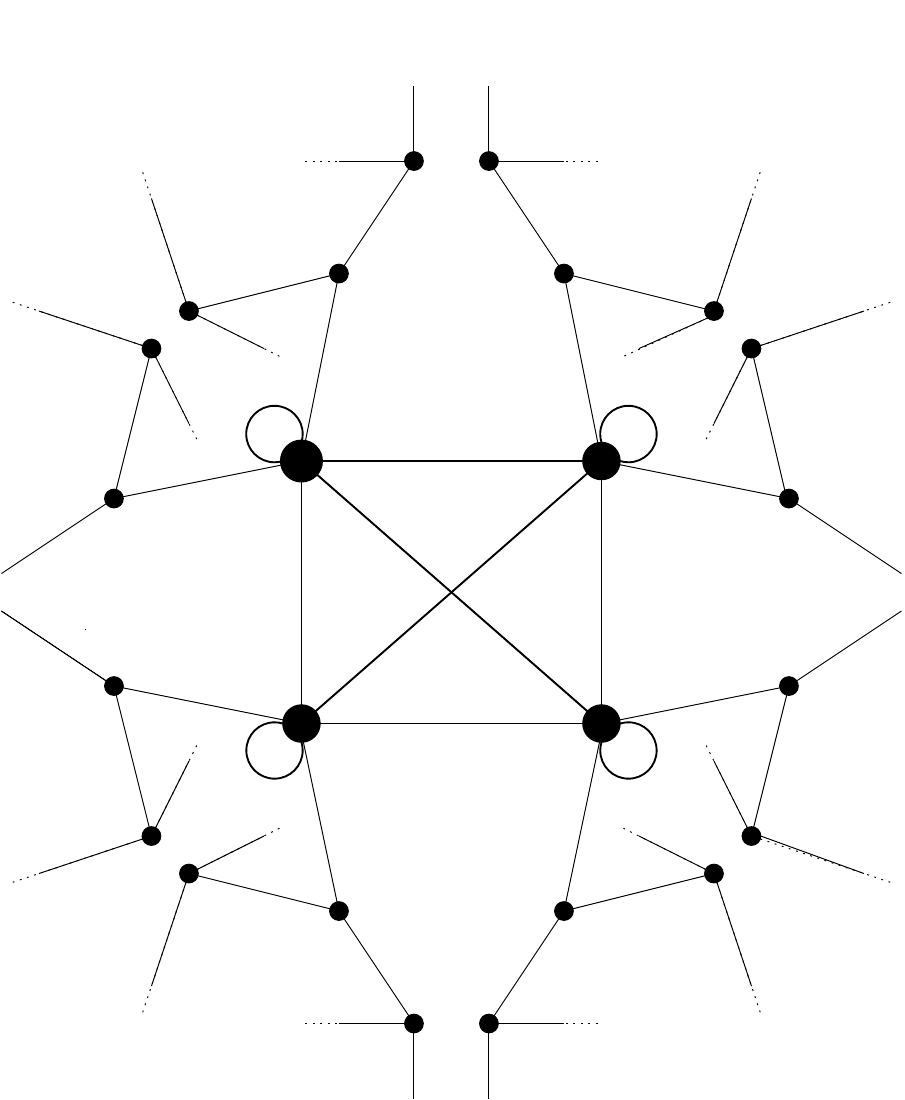}
    \put(25,59){$\frac{1}{\lambda}$}
    \put(44,57.5){$\frac{1}{\lambda}$}
    \put(63.25,59){$\frac{1}{\lambda}$}

    \put(28,42){$\frac{1}{\lambda}$}
    \put(39,39){$\frac{1}{\lambda}$}
    \put(49.5,39){$\frac{1}{\lambda}$}
    \put(58.5,42){$\frac{1}{\lambda}$}

    \put(25,21){$\frac{1}{\lambda}$}
    \put(44,22.5){$\frac{1}{\lambda}$}
    \put(63.25,21){$\frac{1}{\lambda}$}
    \put(58.5,31){$v_5$}
    \put(28.25,50){$v_2$}
    \put(58.5,50){$v_3$}
    \put(28.25,31){$v_4$}
    \end{overpic} &
    \begin{overpic}[scale=.5]{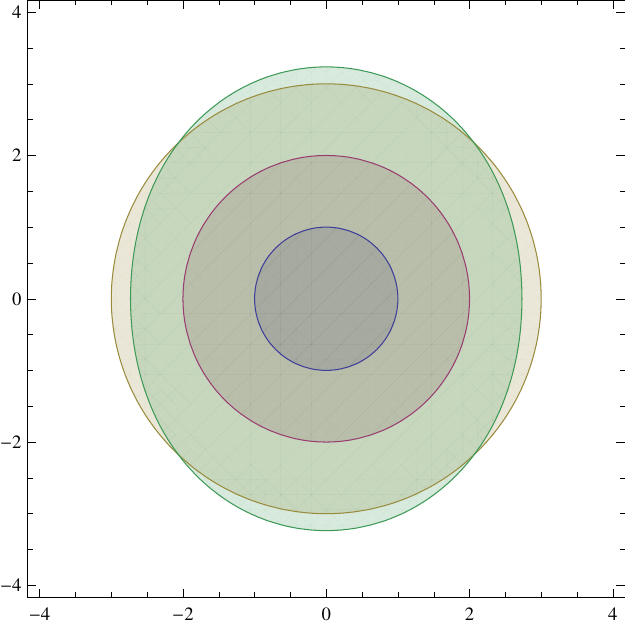}
    \end{overpic}
\end{tabular}
\caption{Left: $\mathcal{R}_{V\setminus\{v_1\}}(G)$. Right: $\mathcal{BW}_\Gamma(\mathcal{R}_{V\setminus\{v_1\}}(G))$.}
\end{figure}

Indeed, let $G=(V,E,\omega)$ such that $M(G)\in\mathbb{C}^{n\times n}$. For $\mathcal{V}\subset V$ let the entries $M\big(\mathcal{R}_{\mathcal{V}}[G]\big)_{ij}=p_{ij}/q_{ij}$
where $p_{ij},q_{ij}\in\mathbb{C}[\lambda]$. Then $$deg(p_{ij})\leq deg(q_{ij})\leq |\bar{\mathcal{V}}|<n.$$ For instance, $\mathcal{G}_1$ and $\mathcal{G}_2$ given in (\ref{last}) are examples of graphs that have been reduced from 5 to 3 and 2 vertices respectively. Hence, the largest power that $\lambda$ can be raised to in any entry of either $M(\mathcal{G}_1)$ or $M(\mathcal{G}_2)$ is 2 or 3 respectively. In fact, the largest power of $\lambda$ in $M(\mathcal{G}_2)$ is only 2.

That is, the Gershgorin, Brauer, and Brualdi-type regions become computationally harder to compute as a graph (equivalently matrix) is reduced but only marginally so. Moreover, this is offset to some degree by the fact that there are less Gershgorin and Brauer and often Brualdi type regions to compute for reduced graphs (matrices).

\section{Concluding Remarks}
The major goal of this paper is to demonstrate that isospectral graph reductions can be used to improve each of the classical eigenvalue estimates of Gershgorin, Brauer, Brualdi, and the more recent extension of Brualdi's theorem by Varga. Of major importance is the fact that these graph reductions are general enough that this process can be applied to any graph with complex valued weights (or equivalently matrices with complex valued entries). Hence, the aforementioned eigenvalue estimates of all matrices in $\mathbb{C}^{n\times n}$ can be improved via our process of isospectral graph reduction. Additionally, this process is sufficiently flexible to improve such eigenvalue estimates to whatever degree is desired.

Aside from this, the associated matrix reductions do not seem to require much computational effort. In fact, it may even be the case that our reduction method is sometimes computationally more feasible than standard methods of computing spectral properties. With regard to such questions, the computational complexity of our approach and its potential for computational improvements in calculating eigenvalues will be addressed in future publications.

\section{Acknowledgments}
This work was partially supported by the NSF grant DMS-0900945 and the Humboldt Foundation.

\end{document}